\documentclass[a4paper]{amsart}

\usepackage{mathrsfs}
\usepackage{amsfonts}
\usepackage{amssymb}
\usepackage{amsxtra}
\usepackage{dsfont}
\usepackage{graphicx}
\usepackage{lmodern}

\usepackage[colorlinks,citecolor=blue,urlcolor=blue,bookmarks=true]{hyperref}
\hypersetup{
pdfpagemode=UseNone,
pdfstartview=FitH,
pdfdisplaydoctitle=true,
pdfborder={0 0 0}, % No link borders
pdftitle={Littlewood--Paley theory for triangle buildings},
pdfauthor={Tim Steger and Bartosz Trojan},
pdflang=en-GB
}

\usepackage[english,polish]{babel}

\newcommand{\defn}[1]{\emph{#1}}

\newcommand{\sprod}[2]{{\langle #1, #2\rangle}}
\newcommand{\norm}[1]{{\left\lVert #1 \right\rVert}}
\newcommand{\abs}[1]{{\lvert {#1} \rvert}}

\newcommand{\pnorm}[2]{{\norm{{#1}}_{L^{#2} \rightarrow L^{#2}}}}

\newcommand{\ind}[1]{{\mathbf{1}_{{#1}}}}

\newcommand{\lell}[2]{L^{#1}\left(\pi, \ell^{#2}(P) \right)}

\newcommand{\GL}{\operatorname{GL}}
\newcommand{\Aff}{\operatorname{Aff}}

\newcommand{\seq}[2]{\left({#1}: {#2}\right)}

\newcommand{\dpi}{{\: \rm d}\pi}

\newcommand{\EE}{\mathbb{E}}

\newcommand{\RR}{\mathbb{R}}
\newcommand{\QQ}{\mathbb{Q}}
\newcommand{\ZZ}{\mathbb{Z}}
\newcommand{\NN}{\mathbb{N}}
\newcommand{\calF}{\mathcal{F}}
\newcommand{\calS}{\mathcal{S}}
\newcommand{\calO}{\mathcal{O}}

\newcommand{\pl}[1]{\foreignlanguage{polish}{#1}}
\renewcommand{\atop}[2]{\substack{{#1}\\{#2}}}

\theoremstyle{plain}
\newtheorem{theorem}{Theorem}
\newtheorem{proposition}{Proposition}[section]

\newtheorem{lemma}[proposition]{Lemma}
\theoremstyle{definition}

\theoremstyle{remark}

\numberwithin{equation}{section}

% theorems numbering in the introduction
\newcounter{thm}

\theoremstyle{plain}
\newtheorem{main_theorem}[thm]{Theorem}

\usepackage[margin=3cm, centering]{geometry}

\begin{document}
\selectlanguage{english}

\title[Littlewood--Paley theory for buildings]
	{Littlewood--Paley theory for triangle buildings}

\author{Tim Steger}
\address{Tim Steger\\
		 Matematica\\
		 Universit\`a degli Studi di Sassari\\
		 Via Piandanna 4\\
		 07100 Sassari\\
		 Italy}
\email{steger@uniss.it}

\author{Bartosz Trojan}
\address{
        \pl{
        Bartosz Trojan\\
        Wydzia\l{} Matematyki,
        Politechnika Wroc\l{}awska\\
        Wyb. Wyspia\'{n}skiego 27\\
        50-370 Wroc\l{}aw\\
        Poland}
}
\email{bartosz.trojan@pwr.edu.pl}

\begin{abstract}
For the natural two parameter filtration
$\seq{\mathcal{F}_\lambda}{\lambda \in P}$ on the boundary of a
triangle building we define a maximal function and a square function
and show their boundedness on $L^p(\Omega_0)$ for $p \in (1, \infty)$.
At the end we consider $L^p(\Omega_0)$ boundedness of martingale
transforms. If the building is of $\GL(3, \QQ_p)$ then $\Omega_0$ can be
identified with $p$-adic Heisenberg group.
\end{abstract}

\keywords{affine building, Littlewood--Paley theory, square function,
  maximal function, multi-index filtration, Heisenberg group, p-adic numbers}

\subjclass[2010]{Primary: 22E35, 51E24, 60G42}

\maketitle

\section{Introduction}
Let $(\Omega, \calF, \pi)$ be a $\sigma$-finite measure space. A sequence of $\sigma$-algebras $(\calF_n : n \in \ZZ)$
is a filtration if $\calF_n \subset \calF_{n+1}$. Given $f$ a locally integrable function on $\Omega$ by
$\EE[f | \calF_n]$ we denote its conditional expectation value with respect to $\calF_n$. Let $M^*$ and $S$ denote
the maximal function and the square function defined by
\[
	M^* f = \sup_{n \in \ZZ} \abs{f_n},
\]
and
\begin{equation}
	\label{eq:32}
	S f = \Big(\sum_{n \in \ZZ} \abs{d_n f}^2 \Big)^{1/2},
\end{equation}
where $d_n f = f_n - f_{n-1}$. The Hardy and Littlewood maximal estimate (see \cite{hl}) implies that
\[
	\pi\big(\big\{ M^* f > \lambda \big\}\big) \leq \lambda^{-1} \int_{M^* f > \lambda} \abs{f} \dpi,
\]
from where it is easy to deduce that for $p \in (1, \infty]$
\[
	\norm{M^* f}_{L^p} \leq \frac{p}{p-1} \norm{f}_{L^p}.
\]
For the square function, if $p \in (1, \infty)$ then there is $C_p > 1$
such that
\begin{equation}
	\label{eq:31}
	C_p^{-1} \norm{f}_{L^p} \leq \norm{S f}_{L^p} \leq C_p \norm{f}_{L^p}.
\end{equation}
The inequality \eqref{eq:31} goes back to Paley \cite{pal}, and has been reproved in many ways, see for example
\cite{burk1, burk2, burk3, gundy2, mar}. Its main application is in proving the
$L^p$-boundedness of martingale transforms (see \cite{burk1}), that is, for operators of the form
\[
	T f = \sum_{n \in \ZZ} a_n d_n f
\]
where $(a_n : n \in \ZZ)$ is a sequence of uniformly bounded functions such that $a_{n+1}$ is $\calF_n$-measurable. 

In 1975, Cairoli and Walsh (see \cite{carwal}) have started to generalize the theory of martingales to two parameter case.
Let us recall that a sequence of $\sigma$-fields $(\calF_{n, m} : n, m \in \ZZ)$ is a two parameter filtration if
\begin{equation}
	\label{eq:34}
	\calF_{n+1, m} \subset \calF_{n, m}, \qquad \text{and} \qquad
	\calF_{n, m+1} \subset \calF_{n, m}.
\end{equation}
Then $(f_{n, m} : n, m \in \ZZ)$ is a two parameter martingale if
\begin{equation}
	\label{eq:33}
	\EE[f_{n+1, m} | \calF_{n, m}] = f_{n, m}, \qquad \text{and} \qquad
	\EE[f_{n, m+1} | \calF_{n, m}] = f_{n, m}.
\end{equation}
Observe that conditions \eqref{eq:34} and \eqref{eq:33} impose a structure only for comparable indices. 
In that generality, it is hard, if not impossible, to build the Littlewood--Paley theory. This lead to the introduction
of other (smaller) classes of martingales (see \cite{zak, wongzak}). In particular, in \cite{carwal}, Cairoli and Walsh
introduced the following condition
\begin{equation}
	\label{f4}
	\tag{$F_4$}
	\EE[f | \calF_{n, \infty} | \calF_{\infty, m} ] = 
	\EE[f | \calF_{\infty,m} | \calF_{n, \infty} ] = f_{n, m}
\end{equation}
where
\[
	\calF_{n, \infty} = \sigma\Big(\bigcup_{m \in \ZZ} \calF_{n ,m} \Big),
	\qquad \text{and} \qquad
	\calF_{\infty, m} = \sigma\Big(\bigcup_{n \in \ZZ} \calF_{n, m} \Big).
\]
Under \eqref{f4}, the result obtained by Jensen, Marcinkiewicz and Zygmund in \cite{jmz} implies that
the maximal function
\begin{equation}
	\label{eq:35}
	M^* f = \sup_{n, m \in \ZZ} \abs{f_{n, m}}
\end{equation}
is bounded on $L^p(\Omega)$ for $p \in (1, \infty]$. In this context the square function is defined by
\begin{equation}
	\label{eq:36}
	S f = \Big(\sum_{n, m \in \ZZ} \abs{d_{n, m} f}^2 \Big)^{1/2}
\end{equation}
where $d_{n, m}$ denote the double difference operator, i.e.
\[
	d_{n, m} f = f_{n, m} - f_{n-1, m} - f_{n, m-1} + f_{n-1, m-1}.
\]
In \cite{mat}, it was observed by Metraux that the boundedness of $S$ on $L^p(\Omega)$ for $p \in (1, \infty)$
is implied by the one parameter Littlewood--Paley theory. Also the concept of a martingale transform has a
natural generalization, that is,
\[
	T f = \sum_{n, m \in \ZZ} a_{n, m} d_{n, m} f
\]
where $(a_{n, m} : n,m \in \ZZ)$ is a sequence of uniformly bounded functions such that $a_{n+1, m+1}$ is 
$\calF_{n, m}$-measurable.

In this article we are interested in a case when the condition \eqref{f4} is not satisfied. The simplest example 
may be obtained by considering the Heisenberg group together with the non-isotropic two parameter dilations
\[
	\delta_{s, t} (x, y, z) = (s x, t y, st z).
\]
Since in this setup the dyadic cubes do not posses the same properties as the Euclidean cubes, it is more convenient
to work on the $p$-adic version of the Heisenberg group. We observe that this group can be identified with $\Omega_0$,
a subset of a boundary of the building of $\GL(3, \QQ_p)$ consisting of the points opposite to a given $\omega_0$. The set
$\Omega_0$ has a natural two parameter filtration $(\calF_{n, m} : n, m \in \ZZ)$ (see Section \ref{sec:2} for
details). The maximal function and the square function are defined by \eqref{eq:35} and \eqref{eq:36}, respectively.
The results we obtain are summarized in the following three theorems.
\begin{main_theorem}
	\label{thm:1}
	For each $p \in (1, \infty]$ there is $C_p > 0$ such that for all $f \in L^p\big(\Omega_0\big)$
	\[
		\norm{M^*f}_{L^p} \leq C_p \norm{f}_{L^p}.
	\]
\end{main_theorem}
\begin{main_theorem}
	\label{thm:2}
	For each $p \in (1, \infty)$ there is $C_p > 1$ such that for all $f \in L^p\big(\Omega_0\big)$
	\[
		C_p^{-1} \norm{f}_{L^p} \leq \norm{S f}_{L^p} \leq C_p \norm{f}_{L^p}.
	\]
\end{main_theorem}
\begin{main_theorem}
	\label{thm:3}
	If $(a_{n, m} : n, m \in \ZZ)$ is a sequence of uniformly bounded functions such that $a_{n+1, m+1}$ is
	$\calF_{n, m}$-measurable, then the martingale transform
	\[
		T f = \sum_{n, m \in \ZZ} a_{n, m} d_{n, m} f 
	\]
	is bounded on $L^p\big(\Omega_0\big)$, for all $p \in (1, \infty)$.
\end{main_theorem}
Let us briefly describe methods we use. First, we observe that instead of \eqref{f4} the stochastic basis
satisfies the remarkable identity \eqref{eq:23}. Based on it we show that the following pointwise estimate holds
\begin{equation}
	\label{eq:37}
	M^*(\abs{f}) \leq C \big(L^* R^* L^* R^*(\abs{f}) + R^* L^* R^* L^* (\abs{f}) \big)
\end{equation}
proving the maximal theorem. Thanks to the two parameter Khintchine's inequality, to bound 
the square function $S$, it is enough to show Theorem \ref{thm:3}. To do so, we define a new square function $\calS$
which has a nature similar to the square function used in the presence of \eqref{f4}. Then we adapt the technique
developed by Duoandikoetxea and Rubio de Francia in \cite{DuoRdF} (see Theorem \ref{th:6}). This implies
$L^p$-boundedness of $S$. Since $S$ does not preserve the $L^2$ norm, the lower bound requires an extra argument.
Namely, we view the square function $S$ as an operator with values in $L^p(\ell^2)$ and take its dual. As a consequence
of Theorem \ref{th:6} and the identity \eqref{eq:48} the later is bounded on $L^p$. 

Finally, let us comment on the behavior of the maximal function $M^*$ close to $L^1$. Based on the pointwise estimate
\eqref{eq:37}, in view of \cite{hl}, we conclude that $M^*$ is of weak-type for functions in the Orlicz space
$L (\log L)^3$. To better understand the maximal function $M^*$ we investigate exact behavior close to $L^1$. This
together with weighted estimates is the subject of the forthcoming paper. It is also interesting how to extend
theorems \ref{thm:1}, \ref{thm:2} and \ref{thm:3} to higher rank and other types of affine buildings.
 
\subsection{Notation}
For two quantities $A>0$ and $B>0$ we say that $A \lesssim B$ ($A \gtrsim B$) if there exists 
an absolute constant $C>0$ such that $A\le CB$ ($A\ge CB$).

If $\lambda \in P$ we set $\abs{\lambda} = \max\{ \abs{\lambda_1}, \abs{\lambda_2}\}$.

\section{Triangle buildings}
\label{sec:2}
\subsection{Coxeter complex}
We recall basic facts about the $A_2$ root system and the
$\tilde{A}_2$ Coxeter group.  A general reference is \cite{bour}.  Let
$\mathfrak{a}$ be the hyperplane in $\RR^3$ defined as
\begin{equation*}
\mathfrak{a} = \{(x_1, x_2, x_3) \in \RR^3: x_1 + x_2 + x_3 = 0 \}.
\end{equation*}
We denote by $\{e_1, e_2, e_3\}$ the canonical orthonormal basis of
$\RR^3$ with respect to the standard scalar product
$\sprod{\cdot}{\cdot}$. We set $\alpha_1 = e_2 - e_1$, $\alpha_2 = e_3
- e_2$, $\alpha_0 = e_3 - e_1$ and $I = \{0, 1, 2 \}$. The $A_2$ root
system is defined by
\begin{equation*}
	\Phi = \{\pm\alpha_0, \pm \alpha_1, \pm \alpha_2\}.
\end{equation*}
We choose the base $\{\alpha_1, \alpha_2\}$ of~$\Phi$.  The
corresponding positive roots are $\Phi^+ = \{\alpha_0, \alpha_1,
\alpha_2\}$. Denote by $\{\lambda_1, \lambda_2\}$ the basis dual to
$\{\alpha_1, \alpha_2\}$; its elements are called the
\defn{fundamental co-weights}. Their integer combinations, form the
\defn{co-weight lattice}~$P$.
\begin{figure}[h]
	\includegraphics{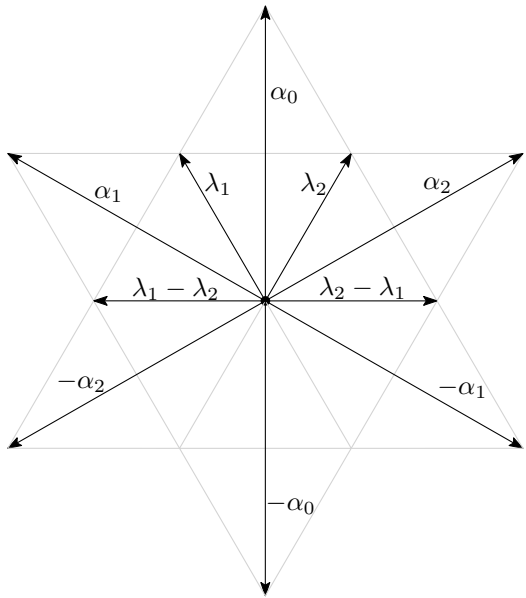}
	\caption{$A_2$ root system}
        \label{fig:1}
\end{figure}
As in \textsc{Figure \ref{fig:1}}, we always draw $\lambda_1$ pointing
up and to the left and $\lambda_2$ up and to the right.  Likewise
$\lambda_1-\lambda_2$ is drawn pointing directly left, while
$\lambda_2-\lambda_1$ points directly right.  Because
$\sprod{\lambda_1}{\alpha_0}=\sprod{\lambda_2}{\alpha_0}=1$,
we see that for any~$\lambda\in P$ the expression
$\sprod{\lambda}{\alpha_0}$ represents the vertical level
of~$\lambda$.  For~$\lambda=i\lambda_1+j\lambda_2$, that level
is~$i+j$.

Let $\mathcal{H}$ be the family of affine hyperplanes, called
\defn{walls},
\begin{equation*}
H_{j; k}=\{x \in \mathfrak{a}: \sprod{x}{\alpha_j} = k\}
\end{equation*}
where $j \in I$, $k \in \ZZ$. To each wall $H_{j; k}$ we associate
$r_{j;k}$ the orthogonal reflection in $\mathfrak{a}$, i.e.
\begin{equation*}
r_{j; k} (x) = x - \big(\sprod{x}{\alpha_j} - k\big) \alpha_j.
\end{equation*}
Set $r_1 = r_{1; 0}$, $r_2 = r_{2; 0}$ and $r_0 = r_{0; 1}$. The
\emph{finite Weyl group} $W_0$ is the subgroup of $\GL(\mathfrak{a})$
generated by $r_1$ and $r_2$. The \emph{affine Weyl group} $W$ is the
subgroup of $\Aff(\mathfrak{a})$ generated by $r_0$, $r_1$ and $r_2$.

Let $\mathcal{C}$ be the family of open connected components of
$\mathfrak{a} \setminus \bigcup_{H \in \mathcal{H}} H$. The elements
of $\mathcal{C}$ are called \emph{chambers}. By $C_0$ we denote the
fundamental chamber, i.e.
\begin{equation*}
	C_0 = \{x \in \mathfrak{a}: \sprod{x}{\alpha_1} > 0,
        \sprod{x}{\alpha_2} > 0, \sprod{x}{\alpha_0} < 1\}.
\end{equation*}
The group $W$ acts simply transitively on $\mathcal{C}$. Moreover,
$\overline{C_0}$ is a fundamental domain for the action of $W$ on
$\mathfrak{a}$ (see e.g. \cite[VI, \S 1-3]{bour}).  The vertices of
$C_0$ are $\{0, \lambda_1, \lambda_2\}$.  The set of all vertices of
all~$C\in\mathcal{C}$ is denoted by~$V(\Sigma)$.  Under the action
of~$W$, $V(\Sigma)$ is made up of three orbits, $W(0)$,
$W(\lambda_1)$, and~$W(\lambda_2)$.  Vertices in the same orbit are
said to have the same~\defn{type}.  Any chamber $C\in\mathcal{C}$ has
one vertex in each orbit or in other words one vertex of each of the
three types.

The family $\mathcal{C}$ may be regarded as a simplicial complex
$\Sigma$ by taking as the simplexes all non-empty subsets of vertices
of~$C$, for all $C \in \mathcal{C}$.  Two chambers $C$ and $C'$ are
\defn{$i$-adjacent} for $i \in I$ if $C = C'$ or if there is $w \in W$
such that $C=wC_0$ and $C'=wr_iC_0$.  Since $r_i^2=1$ this defines an
equivalence relation.

The \defn{fundamental sector} is defined by 
\begin{equation*}
	\mathcal{S}_0 = \{x \in \mathfrak{a}: \sprod{x}{\alpha_1} > 0, \sprod{x}{\alpha_2} > 0\}.
\end{equation*}
Given $\lambda \in P$ and $w \in W_0$ the set $\lambda + w
\mathcal{S}_0$ is called a \defn{sector} in $\Sigma$ with \defn{base
  vertex} $\lambda$.  The angle spanned by a sector at its base vertex
is~$\pi/3$.

\subsection{The definition of triangle buildings}
For the theory of affine buildings we refer the reader to \cite{ron}. 
See also the first author's expository paper \cite{st-b}, for an elementary introduction to the $p$-adics,
and to precisely the sort of the buildings which this paper deals with.

A simplicial complex
$\mathscr{X}$ is an \defn{$\tilde A_2$~building}, or as we like to call it, a
\defn{triangle building}, if each of its vertices is assigned one of the three types, and if
it contains a family of subcomplexes called {\em apartments} such that
\begin{enumerate}
  \item each apartment is type-isomorphic to~$\Sigma$,
  \item any two simplexes of $\mathscr{X}$ lie in a common apartment,
  \item\label{axiom3} for any two apartments $\mathscr{A}$ and
    $\mathscr{A}'$ having a chamber in common there is a
    type-preserving isomorphism $\psi: \mathscr{A} \rightarrow
    \mathscr{A}'$ fixing $\mathscr{A} \cap \mathscr{A}'$ pointwise.
\end{enumerate}
We assume also that the system of apartments is \defn{complete},
meaning that any subcomplex of~$\mathscr{X}$ type-isomorphic
to~$\Sigma$ is an apartment.  A simplex~$C$ is a \defn{chamber}
in~$\mathscr{X}$ if it is a chamber for some apartment. Two chambers
of~$\mathscr{X}$ are \defn{$i$-adjacent} if they are $i$-adjacent in
some apartment. For $i\in I$ and for a chamber~$C$ of~$\mathscr{X}$
let $q_i(C)$ be equal to
\begin{equation*}
	q_i(C) = \abs{\{C' \in \mathscr{X}: C' \sim_i C\}} - 1.
\end{equation*}
It may be proved that $q_i(C)$ is independent of~$C$ and
of~$i$. Denote the common value by~$q$, and assume local finiteness:
$q<\infty$.  Any \emph{edge} of $\mathscr{X}$, i.e., any $1$-simplex,
is contained in precisely~$q+1$ chambers.

It follows from the axioms that the ball of radius one about any
vertex~$x$ of~$\mathscr{X}$ is made up of~$x$ itself, which is of one
type, $q^2+q+1$~vertices of a second type, and a further~$q^2+q+1$
vertices of the third type.  Moreover, adjacency between vertices of
the second and third types makes them into, respectively, the points
and the lines of a finite projective plane.

A subcomplex $\mathscr{S}$
is called a \defn{sector} of~$\mathscr{X}$ if it is a sector in some
apartment.  Two sectors are called \defn{equivalent} if they contain a
common subsector.  Let~$\Omega$ denote the set of equivalence classes
of sectors.  If~$x$ is a vertex of~$\mathscr{X}$ and
$\omega\in\Omega$, there is a unique sector denoted $[x,\omega]$ which
has base vertex~$x$ and represents~$\omega$.

Given any two points $\omega$ and~$\omega' \in \Omega$, one can find
two sectors representing them which lie in a common apartment. If that
apartment is unique, we say that~$\omega$ and~$\omega'$ are
\defn{opposite}, and denote the unique apartment by
$[\omega,\omega']$.  In fact $\omega$ and~$\omega'$ are opposite
precisely when the two sectors in the common apartment point in
opposite directions in the Euclidean sense.

\subsection{Filtrations}
We fix once and for all an origin vertex~$O \in \mathscr{X}$ and a
point~$\omega_0 \in \Omega$.  Choose~$O$ so that it has the same type
as the origin of~$\Sigma$. Let $\mathscr{S}_0=[O,\omega_0]$ be the
sector representing~$\omega_0$ with base vertex~$O$. By~$\Omega_0$ we
denote the subset of $\Omega$ consisting of $\omega$'s opposite to
$\omega_0$.  For purposes of motivation only, we recall that if
$\mathscr{X}$ is the building of $\GL(3,\QQ_p)$, then $\Omega_0$ can
be identified with the $p$-adic Heisenberg group (see Appendix \ref{ap:1} for details).

Let $\mathscr{A}_0$ be any apartment containing $\mathscr{S}_0$.  By
$\psi$ we denote the type-preserving isomorphism between $\mathscr{A}_0$ and $\Sigma$
such that $\psi(\mathscr{S}_0) = -S_0$. We set $\rho = \psi \circ
\rho_0$ where $\rho_0$ is the retraction from $\mathscr{X}$ to
$\mathscr{A}_0$.  With these definitions, $\rho:\mathscr{X}\to\Sigma$
is a type-preserving simplicial map, and for any~$\omega\in\Omega_0$
the apartment~$[\omega,\omega_0]$ maps bijectively to~$\Sigma$
with~$\omega_0$ mapping to the bottom (of \textsc{Figure~\ref{fig:1}})
and $\omega$ mapping to the top.

For any vertex~$x$ of~$\mathscr{X}$ define the subset $E_x \subset
\Omega_0$ to consist of all~$\omega$'s such that $x$~belongs to
$[\omega, \omega_0]$; an equivalent condition is that
$[x,\omega_0]\subseteq[\omega,\omega_0]$.  Fix $\lambda \in P$.  By
$\mathcal{F}_\lambda$ we denote the $\sigma$-field generated by sets
$E_x$ for $x \in \mathscr{X}$ with $\rho(x) = \lambda$. There are
countably many such~$x$, and the corresponding sets~$E_x$ are mutually
disjoint, hence $\mathcal{F}_\lambda$ is a countably generated atomic
$\sigma$-field.

Let $\preceq$ denote the partial order on $P$ where $\lambda \preceq
\mu$ if and only if $\sprod{\lambda - \mu}{\alpha_1} \leq 0$ and
$\sprod{\lambda - \mu}{\alpha_2} \leq 0$.  If we draw and
orient~$\Sigma$ as in \textsc{Figure \ref{fig:1}}, then $\lambda
\preceq \mu$ exactly when $\mu$~lies in the sector pointing
upwards from~$\lambda$.

\begin{proposition}
\label{prop:0}
If $\lambda \preceq \mu$ then $\mathcal{F}_\lambda \subset
\mathcal{F}_\mu$.
\end{proposition}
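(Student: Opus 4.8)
The plan is to reduce everything to the single-step case and then to the defining geometric condition on the sets $E_x$. First I would note that it suffices to prove the inclusion when $\mu$ covers $\lambda$ in the order $\preceq$, i.e.\ when $\mu = \lambda + \lambda_1$ or $\mu = \lambda + \lambda_2$ (these are the two ``elementary moves'' upward in \textsc{Figure~\ref{fig:1}}), since the general case follows by composing finitely many such steps along a staircase path from $\lambda$ to $\mu$ inside the sector pointing upward from~$\lambda$. So fix such a $\mu$.

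Since $\mathcal{F}_\lambda$ is generated by the atoms $E_x$ with $\rho(x) = \lambda$ and $\mathcal{F}_\mu$ by the atoms $E_y$ with $\rho(y) = \mu$, the inclusion $\mathcal{F}_\lambda \subset \mathcal{F}_\mu$ amounts to showing that each $E_x$ is (up to a null set, or in fact exactly) a countable disjoint union of sets $E_y$ with $\rho(y) = \mu$. The key observation is the characterization $E_x = \{\omega \in \Omega_0 : [x,\omega_0] \subseteq [\omega,\omega_0]\}$. Here is where I would do the geometry: if $\rho(y) = \mu$ and $\lambda \preceq \mu$, then I claim $x \in [y,\omega_0]$ for a suitable vertex $x$ with $\rho(x) = \lambda$ — namely, working inside any apartment containing both $[y,\omega_0]$ and the sector direction $\omega_0$, the retraction $\rho$ sends $[y,\omega_0]$ isomorphically onto the sector $[\mu, -\mathcal{S}_0]$ in $\Sigma$ pointing down toward the bottom, and since $\lambda$ lies in that downward sector from $\mu$ (which is exactly the condition $\lambda \preceq \mu$, reading Figure~\ref{fig:1}), there is a unique vertex $x$ on $[y,\omega_0]$ with $\rho(x) = \lambda$. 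For that $x$ we get $[x,\omega_0] \subseteq [y,\omega_0]$, hence $E_y \subseteq E_x$. Conversely, every $\omega \in E_x$ has $x \in [\omega,\omega_0]$, and then the vertex $y$ of $[\omega,\omega_0]$ lying ``above $x$ at level $\mu$'' (i.e.\ with $\rho(y) = \mu$, which exists and is unique because $[\omega,\omega_0] \xrightarrow{\rho} \Sigma$ is a bijection) satisfies $x \in [y,\omega_0]$, so $\omega \in E_y$. This shows $E_x = \bigcup_{y : \rho(y) = \mu,\ E_y \subseteq E_x} E_y$, a disjoint union over atoms of $\mathcal{F}_\mu$, which gives $E_x \in \mathcal{F}_\mu$ and hence the inclusion.

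The main obstacle is the geometric lemma that the retraction $\rho$ restricted to a sector of the form $[y,\omega_0]$ is an isomorphism onto $[\rho(y), -\mathcal{S}_0]$, together with the precise matching between ``lies in the downward sector from $\mu$ in $\Sigma$'' and the order $\lambda \preceq \mu$. The latter is really just the remark already made before the proposition — that $\lambda \preceq \mu$ means $\mu$ lies in the upward sector from $\lambda$, equivalently $\lambda$ lies in the downward sector from $\mu$. The former rests on the building axiom \eqref{axiom3} and the defining property of $\rho = \psi \circ \rho_0$: for $\omega \in \Omega_0$, the apartment $[\omega,\omega_0]$ contains $\mathscr{S}_0$ as a subsector (after passing to equivalent subsectors), $\rho_0$ fixes it, and $\psi$ sends it to $-\mathcal{S}_0$, so $\rho$ maps $[\omega,\omega_0]$ bijectively onto $\Sigma$ with $\omega_0$ at the bottom; restricting to the subsector $[x,\omega_0]$ of $[\omega,\omega_0]$, which points toward $\omega_0$, gives a downward-pointing sector in $\Sigma$ with base $\rho(x)$. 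I would isolate this as a short preliminary remark (or cite it from the building references \cite{ron, st-b}) and then the proof of the proposition is the two-line set-theoretic argument above.
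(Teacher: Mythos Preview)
Your proof is correct and follows essentially the same approach as the paper's: show that each atom $E_y$ at level~$\mu$ is contained in a unique atom $E_x$ at level~$\lambda$ by locating~$x$ inside the sector $[y,\omega_0]$, and conclude that atoms of~$\mathcal{F}_\lambda$ are disjoint unions of atoms of~$\mathcal{F}_\mu$. The paper does this directly for arbitrary $\lambda\preceq\mu$ without your preliminary reduction to the covering case $\mu=\lambda+\lambda_i$, and leaves the ``conversely'' part (that the $E_y$ actually cover~$E_x$) implicit; but otherwise the arguments coincide, with only the roles of the letters $x$ and~$y$ swapped.
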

\begin{proof}
Choose any vertex~$x$ so that $\rho(x)=\mu$. Because $\lambda \preceq
\mu$, there is a unique vertex~$y$ in the sector $[x,\omega_0]$ so
that $\rho(y)=\lambda$.  For any $\omega\in E_x$, the apartment
$[\omega,\omega_0]$ contains~$x$, hence it contains $[x,\omega_0]$,
hence it contains~$y$.  This establishes that $E_x\subseteq E_y$.  In
other words, each atom of~$\mathcal{F}_\mu$ is a subset of some atom
of~$\mathcal{F}_\lambda$.  Hence each atom of~$\mathcal{F}_\lambda$ is
a disjoint union of atoms of~$\mathcal{F}_\mu$.
\end{proof}
In fact, Proposition \ref{prop:0} says that $\seq{\mathcal{F}_\lambda}{\lambda\in P}
=\seq{\mathcal{F}_{i\lambda_1+j\lambda_2}}{i,j\in\ZZ}$ is a two
parameter filtration.  Let
\begin{equation*}
  \mathcal{F} = \sigma \Big(\bigcup_{\lambda \in P} \mathcal{F}_\lambda \Big).
\end{equation*}
Let $\pi$ denote the unique $\sigma$-additive measure on $(\Omega_0,
\mathcal{F})$ such that for $E_x \in \mathcal{F}_\lambda$
\begin{equation*}
	\pi(E_x) = q^{-2\sprod{\lambda}{\alpha_0}}.
\end{equation*}
All $\sigma$-fields in this paper should be extended so as to include
$\pi$-null sets.

A function $f(\omega)$ on~$\Omega_0$ is
$\mathcal{F}_\lambda$-measurable if it depends only on that part of
the apartment~$[\omega,\omega_0]$ which retracts under~$\rho$ to the
sector pointing downwards from~$\lambda$.  For $i, j \in \ZZ$ set
\begin{align*}
&\mathcal{F}_{i, \infty} = \sigma\Big(\bigcup_{j' \in \ZZ}
\mathcal{F}_{i \lambda_1 + j' \lambda_2}\Big), &
&\mathcal{F}_{\infty, j} = \sigma\Big(\bigcup_{i' \in \ZZ}
\mathcal{F}_{i'\lambda_1 + j \lambda_2}\Big).
\end{align*}
A function $f(\omega)$ on~$\Omega_0$ is
$\mathcal{F}_{i,\infty}$-measurable (respectively
$\mathcal{F}_{\infty,j}$-measurable) if it depends only on that part
of the apartment which retracts to a certain ``lower'' half-plane with
boundary parallel to~$\lambda_2$ (respectively~$\lambda_1$).

If $\mathcal{F}'$ is $\sigma$-subfield of $\mathcal{F}$, we denote by
$\EE[f | \mathcal{F}']$ the Radon--Nikodym derivative with respect to
$\mathcal{F}'$. If $\mathcal{F}''$ is another $\sigma$-subfield of
$\mathcal{F}$ we write
\begin{equation*}
	\EE[f | \mathcal{F}' | \mathcal{F}'']
	  = \EE\big[\EE[f | \mathcal{F}'  ]\big| \mathcal{F}''\big].
\end{equation*}
The $\sigma$-field generated by $\mathcal{F}' \cup \mathcal{F}''$ is
denoted by $\mathcal{F}' \vee \mathcal{F}''$. We write $f_\lambda =
\EE_\lambda f = \EE[f | \mathcal{F}_\lambda]$ for $\lambda \in P$.
If $\lambda \preceq \mu$, then it follows from
Proposition~\ref{prop:0} that
$\EE_\mu\EE_\lambda=\EE_\lambda\EE_\mu=\EE_\lambda$.

We note that the Cairoli--Walsh condition \eqref{f4} introduced in \cite{carwal} is
not satisfied, i.e.
\begin{equation*}
	\EE_{\lambda+\lambda_1} \EE_{\lambda+\lambda_2} \neq \EE_\lambda.
\end{equation*}
Instead of \eqref{f4} we have
\begin{lemma}
\label{lem:1}
For a locally integrable function $f$ on $\Omega_0$
\begin{gather}
\label{eq:1}
\EE[f_{\lambda+\lambda_1} | \mathcal{F}_{\lambda+\lambda_2} |
  \mathcal{F}_{\lambda+\lambda_1}] =q^{-1} f_{\lambda+\lambda_1}
-q^{-1} \EE[f_{\lambda+\lambda_1} |
  \mathcal{F}_{\lambda+\lambda_1-\lambda_2} \vee \mathcal{F}_\lambda]
+f_\lambda,\\
\label{eq:23}
\big(\EE_{\lambda+\lambda_2} \EE_{\lambda+\lambda_1}\big)^2 = q^{-1}
\EE_{\lambda+\lambda_2} \EE_{\lambda+\lambda_1} + (1-q^{-1})
\EE_\lambda,
\end{gather}
and likewise if we exchange~$\lambda_1$ and~$\lambda_2$.
\end{lemma}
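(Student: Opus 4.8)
The plan is to reduce everything to a single atom of $\mathcal{F}_\lambda$, build a combinatorial model for the atoms inside it coming from the finite projective plane in the link, prove \eqref{eq:1} by an explicit kernel computation in that model, and then deduce \eqref{eq:23} from \eqref{eq:1} by applying one more conditional expectation.

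\medskip
\noindent\textbf{Reduction.}
Only the conditional expectations onto $\mathcal{F}_\lambda$, $\mathcal{F}_{\lambda+\lambda_1}$, $\mathcal{F}_{\lambda+\lambda_2}$ and $\mathcal{F}_{\lambda+\lambda_1-\lambda_2}\vee\mathcal{F}_\lambda$ occur in the statement, and all of these sit between $\mathcal{F}_\lambda$ and $\mathcal{F}_{\lambda+\lambda_1}\vee\mathcal{F}_{\lambda+\lambda_2}$. Since $\mathcal{F}_\lambda\subset\mathcal{F}_{\lambda+\lambda_i}$, every operator in sight preserves the subspace of functions supported on a fixed atom $A=E_x$ with $\rho(x)=\lambda$, so it is enough to prove \eqref{eq:1} and \eqref{eq:23} after restricting to such an $A$. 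There they become finite-dimensional identities among the orthogonal projections $\EE_{\lambda+\lambda_1}$, $\EE_{\lambda+\lambda_2}$, $\EE_\lambda$ and $\EE[\,\cdot\mid\mathcal{F}_{\lambda+\lambda_1-\lambda_2}\vee\mathcal{F}_\lambda]$ acting on the space of functions on $A$ that are measurable with respect to $\mathcal{F}_{\lambda+\lambda_1}\vee\mathcal{F}_{\lambda+\lambda_2}$.

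\medskip
\noindent\textbf{The model.}
Using the description of $\mathcal{F}_\mu$-measurability through the part of $[\omega,\omega_0]$ that $\rho$ carries to the sector pointing downwards from $\mu$, together with $\pi(E_y)=q^{-2\sprod{\mu}{\alpha_0}}$ for $\rho(y)=\mu$ and $\sprod{\lambda_1}{\alpha_0}=\sprod{\lambda_2}{\alpha_0}=1$, one first gets that $A$ is a disjoint union of $q^2$ atoms of $\mathcal{F}_{\lambda+\lambda_1}$ and also of $q^2$ atoms of $\mathcal{F}_{\lambda+\lambda_2}$, each of measure $q^{-2}\pi(A)$. The refined incidence pattern is governed by the finite projective plane $\mathbb{P}^2(\mathbb{F}_q)$ occurring in the link of $x$ (the second- and third-type neighbours of $x$): after a choice of coordinates, the atoms of $\mathcal{F}_{\lambda+\lambda_1}$ in $A$ are the points of the affine plane $\mathbb{A}^2(\mathbb{F}_q)$; the atoms of $\mathcal{F}_{\lambda+\lambda_2}$ in $A$ are the $q^2$ affine lines lying outside one fixed parallel class (the ``vertical'' lines); a point atom $B$ and a line atom $C$ satisfy $B\cap C\neq\emptyset$ exactly when the point lies on the line, and then $\pi(B\cap C)=q^{-3}\pi(A)$; finally the atoms of $\mathcal{F}_{\lambda+\lambda_1-\lambda_2}\vee\mathcal{F}_\lambda$ inside $A$ are precisely the $q$ vertical lines. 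By the same analysis the analogous picture holds with $\lambda_1$ and $\lambda_2$ exchanged. Extracting this dictionary from the building axioms — in particular pinning down which $q+1$ directions ``towards $\omega_0$'' are discarded so that the link's projective plane degenerates to the affine plane with a distinguished direction — is the main obstacle; everything afterwards is bookkeeping.

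\medskip
\noindent\textbf{Proof of \eqref{eq:1}.}
Working on $A$ with the model, one computes the kernel of $\EE_{\lambda+\lambda_1}\EE_{\lambda+\lambda_2}\EE_{\lambda+\lambda_1}$ on functions constant on the point atoms. For point atoms $B,B'$ this kernel equals
$\sum_{C\,:\,B,B'\in C}\pi(B\cap C)\pi(B'\cap C)\big/\big(\pi(B)\pi(C)\big)
= q^{-2}\,\#\{C\in\mathcal{F}_{\lambda+\lambda_2}\restriction A : B,B'\in C\}$,
and by the incidence rules $\#\{C:B,B'\in C\}$ is $q$ when $B=B'$, is $0$ when $B\neq B'$ lie on a common vertical line, and is $1$ otherwise (two distinct affine points lie on a unique common line, which belongs to the chosen $q^2$ lines iff it is not vertical). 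Hence the kernel is $q^{-1}$, $0$, $q^{-2}$ in these three cases, i.e. it equals $q^{-1}\,\mathrm{Id}-q^{-1}\Pi+\Pi_0$, where $\Pi$ is averaging over the vertical line of $B$ and $\Pi_0$ is averaging over $A$. Translating back, $\Pi=\EE[\,\cdot\mid\mathcal{F}_{\lambda+\lambda_1-\lambda_2}\vee\mathcal{F}_\lambda]$ and $\Pi_0=\EE_\lambda$, which is exactly \eqref{eq:1}. The $\lambda_1\!\leftrightarrow\!\lambda_2$ version is identical.

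\medskip
\noindent\textbf{Deduction of \eqref{eq:23}.}
Write $\big(\EE_{\lambda+\lambda_2}\EE_{\lambda+\lambda_1}\big)^{2}=\EE_{\lambda+\lambda_2}\big(\EE_{\lambda+\lambda_1}\EE_{\lambda+\lambda_2}\EE_{\lambda+\lambda_1}\big)$ and substitute \eqref{eq:1}; since $\EE_{\lambda+\lambda_2}f_\lambda=f_\lambda$ this gives
\[
\big(\EE_{\lambda+\lambda_2}\EE_{\lambda+\lambda_1}\big)^{2}f
= q^{-1}\,\EE_{\lambda+\lambda_2}\EE_{\lambda+\lambda_1}f
- q^{-1}\,\EE_{\lambda+\lambda_2}\,\EE\big[\,f_{\lambda+\lambda_1}\,\big|\,\mathcal{F}_{\lambda+\lambda_1-\lambda_2}\vee\mathcal{F}_\lambda\,\big]
+ f_\lambda .
\]
So it remains to show that $\EE_{\lambda+\lambda_2}$ sends every $(\mathcal{F}_{\lambda+\lambda_1-\lambda_2}\vee\mathcal{F}_\lambda)$-measurable function to an $\mathcal{F}_\lambda$-measurable one, for then the middle term collapses to $f_\lambda$ and the right-hand side becomes $q^{-1}\EE_{\lambda+\lambda_2}\EE_{\lambda+\lambda_1}f+(1-q^{-1})f_\lambda$. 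In the model this is transparent: such a function is constant on the vertical lines, a line atom $C$ meets each vertical line in exactly one point atom, so $\EE_{\lambda+\lambda_2}$ of it assigns to $C$ the value $q^{-1}\sum_{V}(\text{value on }V)$, which does not depend on $C$ — i.e. it is constant on $A$. (One should note that this step genuinely uses the favourable position of $\mathcal{F}_{\lambda+\lambda_1-\lambda_2}$: the full Cairoli--Walsh identity $\EE_{\lambda+\lambda_1}\EE_{\lambda+\lambda_2}=\EE_\lambda$ fails in the very same model.) This proves \eqref{eq:23}, and the exchanged version follows by the $\lambda_1\!\leftrightarrow\!\lambda_2$ symmetry of the model.
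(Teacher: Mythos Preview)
Your argument is correct and follows essentially the same route as the paper: both localize to a single atom $E_x$ with $\rho(x)=\lambda$, identify the relevant sub-atoms with the incidence structure of the finite projective plane in the link of~$x$, and verify \eqref{eq:1} by an explicit incidence count. Your affine-plane repackaging (points, non-vertical lines, vertical lines) is exactly the paper's decomposition of the $q^2+q+1$ point-type and line-type neighbours into spots of sizes $1$, $q$, $q^2$, just in different language; your kernel computation $K(B,B')=q^{-2}\#\{C:B,B'\in C\}$ reproduces the paper's double sum in \eqref{eq:9}.

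The one genuine difference is in deducing \eqref{eq:23}. The paper applies $\EE_{\lambda+\lambda_2}$ to the penultimate expression in \eqref{eq:9} and counts directly that each line $l\not\sim p_0$ appears $q-1$ times in the resulting double sum. You instead apply $\EE_{\lambda+\lambda_2}$ to the final form of \eqref{eq:1} and observe that $\EE_{\lambda+\lambda_2}$ collapses any $(\mathcal{F}_{\lambda+\lambda_1-\lambda_2}\vee\mathcal{F}_\lambda)$-measurable function to an $\mathcal{F}_\lambda$-measurable one, because every non-vertical line meets each vertical line in exactly one affine point. This is a cleaner way to organize the same count and makes the role of the ``vertical'' $\sigma$-field more transparent. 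Note, though, that your honest caveat (``extracting this dictionary from the building axioms\dots is the main obstacle'') is a real gap in the write-up as it stands; the paper likewise leans on a figure and stated vertex counts rather than a full derivation, so you are at the same level of rigor, but in a final version you should at least record why the spots have sizes $1,q,q^2$ and why $p_0\sim l_0$.
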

\begin{proof}
For the proof of \eqref{eq:1} it is enough to consider $f =
\ind{E_{p_1}}$ where $p_1$ is a vertex in $\mathscr{X}$ such that
$\rho(p_1) = \lambda + \lambda_1$. Let $\mathscr{S}$ be the sector
$[p_1,\omega_0]$ and let~$x$ be the unique vertex of~$\mathscr{S}$
with $\rho(x) = \lambda$. The ball in~$\mathscr{X}$ of radius~$1$
around~$x$ has the structure of a finite projective plane.
	\begin{figure}[h]
		\includegraphics{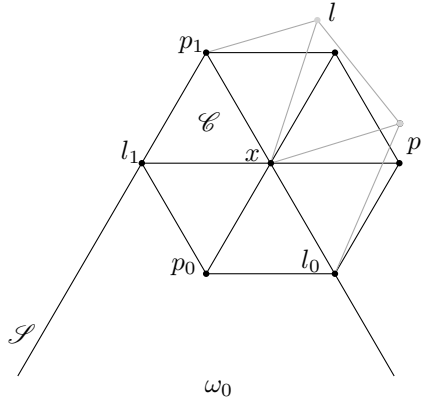}
		\caption{Residue of $x$}
		\label{fig:2}
	\end{figure}
In \textsc{Figure~\ref{fig:2}} the spot marked~$x$ is for vertices
of~$\mathscr{X}$ which retract via~$\rho$ to $\lambda$.  Recall that
$E_x$~is an atom of the $\sigma$-field~$\mathcal{F}_\lambda$. The spot
marked~$p_1$ is for vertices retracting to $\lambda+\lambda_1$; the
spot marked~$l$ is for vertices retracting to~$\lambda+\lambda_2$; the
spot marked~$l_1$ is for vertices retracting to
$\lambda+\lambda_1-\lambda_2$; etc.  In the ball of radius~$1$
around~$x$, only~$x$ itself retracts to the spot marked~$x$.  The line
type vertex known as~$l_0$ is the only vertex in the ball retracting
to its spot; $q$~line type vertices retract to the same spot as~$l_1$;
the remaining~$q^2$ line type vertices retract to the spot
marked~$l$. Likewise, $p_0$ is the unique point type vertex of the
ball retracting to its spot; $q$~point type vertices retract to the
spot marked~$p$; $q^2$~retract to the same spot as~$p_1$.  It follows
that
\begin{equation*}
	\EE[\ind{E_{p_1}} | \mathcal{F}_\lambda] =q^{-2}\ind{E_x}
    =q^{-2}\sum_{p'\not\sim l_0} \ind{E_{p'}}
    =q^{-2}\sum_{l\not\sim p_0} \ind{E_l}
\end{equation*}
and
\begin{equation*}
	\EE[\ind{E_{p_1}} |
    \mathcal{F}_{\lambda+\lambda_1-\lambda_2}\vee
    \mathcal{F}_\lambda] =q^{-1}\ind{E_x\cap E_{l_1}}
	=q^{-1}\sum_{\atop{p'\sim l_1}{p'\not\sim l_0}} \ind{E_{p'}}
\end{equation*}
where $p'$~runs through the point type vertices of the ball, $l$~runs
through the line type vertices of the ball, and $\sim$~stands for the
incidence relation. We have
\begin{equation}
\label{eq:5}
\EE[\ind{E_{p_1}} | \mathcal{F}_{\lambda+\lambda_2}] =
            q^{-1} \sum_{\atop{l \sim p_1}{l \not\sim p_0}}
                \ind{E_l}.
\end{equation}
Therefore, we obtain
\begin{equation}
	\label{eq:9}
	\begin{aligned}
\EE[\ind{E_{p_1}}|\mathcal{F}_{\lambda+\lambda_2}|\mathcal{F}_{\lambda+\lambda_1}]
  = q^{-2} \sum_{\atop{l \sim p_1}{l \not\sim p_0}}
    \sum_{\atop{p' \sim l}{p' \not\sim l_0}} 
       \ind{E_{p'}}
	&=q^{-1} \ind{E_{p_1}}
     + q^{-2}\sum_{\atop{p' \not\sim l_0}{p' \not\sim l_1}} \ind{E_{p'}} \\
 & =q^{-1}\ind{E_{p_1}} + q^{-2} \sum_{p' \not\sim l_0} \ind{E_{p'}} -q^{-2}
\sum_{\atop{p' \sim l_1}{p'\not\sim l_0}} \ind{E_{p'}},
	\end{aligned}
\end{equation}
which finishes the proof of \eqref{eq:1}. Applying one more average to
the next to the last expression of~\eqref{eq:9} we get
\begin{equation*}
\EE[\ind{E_{p_1}} | \mathcal{F}_{\lambda+\lambda_2} |
  \mathcal{F}_{\lambda+\lambda_1} | \mathcal{F}_{\lambda+\lambda_2}] =
q^{-2} \sum_{\atop{l \sim p_1}{l \not\sim p_0}} \ind{E_l} + q^{-3}
\sum_{\atop{p' \not\sim l_0}{p' \not\sim l_1}} \sum_{\atop{l \sim
    p'}{l \not\sim p_0}} \ind{E_l}.
\end{equation*}
For any line $l \not\sim p_0$ there are $q$ points $p'$ such that $p'
\sim l$ and $p' \not\sim l_0$ and among them there is exactly one
incident to $l_1$. Hence in the last sum each line $l \not\sim p_0$
appears $q-1$ times. Thus, we can write
\begin{equation*}
q^{-3} \sum_{\atop{p' \not\sim l_0}{p' \not\sim l_1}} \sum_{\atop{l
    \sim p'}{l \not\sim p_0}} = q^{-3} (q-1) \sum_{l \not \sim p_0}
\ind{E_l} = (1-q^{-1}) \EE[\ind{E_{p_1}} | \mathcal{F}_\lambda]
\end{equation*}
proving \eqref{eq:23}.
\end{proof}
The following lemma describes the composition of projections on the same level.
\begin{lemma}
\label{lem:4}
If $k, j \in \ZZ$ are such that $k \geq j \geq 0$ or $k \leq j \leq 0$ then
\begin{equation}
  \label{eq:6}
  \EE_{\lambda+k(\lambda_2-\lambda_1)} \EE_\lambda =
  \EE_{\lambda+k(\lambda_2-\lambda_1)}
  \EE_{\lambda+j(\lambda_2-\lambda_1)} \EE_\lambda.
\end{equation}
\end{lemma}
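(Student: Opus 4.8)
The plan is to reduce \eqref{eq:6}, by induction on~$k$, to a single ``one step'' identity at each level, then to localise that identity inside a bounded region of~$\mathscr{X}$ and finish with an incidence count of the kind already carried out for Lemma~\ref{lem:1}.

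\emph{Reductions.} Exchanging $\lambda_1$ and $\lambda_2$ turns $k\le j\le 0$ into $k\ge j\ge 0$, so assume $k\ge j\ge 0$; write $\mu_m=\lambda+m(\lambda_2-\lambda_1)$ and $\EE_m=\EE_{\mu_m}$, so that $\mu_0=\lambda,\dots,\mu_k$ lie on one horizontal line and share the level $\sprod{\lambda}{\alpha_0}$. If $j=0$ the identity is vacuous and if $j=k$ it is idempotency $\EE_k^2=\EE_k$, so let $0<j<k$ and assume \eqref{eq:6} for all shifts smaller than~$k$. Then \eqref{eq:6} applied with base vertex~$\mu_j$ and shift $k-j$ gives $\EE_k\EE_{k-1}\EE_{\mu_j}=\EE_k\EE_{\mu_j}$, and applied with base vertex~$\lambda$ and shift $k-1$ gives $\EE_{k-1}\EE_{\mu_j}\EE_\lambda=\EE_{k-1}\EE_\lambda$, whence
\[
\EE_k\EE_{\mu_j}\EE_\lambda=\EE_k\EE_{k-1}\EE_{\mu_j}\EE_\lambda=\EE_k\EE_{k-1}\EE_\lambda .
\]
So it suffices to establish, for each $k\ge 2$, the one step identity $(\star_k)\colon \EE_k\EE_\lambda=\EE_k\EE_{k-1}\EE_\lambda$; for $k=1$ it is trivial.

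\emph{Localisation of $(\star_k)$.} Put $\zeta=\lambda-k\lambda_1$ and $\kappa=\lambda+k\lambda_2$. Checking the inequalities defining~$\preceq$ gives $\zeta\preceq\mu_m\preceq\kappa$ for $0\le m\le k$, so by Proposition~\ref{prop:0} $\mathcal{F}_\zeta\subseteq\mathcal{F}_{\mu_m}\subseteq\mathcal{F}_\kappa$, hence $\EE_m\EE_\zeta=\EE_\zeta\EE_m=\EE_\zeta$ and $\EE_m\EE_\kappa=\EE_\kappa\EE_m=\EE_m$. Thus $\EE_\lambda$, $\EE_{k-1}$ and $\EE_k$ all preserve the orthogonal decomposition
\[
L^2(\Omega_0)=L^2(\mathcal{F}_\zeta)\ \oplus\ \big(L^2(\mathcal{F}_\kappa)\ominus L^2(\mathcal{F}_\zeta)\big)\ \oplus\ L^2(\mathcal{F}_\kappa)^{\perp},
\]
acting as the identity on the first summand and as~$0$ on the third, where $(\star_k)$ is automatic; so one only has to check it on the middle summand. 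That summand splits as an orthogonal sum over the atoms $E_{z_0}$ of~$\mathcal{F}_\zeta$ (since $E_{z_0}\in\mathcal{F}_\zeta\subseteq\mathcal{F}_{\mu_m}$, each $\EE_m$ commutes with multiplication by~$\ind{E_{z_0}}$), and inside a fixed $E_{z_0}$ the field $\mathcal{F}_\kappa$ has only $q^{2\sprod{\kappa-\zeta}{\alpha_0}}=q^{4k}$ atoms. So $(\star_k)$ becomes a finite identity among three projections. Unwinding it on $f=\ind{E_z}$ with $\rho(z)=\lambda$, it asserts that for $\rho(w)=\mu_k$
\[
\sum_{\rho(y)=\mu_{k-1}}\pi(E_z\cap E_y)\,\pi(E_y\cap E_w)=q^{-2\sprod{\lambda}{\alpha_0}}\,\pi(E_z\cap E_w).
\]

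\emph{The combinatorial core.} The point is that for $\omega\in E_z\cap E_w$ the vertex of $[\omega,\omega_0]$ over~$\mu_{k-1}$ is not determined by~$z$ and~$w$: the part of $[\omega,\omega_0]$ that remains free sits over the region of~$\Sigma$ inside the downward sector at~$\mu_{k-1}$ but outside those at $\mu_0=\lambda$ and at~$\mu_k$, which for $j=k-1$ is a bounded strip one chamber wide and of length $k-1$. One must then count, weighted by~$\pi$, the ways of filling this strip, and I would do so by an inner induction on~$k$: peel off the far end of the strip, describe the transition from $\EE_{k-1}$ to~$\EE_k$ there by one application of the quadratic relation~\eqref{eq:23}, and reduce to $(\star_{k-1})$. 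That last step—keeping track of the incidences inside the growing strip and matching the projective plane counts with the powers of~$q$ that appear in~$\pi$—is the real obstacle; everything else is bookkeeping of exactly the kind carried out in the proof of Lemma~\ref{lem:1}.
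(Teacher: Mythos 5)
Your inductive reduction to the one-step identity $(\star_k)\colon \EE_{\mu_k}\EE_\lambda=\EE_{\mu_k}\EE_{\mu_{k-1}}\EE_\lambda$ (with $\mu_m=\lambda+m(\lambda_2-\lambda_1)$), and the localisation to a finite subfield, are logically sound; but they merely relocate the difficulty, and the ``combinatorial core'' you flag as the real obstacle is both unexecuted and based on an incorrect picture. You claim that for $\omega\in E_z\cap E_w$ (with $\rho(z)=\lambda$, $\rho(w)=\mu_k$) the vertex of $[\omega,\omega_0]$ over $\mu_{k-1}$ is \emph{not} determined and that there is a free strip of the apartment left to fill. The opposite is true, and this rigidity is the whole content of the lemma: any apartment containing $z$ and $w$ contains the entire chain $z=x_0,x_1,\dots,x_k=w$ of vertices over $\mu_0,\dots,\mu_k$. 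Hence whenever $E_z\cap E_w$ is nonempty there is a unique $y^\ast=x_{k-1}$ over $\mu_{k-1}$ determined by $z$ and $w$; moreover $y^\ast$ is the only vertex over $\mu_{k-1}$ with both $\pi(E_z\cap E_{y^\ast})>0$ and $\pi(E_{y^\ast}\cap E_w)>0$, since any other such $y$ would produce a second chain from $z$ to $w$. Your displayed sum therefore has a single nonzero term, and the identity drops out of the measure count $\pi(E_z\cap E_y)=q^{-2\sprod{\lambda}{\alpha_0}-m}$ when $z$ and $y$ are joined by a chain of length $m$; there is nothing to count and no inner induction to run. The proposed recourse to \eqref{eq:23} is also off target: that quadratic identity involves projections at two different levels, while $(\star_k)$ involves three projections all on the same level, and there is no evident bridge between them. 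The paper dispenses with your reduction entirely: it constructs the chain, checks that $\rho$ is injective on the assembled subcomplex $\mathscr{B}$, invokes convexity of apartments to establish rigidity of the chain, and then reads off both sides of \eqref{eq:6} at once from $\pi(E_{x_0}\cap E_{x_k})=q^{-2\sprod{\lambda}{\alpha_0}-k}$ together with the fact that there are $q^k$ chains emanating from $x_0$. That rigidity step is the one piece of genuine content; you need to see it rather than work around it.
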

\begin{proof} We do the proof for $k\geq j\geq 0$.
  For any~$\omega\in\Omega_0$, there is a connected chain of vertices
  $\seq{x_i}{0\leq i\leq k}\subseteq[\omega,\omega_0]$ with
  $\rho(x_i)=\lambda+k(\lambda_2-\lambda_1)$.  Suppose, conversely,
  that $\seq{x_i}{0\leq i\leq k}$ is a connected chain of vertices and
  that $\rho(x_i)= \lambda+k(\lambda_2-\lambda_1)$.  Construct a
  subcomplex $\mathscr{B}\subset\mathscr{X}$ by putting together
  $\seq{[x_i,\omega_0]}{0\leq i\leq k}$, the edges between the $x_i$'s
  and the triangles pointing downwards from those edges to~$\omega_0$.
  Referring to \textsc{Figure~\ref{fig:chain}}, the extra triangle
  pointing downward from the first edge has vertices~$x_0$, $x_1$,
  and~$y_0$.  Note that
  $[x_0,\omega_0]\cap[x_1,\omega_0]=[y_0,\omega_0]$.  Proceeding one
  step at a time, one may verify that the restriction of~$\rho$
  to~$\mathscr{B}$ is an injection and that $\mathscr{B}$ and
  $\rho(\mathscr{B})$ are isomorphic complexes.
  
	\begin{figure}[h]
		\includegraphics{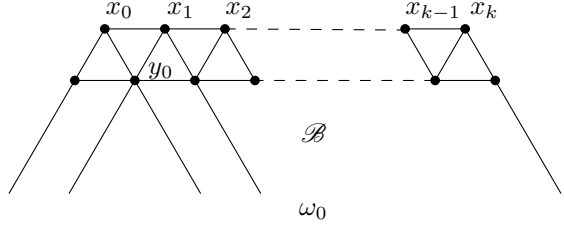}
		\caption{The complex $\mathscr{B}$}
		\label{fig:chain}
	\end{figure}

  By basic properties of affine buildings, one knows it is possible to
  extend~$\mathscr{B}$ to an apartment.  Any such apartment will
  retract bijectively to~$\Sigma$, and will be of the form
  form~$[\omega,\omega_0]$ where~$\omega$ is the equivalence class
  represented by the upward pointing sectors of the apartment.
  Moreover, using the definition of~$\pi$ one may calculate that
  \begin{equation*}
    \pi(\{\omega\in\Omega_0 : 
     \mathscr{B}\subseteq[\omega,\omega_0]\})
     =q^{-2\sprod{\lambda}{\alpha_0}-k} .
  \end{equation*}
  The important point is that the measure of the set depends only on
  the level of~$\lambda$ and the length of the chain.

  Basic properties of affine buildings imply that any apartment
  containing~$x_0$ and~$x_k$ contains the entire chain. Hence
  \begin{equation*}
    \pi(E_{x_0}\cap E_{x_k})
    =\pi(\{\omega\in\Omega_0 : 
     \mathscr{B}\subseteq[\omega,\omega_0]\})
    =q^{-2\sprod{\lambda}{\alpha_0}-k} .
  \end{equation*}
  Fix~$x_0$.  Proceeding one step at a time, one sees there are
  $q^k$~connected chains $\seq{x_i}{0\leq i\leq k}$ with
  $\rho(x_i)=\lambda+k(\lambda_2-\lambda_1)$.  Consequently
  \begin{equation*}
    \EE_{\lambda+k(\lambda_2-\lambda_1)}\ind{x_0}
      =q^{-k}\sum_{\seq{x_i}{0\leq i\leq k}}
      \ind{x_k}.
  \end{equation*}
  Likewise
  \begin{align*}
    \EE_{\lambda+k(\lambda_2-\lambda_1)}
    \EE_{\lambda+j(\lambda_2-\lambda_1)}\ind{x_0}
    & =q^{-j}\EE_{\lambda+k(\lambda_2-\lambda_1)}\sum_{\seq{x_i}{0\leq i\leq j}}
      \ind{x_j} \\
    & =q^{-j}q^{-(k-j)}\sum_{\seq{x_i}{0\leq i\leq j}}
     \sum_{\seq{x_i}{j\leq i\leq k}}
     \ind{x_k},
  \end{align*}
  which is the same thing.
\end{proof}

Consider $\EE_\lambda\EE_\mu$.  If $\lambda\preceq\mu$ then the
product is equal to~$\EE_\lambda$; similarly if~$\mu\preceq\lambda$.
If $\lambda$ and~$\mu$ are incomparable, the following lemma allows us
to reduce to the case where~$\lambda$ and~$\mu$ are on the same level.
%	\begin{figure}[h]
%		\includegraphics{subproj.pdf}
%		\caption{Ilustration for Lemma \ref{lem:3}}
%		\label{fig:3}
%	\end{figure}
\begin{lemma}
  \label{lem:3}
  Suppose $\lambda \in P$ and
  \begin{align*}
    \lambda' &= \lambda - i \lambda_1, &
    \mu &= \lambda' +  k(\lambda_2 - \lambda_1), &
    \tilde{\mu} &= \mu +(\lambda_2-\lambda_1)
  \end{align*}
  for $i, k \in \NN$. Then for any locally integrable function $f$ on
  $\Omega_0$
\begin{align}
  \label{eq:7.1}
  \EE[f | \mathcal{F}_\lambda | \mathcal{F}_\mu] &= \EE[f |
    \mathcal{F}_{\lambda'} | \mathcal{F}_\mu],\\
  \label{eq:7.2}
  \EE[f | \mathcal{F}_\mu | \mathcal{F}_\lambda] &= \EE[f |
    \mathcal{F}_\mu | \mathcal{F}_{\lambda'}],\\
  \label{eq:7.3}
  \EE[f | \mathcal{F}_{\lambda} | \mathcal{F}_\mu \vee
    \mathcal{F}_{\tilde{\mu}}] &= \EE[f | \mathcal{F}_{\lambda'} |
    \mathcal{F}_\mu]\\
  \label{eq:7.4}
  \EE[f | \mathcal{F}_\mu \vee \mathcal{F}_{\tilde{\mu}} |
    \mathcal{F}_\lambda] &= \EE[f | \mathcal{F}_\mu |
    \mathcal{F}_{\lambda'}]
\end{align}
and likewise if we exchange~$\lambda_1$ and~$\lambda_2$.
\end{lemma}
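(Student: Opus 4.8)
The plan is to deduce all four identities from a single combinatorial statement about the measures of intersections of atoms, after clearing away the routine reductions. It is enough to verify each of \eqref{eq:7.1}--\eqref{eq:7.4} on indicator functions $\ind{E_z}$ of the atoms $E_z\in\mathcal{F}$: these have finite measure, every conditional expectation becomes a finite (or dominated) sum, and on such functions $\EE[\cdot\mid\mathcal{F}']$ is self-adjoint, so each identity is equivalent to its formal $L^2$-adjoint. Since $\big(\EE[\cdot\mid\mathcal{F}_\lambda\mid\mathcal{F}_\mu]\big)^{*}=\EE[\cdot\mid\mathcal{F}_\mu\mid\mathcal{F}_\lambda]$, and likewise with $\mathcal{F}_\mu\vee\mathcal{F}_{\tilde\mu}$ in place of $\mathcal{F}_\mu$, taking adjoints shows that \eqref{eq:7.2} is equivalent to \eqref{eq:7.1} and \eqref{eq:7.3} to \eqref{eq:7.4}. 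Moreover the ``exchange $\lambda_1\leftrightarrow\lambda_2$'' statements will follow because nothing in the argument uses a property of $\lambda_1$ not equally shared by $\lambda_2$. Hence it suffices to prove \eqref{eq:7.1} and \eqref{eq:7.4}.

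For \eqref{eq:7.1}, put $\ell=\sprod{\lambda}{\alpha_0}$, so $\sprod{\lambda'}{\alpha_0}=\ell-i$. Fix $x$ with $\rho(x)=\lambda$; since $\lambda'\preceq\lambda$ there is, by Proposition~\ref{prop:0}, a unique $x'$ with $\rho(x')=\lambda'$ and $x'\in[x,\omega_0]$, and then $E_x\subseteq E_{x'}$. From $\pi(E_x)/\pi(E_{x'})=q^{-2i}$ one sees that $E_{x'}$ is the disjoint union of exactly $q^{2i}$ such atoms $E_x$, and that $\EE[\ind{E_x}\mid\mathcal{F}_{\lambda'}]=q^{-2i}\ind{E_{x'}}$. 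Testing \eqref{eq:7.1} on $f=\ind{E_x}$ therefore reduces it to the claim that, for every vertex $y$ with $\rho(y)=\mu$,
\[
  \pi(E_x\cap E_y)=q^{-2i}\,\pi(E_{x'}\cap E_y),
\]
i.e.\ that $\pi(E_x\cap E_y)$ depends only on $x'$ and $y$, not on which of the $q^{2i}$ vertices $x$ above $x'$ one takes.

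To prove this, assume $E_{x'}\cap E_y\neq\emptyset$ (otherwise there is nothing to prove) and note that the case $k=0$ is the measure formula $\pi(E_x)=q^{-2i}\pi(E_{x'})$, so take $k\geq1$. For $\omega\in E_{x'}\cap E_y$ the apartment $[\omega,\omega_0]$ contains the sectors $[x',\omega_0]$ and $[y,\omega_0]$, whose base vertices lie at the common level $\ell-i$, and since $k\geq1$ we have $y\neq x'$ and $x'\notin[y,\omega_0]$. The vertex $\hat x=\hat x(\omega)$ of $[\omega,\omega_0]$ retracting to $\lambda=\lambda'+i\lambda_1$ lies in the sector pointing \emph{upward} from $x'$ inside $[\omega,\omega_0]$, which is contained in the levels $\geq\ell-i$, while $[y,\omega_0]$ is contained in the levels $\leq\ell-i$; these two subcomplexes are therefore disjoint. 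We must show that $\pi(\{\omega\in E_{x'}\cap E_y:\hat x(\omega)=x\})$ is the same for each of the $q^{2i}$ admissible $x$. This follows from a conditional independence statement: on the atom $E_{x'}$ of $\mathcal{F}_{\lambda'}$, the part of $[\omega,\omega_0]$ strictly above level $\ell-i$ (which already determines $\hat x$) is $\pi$-independent of the part at levels $\leq\ell-i$ (which determines membership in $E_y$). One proves this by extending $[x',\omega_0]$ to an apartment one chamber at a time, working upward from $x'$, and checking that at each step the number of admissible chambers, hence the relevant probabilities, are unaffected by what happens at levels $\leq\ell-i$ — the same kind of local, finite-projective-plane bookkeeping carried out in the proofs of Lemmas~\ref{lem:1} and~\ref{lem:4}. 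Integrating over $E_{x'}$ gives the displayed identity and hence \eqref{eq:7.1}, and then \eqref{eq:7.2} by the adjoint reduction.

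For \eqref{eq:7.4} I would use \eqref{eq:7.2} to replace its right-hand side $\EE[f\mid\mathcal{F}_\mu\mid\mathcal{F}_{\lambda'}]$ by $\EE[f\mid\mathcal{F}_\mu\mid\mathcal{F}_\lambda]$; the identity then asserts that every $(\mathcal{F}_\mu\vee\mathcal{F}_{\tilde\mu})$-measurable $g$ satisfies $\EE[g\mid\mathcal{F}_\lambda]=\EE[g\mid\mathcal{F}_\mu\mid\mathcal{F}_\lambda]$, i.e.\ that the increment of $\mathcal{F}_{\tilde\mu}$ over $\mathcal{F}_\mu$ is, conditionally on $\mathcal{F}_\mu$, independent of $\mathcal{F}_\lambda$. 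Since $\mu$ and $\tilde\mu$ differ by a single step $\lambda_2-\lambda_1$ at level $\ell-i$, whereas $\lambda$ sits $i$ levels higher, the same level-separation argument applies: the ``new'' information carried by $\tilde\mu$ is encoded in the residue of a level-$(\ell-i)$ vertex, exactly as in the proof of Lemma~\ref{lem:1}, and it is independent of the upward continuation that reaches $\lambda$; \eqref{eq:7.3} then follows by adjointness. The one genuinely delicate point in the whole argument is this conditional independence across a horizontal level-cut — that on a fixed atom of $\mathcal{F}_{\lambda'}$ the structure of $[\omega,\omega_0]$ strictly above level $\ell-i$ is $\pi$-independent of its structure at levels $\leq\ell-i$, which must be extracted from the building axioms by the chamber-by-chamber extension above; everything else is bookkeeping, and this formulation has the pleasant feature of disposing of arbitrary $i$ and $k$ at once rather than by induction.
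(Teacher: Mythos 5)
Your high-level strategy is a sensible alternative to the paper's, but the crucial step is both misstated and unproved, and the misstatement is not merely cosmetic: the claim you rely on is actually false, while a weaker variant that would suffice is left unverified.

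You reduce \eqref{eq:7.1} to the identity $\pi(E_x\cap E_y)=q^{-2i}\pi(E_{x'}\cap E_y)$, and you propose to deduce this from the assertion that ``on the atom $E_{x'}$ of $\mathcal{F}_{\lambda'}$, the part of $[\omega,\omega_0]$ strictly above level $\ell-i$ is $\pi$-independent of the part at levels $\leq\ell-i$.'' This full conditional independence across a level-cut does not hold. Already in the residue of $x'$ (the ball of radius one, a finite projective plane), the apartment $[\omega,\omega_0]$ picks out a hexagonal flag whose two level-$(\ell-i-1)$ vertices are determined by $[x',\omega_0]$; call them $p_0,l_0$ in the notation of \textsc{Figure~\ref{fig:2}}, and let $p_1,l$ be the two level-$(\ell-i+1)$ vertices and $l_1,p$ the two level-$(\ell-i)$ vertices. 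Given $p_0,l_0$, the pair $(p_1,l)$ determines the pair $(l_1,p)$ outright — $l_1$ is the line through $p_0,p_1$ and $p=l\cap l_0$ — so the $\sigma$-field of the strictly-above part contains that of the level-$(\ell-i)$ part of the residue. Two $\sigma$-fields related by inclusion are independent only if the smaller is trivial, and here it is not. So the ``strictly above'' and ``at or below'' pieces of $[\omega,\omega_0]$ are emphatically not independent on $E_{x'}$, and a chamber-by-chamber extension argument that implicitly assumes they are cannot be carried through as sketched.

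What \emph{is} true, and what you actually need, is the much weaker assertion that the two specific events $E_{\hat x}$ and $E_y$ are conditionally $\pi$-independent given $E_{x'}$ (equivalently, that $\pi(E_x\cap E_y\mid E_{x'})$ factors). For $i=k=1$ this is exactly the paper's base-case computation $\EE[\ind{E_{p_1}}|\mathcal{F}_\mu]=q^{-3}\sum_{p\sim l_0,\,p\neq p_0}\ind{E_p}$, and it is consistent because, given $(p_0,l_0)$ and $p_1$, the vertex $p$ remains uniformly distributed over $l_0\setminus\{p_0\}$ even though the full hexagon is then pinned down. But verifying this event-level independence for arbitrary $i$ and $k$ is precisely the content of the lemma, and it requires some bookkeeping — the paper does it by proving the $i=1,k=1$ case explicitly and then propagating via Lemma~\ref{lem:4} (to move $k$) and induction (to move $i$). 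Your sketch collapses that bookkeeping into an appeal to an independence statement that, as formulated, fails; the proof therefore has a genuine gap at its central step. The same issue recurs in your treatment of \eqref{eq:7.4}, where the Markov-type identity $\EE[g\mid\mathcal{F}_\lambda]=\EE[g\mid\mathcal{F}_\mu\mid\mathcal{F}_\lambda]$ for $(\mathcal{F}_\mu\vee\mathcal{F}_{\tilde\mu})$-measurable $g$ is again asserted on the strength of the same (false) level-cut independence. The routine reductions — self-adjointness giving \eqref{eq:7.2} from \eqref{eq:7.1} and \eqref{eq:7.3} from \eqref{eq:7.4}, and the translation of \eqref{eq:7.1} into the measure identity on atoms — are fine and coincide with the paper's; it is only the heart of the argument that is missing.
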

\begin{proof}
  We first prove~\eqref{eq:7.1} for $i=1$ and $k=1$.  Because
  $\EE[f|\mathcal{F}_{\lambda'}]
  =\EE[f|\mathcal{F}_\lambda|\mathcal{F}_{\lambda'}]$, it is sufficient
  to consider $f=\ind{E_{p_1}}$ where $\rho(p_1)=\lambda$.  Use
  \textsc{Figure~\ref{fig:2}} to fix the notation, and note that if
  $p_1$~retracts to~$\lambda$, then $x$~retracts to~$\lambda'$ and
  $p$~to $\mu$.  One calculates:
  \begin{align*}
    \EE[\ind{E_{p_1}}| \mathcal{F}_\lambda| \mathcal{F}_\mu] 
    = \EE[\ind{E_{p_1}}| \mathcal{F}_\mu] 
    = q^{-3} \sum_{\atop{p \sim l_0}{p \neq p_0}} \ind{E_p}
    &= q^{-2} \EE[\ind{E_x} | \mathcal{F}_\mu] \\
    & = \EE[\ind{E_{p_1}} | \mathcal{F}_{\lambda'} | \mathcal{F}_\mu].
  \end{align*}
  Next consider the case~$i=1$, $k>1$.  Set $\mu'=\mu+\lambda_1$, $\nu
  = \mu + \lambda_1 - \lambda_2$ and $\nu' = \nu + \lambda_1$ (see
  \textsc{Figure~\ref{fig:4}}). Since $\mathcal{F}_\mu$ is a subfield
  of $\mathcal{F}_{\mu'}$ we have
  \begin{equation*}
    \EE[f | \mathcal{F}_\lambda | \mathcal{F}_\mu] =
    \EE[f | \mathcal{F}_\lambda | \mathcal{F}_{\mu'} | \mathcal{F}_\mu].
  \end{equation*}
	\begin{figure}[h]
		\includegraphics{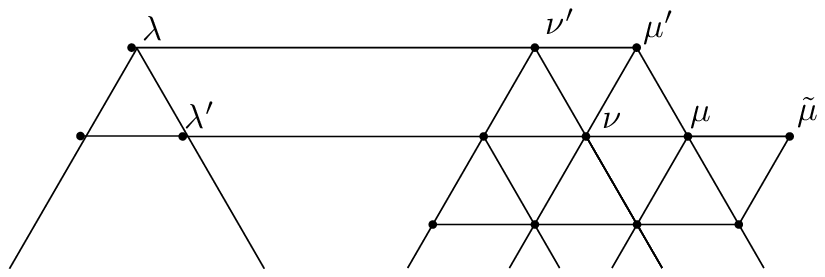}
		\caption{}
		\label{fig:4}
	\end{figure}
  Thus, applying Lemma~\ref{lem:4} we obtain
  \begin{align*}
    \EE[f |\mathcal{F}_\lambda | \mathcal{F}_\mu] 
    =\EE[f |\mathcal{F}_\lambda | 
      \mathcal{F}_{\mu'} | \mathcal{F}_\mu] 
    & = \EE[f | \mathcal{F}_\lambda | \mathcal{F}_{\nu'} |
      \mathcal{F}_{\mu'} | \mathcal{F}_\mu] \\
    & = \EE[f | \mathcal{F}_\lambda | \mathcal{F}_{\nu'} | \mathcal{F}_\mu]
    = \EE[f | \mathcal{F}_\lambda | \mathcal{F}_{\nu} | \mathcal{F}_\mu]
  \end{align*}
  where in the last step we have used the case $k = 1$.  Now apply
  induction on~$k$ and Lemma~\ref{lem:4} again to get
  \begin{equation*}
    \EE[f | \mathcal{F}_\lambda | \mathcal{F}_{\nu} | \mathcal{F}_\mu]
    = \EE[f | \mathcal{F}_{\lambda'} | \mathcal{F}_{\nu} |
      \mathcal{F}_\mu]
    = \EE[f | \mathcal{F}_{\lambda'} | \mathcal{F}_\mu]
    .
  \end{equation*}
  To extend to the case $i>1$, use induction on~$i$ and observe that
  \begin{align*}
    \EE[f | \mathcal{F}_\lambda | \mathcal{F}_\mu] 
    =\EE[f | \mathcal{F}_\lambda | \mathcal{F}_{\mu'} 
    	| \mathcal{F}_\mu]
    & =\EE[f | \mathcal{F}_{\lambda' + \lambda_1} | \mathcal{F}_{\mu'}|\mathcal{F}_\mu] \\
    & =\EE[f | \mathcal{F}_{\lambda' + \lambda_1} | \mathcal{F}_\mu]
    =\EE[f | \mathcal{F}_{\lambda'} | \mathcal{F}_\mu].
  \end{align*}
  The proof of \eqref{eq:7.3} is analogous, starting with the case
  $i=1$, $k=0$. Identity \eqref{eq:7.1} can be read as $\EE_\mu
  \EE_\lambda = \EE_\mu \EE_{\lambda'}$. The expectation operators are
  orthogonal projections with respect to the usual inner product, and
  taking adjoints gives $\EE_\lambda \EE_\mu = \EE_{\lambda'} \EE_\mu$
  which is \eqref{eq:7.2}.  To be more precise, one takes the inner
  product of either side of \eqref{eq:7.2} with some nice test
  function, applies self-adjointness, and reduces to~\eqref{eq:7.1}.
  Likewise, \eqref{eq:7.4}~follows from~\eqref{eq:7.3}.
\end{proof}

\begin{lemma}
\label{lem:5}
  Suppose $\lambda = i \lambda_1 + j \lambda_2$, $\mu = \lambda +
  k(\lambda_1 - \lambda_2)$.  Then for any locally integrable
  function $f$ on $\Omega_0$
  \begin{equation*}
    \EE[f | \mathcal{F}_\mu | \mathcal{F}_\lambda]
    =
    \begin{cases}
      \EE[f | \mathcal{F}_\mu | \mathcal{F}_{i,\infty}] &
      \text{if } k \geq 0,\\ 
      \EE[f | \mathcal{F}_\mu | \mathcal{F}_{\infty,j}] &
      \text{if } k \leq 0 
      .
    \end{cases}
  \end{equation*}
\end{lemma}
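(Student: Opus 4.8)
The plan is to realise $\mathcal{F}_{i,\infty}$ as the $\sigma$-field generated by the \emph{increasing} family $\seq{\mathcal{F}_{i\lambda_1+j'\lambda_2}}{j'\geq j}$, to use Lemma~\ref{lem:3} to show that $\EE[f | \mathcal{F}_\mu | \mathcal{F}_{i\lambda_1+j'\lambda_2}]$ stabilises at $\EE[f | \mathcal{F}_\mu | \mathcal{F}_\lambda]$ as soon as $j'\geq j$, and then to pass to the limit $j'\to\infty$ via increasing martingale convergence. It suffices to treat the case $k\geq 0$; the case $k\leq 0$ follows by exchanging $\lambda_1$ and $\lambda_2$, with $\mathcal{F}_{\infty,j}$ in place of $\mathcal{F}_{i,\infty}$.

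First I would record the elementary fact that, since $\sprod{\lambda_r}{\alpha_s}=\delta_{rs}$, the order $\preceq$ on $P$ is coordinatewise: $a\lambda_1+b\lambda_2\preceq a'\lambda_1+b'\lambda_2$ if and only if $a\leq a'$ and $b\leq b'$. In particular $i\lambda_1+j'\lambda_2\preceq i\lambda_1+(j'+1)\lambda_2$ for every $j'$, so by Proposition~\ref{prop:0} the fields $\mathcal{F}_{i\lambda_1+j'\lambda_2}$ increase with $j'$; consequently $\mathcal{F}_{i,\infty}=\sigma\bigl(\bigcup_{j'\geq j}\mathcal{F}_{i\lambda_1+j'\lambda_2}\bigr)$. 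Moreover $\lambda=i\lambda_1+j\lambda_2\preceq i\lambda_1+j'\lambda_2$ whenever $j'\geq j$, so $\mathcal{F}_\lambda\subseteq\mathcal{F}_{i\lambda_1+j'\lambda_2}\subseteq\mathcal{F}_{i,\infty}$ in that range.

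The key step is an application of Lemma~\ref{lem:3}, in the form \eqref{eq:7.2} with $\lambda_1$ and $\lambda_2$ interchanged. Fix $j'\geq j$ and apply that lemma with its ``$\lambda$'' equal to $i\lambda_1+j'\lambda_2$, its parameter ``$i$'' equal to $j'-j$, and its parameter ``$k$'' equal to our $k$. Both parameters lie in $\NN$ precisely because $j'\geq j$ and $k\geq 0$; the ``$\lambda'$'' produced by the lemma is then $i\lambda_1+j'\lambda_2-(j'-j)\lambda_2=\lambda$, and the ``$\mu$'' it produces is $\lambda+k(\lambda_1-\lambda_2)=(i+k)\lambda_1+(j-k)\lambda_2$, i.e.\ exactly our $\mu$. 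Hence Lemma~\ref{lem:3} gives, for every $j'\geq j$,
\[
	\EE[f | \mathcal{F}_\mu | \mathcal{F}_{i\lambda_1+j'\lambda_2}]=\EE[f | \mathcal{F}_\mu | \mathcal{F}_\lambda],
\]
so the left-hand side does not depend on $j'$ in this range. Letting $j'\to\infty$ and using that $\EE[\,\cdot\, | \mathcal{F}_{i\lambda_1+j'\lambda_2}]\to\EE[\,\cdot\, | \mathcal{F}_{i,\infty}]$ (increasing martingale convergence, applied to $\EE[f | \mathcal{F}_\mu]$), the constant value equals $\EE[f | \mathcal{F}_\mu | \mathcal{F}_{i,\infty}]$, which is the assertion for $k\geq 0$; the case $k\leq 0$ is identical after the $\lambda_1\leftrightarrow\lambda_2$ exchange.

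There is no substantial obstacle beyond bookkeeping: the only thing to watch is that the sign constraints in Lemma~\ref{lem:3} force $j'\geq j$, so one must send $j'$ all the way to infinity rather than hoping for a finite-level identity. On the measure-theoretic side the limit is routine since $\pi$ is finite; if one prefers to avoid invoking martingale convergence one may instead note that the common value $\EE[f | \mathcal{F}_\mu | \mathcal{F}_\lambda]$ is $\mathcal{F}_{i,\infty}$-measurable and integrates correctly against every generating atom $E_x$ with $\rho(x)=i\lambda_1+j'\lambda_2$, $j'\geq j$, and conclude by uniqueness of the Radon--Nikodym derivative.
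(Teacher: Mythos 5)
Your argument is correct and follows the paper's route: both proofs apply the $\lambda_1\leftrightarrow\lambda_2$ exchanged form of Lemma~\ref{lem:3} to obtain $\EE[f\,|\,\mathcal{F}_\mu\,|\,\mathcal{F}_{i\lambda_1+j'\lambda_2}]=\EE[f\,|\,\mathcal{F}_\mu\,|\,\mathcal{F}_\lambda]$ for all $j'\geq j$, and then pass to the limit in $j'$; you invoke upward martingale convergence where the paper uses exactly the test-function/Radon--Nikodym argument you offer as an alternative at the end. One small correction: $\pi$ is not a finite measure---$\pi(E_x)=q^{-2\sprod{\lambda}{\alpha_0}}$ is unbounded below along levels with $\sprod{\lambda}{\alpha_0}\to-\infty$, so $\pi(\Omega_0)=\infty$ and $\pi$ is only $\sigma$-finite; the phrase ``routine since $\pi$ is finite'' should therefore be replaced either by a localization to a $\sigma$-finite exhaustion (needed anyway since $f$ is merely locally integrable) or, more simply, by your fallback Radon--Nikodym argument, which avoids convergence altogether and coincides with the paper's proof.
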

\begin{proof}
Suppose $k \geq 0$. By Lemma~\ref{lem:3} for any $j' \geq 0$ we have
\begin{equation*}
  \EE_\mu\EE_{\lambda + j' \lambda_2} =  \EE_\mu\EE_\lambda
  .
\end{equation*}
So if $g$~is $\mathcal{F}_{\lambda + j' \lambda_2}$-measurable and compactly
supported, then
\begin{equation}
  \label{eq:51}
  \begin{aligned}
  \nonumber
  \sprod{g}{\EE_{i,\infty}\EE_\mu f}
  =\sprod{\EE_\mu\EE_{i,\infty} g}{f} 
  & =\sprod{\EE_\mu g}{f} \\
  & =\sprod{\EE_\mu\EE_{\lambda+j'\lambda_2} g}{f} \\
  & =\sprod{\EE_\mu\EE_\lambda g}{f}
  =\sprod{g}{\EE_\lambda\EE_\mu f}.
  \end{aligned}
\end{equation}
The test functions~$g$ which we use are sufficient to distinguish
between one $\mathcal{F}_{i,\infty}$-measurable function and another.
Since $\EE_{i,\infty}\EE_\mu f$ and~$\EE_\lambda\EE_\mu f$ are both
$\mathcal{F}_{i,\infty}$-measurable the proof is done.
\end{proof}

\section{Littlewood-Paley theory}
\subsection{Maximal functions}
The natural maximal function $M^*$ for a locally integrable function
$f$ on $\Omega_0$ is defined by
\begin{equation*}
M^* f = \max_{\lambda \in P} \abs{f_\lambda}.
\end{equation*}
Additionally, we define two auxiliary single parameter maximal functions
\begin{align*}
L^* f &= \max_{i \in \ZZ} \EE[\,|f|\, |\, \mathcal{F}_{i, \infty}], &
R^* f &= \max_{j \in \ZZ} \EE[\,|f|\, |\, \mathcal{F}_{\infty,j}]. &
%L^* f = \max_{i \in \ZZ} \big\lvert \EE[f | \mathcal{F}_{i, \infty}]
%\big\rvert, \quad R^* f = \max_{j \in \ZZ} \big\lvert \EE[f |
%  \mathcal{F}_{\infty, j}] \big\rvert.
\end{align*}
\begin{lemma}
\label{lem:2}
  Let $\lambda \in P$ and~$k \in \NN$. For any non-negative locally
  integrable function~$f$ on~$\Omega_0$
  \begin{equation*}
    \big(\EE_{\lambda+k \lambda_2} \EE_{\lambda+k\lambda_1}\big)^2 f
     \geq (1-q^{-1}) \EE_\lambda f.
  \end{equation*}
\end{lemma}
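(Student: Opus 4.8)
The plan is to begin with the case $k=1$, which is just \eqref{eq:23}, and then reduce the general case to a single application of the residue computation that underlies the proof of Lemma~\ref{lem:1}.

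When $k=1$ the operator $\EE_{\lambda+\lambda_2}\EE_{\lambda+\lambda_1}$, being a composition of conditional expectations, sends non-negative functions to non-negative functions, so \eqref{eq:23} gives at once
\[
  \big(\EE_{\lambda+\lambda_2}\EE_{\lambda+\lambda_1}\big)^2 f
  = q^{-1}\EE_{\lambda+\lambda_2}\EE_{\lambda+\lambda_1} f
    + (1-q^{-1})\EE_\lambda f
  \ \geq\ (1-q^{-1})\EE_\lambda f .
\]
For general $k$ I would first reduce to atoms. Since $\EE_{\lambda+k\lambda_1} f$ is a non-negative combination of indicators $\ind{E_p}$ with $\rho(p)=\lambda+k\lambda_1$, and for such a $p$ one has $\EE_{\lambda+k\lambda_1}\ind{E_p}=\ind{E_p}$ and $\EE_\lambda\ind{E_p}=q^{-2k}\ind{E_x}$, where $x$ is the unique vertex of $[p,\omega_0]$ retracting to $\lambda$, it suffices to show
\[
  \EE_{\lambda+k\lambda_2}\EE_{\lambda+k\lambda_1}\EE_{\lambda+k\lambda_2}\ind{E_p}
  \ \geq\ (1-q^{-1})\,q^{-2k}\,\ind{E_x}
\]
for every such $p$. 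To compute the left-hand side one sets up the analogue, for a length-$k$ configuration, of \textsc{Figure~\ref{fig:2}}: only $x$ retracts to $\lambda$, the vertices of $[\omega,\omega_0]$ lying over $x$ that reach level $\sprod{\lambda}{\alpha_0}+k$ on the $\lambda_2$-side retract to $\lambda+k\lambda_2$, those reaching it on the $\lambda_1$-side to $\lambda+k\lambda_1$, and so on, and the incidences are governed by the wedge-shaped subcomplex obtained by joining $[p,\omega_0]$ with all the sectors $[y,\omega_0]$, $\rho(y)=\lambda+k\lambda_2$, that lie in a common apartment with $[p,\omega_0]$. Repeating the computation of Lemma~\ref{lem:1} (the passage \eqref{eq:5}--\eqref{eq:9} and the lines after it) along this wedge should produce, after the three averagings, a non-negative main term together with the term $(1-q^{-1})\EE_\lambda\ind{E_p}$, the constant $1-q^{-1}$ arising, exactly as in the last display of that proof, from the count of how many of the $(\lambda+k\lambda_2)$-type vertices reached from $p$ lie over a fixed $(\lambda+k\lambda_1-\lambda_2)$-type vertex (a ``$q-1$ out of $q$'').

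The step where the real work lies is the combinatorial model itself: for $k\geq2$ one has to identify precisely which vertices $y$ with $\rho(y)=\lambda+k\lambda_2$ satisfy $E_y\cap E_p\neq\emptyset$ and with what measure, that is, to understand the wedge spanned by $[p,\omega_0]$ and those sectors and how it extends to apartments — the length-$k$ analogue of the single projective plane that sufficed for Lemma~\ref{lem:1}. There is also a purely algebraic alternative which I would expect to be messier: using Lemma~\ref{lem:4} to flatten $\EE_{\lambda+k\lambda_2}\EE_{\lambda+k\lambda_1}$ into a chain $\EE_{\nu_k}\cdots\EE_{\nu_0}$ of unit-step projections along the level line $\nu_j=\lambda+(k-j)\lambda_1+j\lambda_2$, one finds inside the square a block $\EE_{\nu_1}\EE_{\nu_0}\EE_{\nu_1}$, substitutes the identity \eqref{eq:1} for it (at base vertex $\nu_0-\lambda_1$), discards the non-negative summand, collapses the $\EE_{\nu_0-\lambda_1}$ summand by iterating Lemma~\ref{lem:3}, and treats the remaining error term with the join identities \eqref{eq:7.3}--\eqref{eq:7.4} of Lemma~\ref{lem:3} (just as one checks $\EE[\,\cdot\,|\,\mathcal{F}_{\lambda+\lambda_2-\lambda_1}\vee\mathcal{F}_\lambda]\EE_{\lambda+\lambda_2}\EE_{\lambda+\lambda_1}=\EE_\lambda$ when $k=1$); the obstacle in this route is the bookkeeping required to see that the surviving contribution is exactly $(1-q^{-1})\EE_\lambda$ and that the constant does not degrade as $k$ grows.
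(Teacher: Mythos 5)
The $k=1$ case is correctly disposed of via \eqref{eq:23}. But for $k\geq 2$ you give only two sketches, neither carried out, and the central idea of the paper's proof is absent from both. The paper's argument rests on an \emph{exact} descent identity (its equation~(8)): with $\mu=k\lambda_1$, $\mu''=k\lambda_2$, $\mu'=\lambda_1+(k-1)\lambda_2$ and the corresponding $\nu,\nu',\nu''$ at level $k-1$,
\begin{equation*}
  \EE_{\mu''}\EE_{\mu}\EE_{\mu''}\EE_{\mu}
  - q^{-1}\EE_{\mu''}\EE_{\mu}\EE_{\mu'}\EE_{\mu}
  \;=\;
  \EE_{\nu''}\EE_{\nu}\EE_{\nu''}\EE_{\nu}
  - q^{-1}\EE_{\nu''}\EE_{\nu}\EE_{\nu'}\EE_{\nu},
\end{equation*}
so the quantity on the left is literally independent of $k$; iterating down to $k=1$ and invoking \eqref{eq:23} gives $(\EE_{k\lambda_2}\EE_{k\lambda_1})^2 = q^{-1}\EE_{k\lambda_2}\EE_{k\lambda_1}\EE_{\mu'}\EE_{k\lambda_1} + (1-q^{-1})\EE_0$, with a manifestly non-negative first term. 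The constant $(1-q^{-1})$ survives because nothing is thrown away until the very last line. Your algebraic route, by contrast, proposes to ``discard the non-negative summand'' after one use of \eqref{eq:1}; once you do that you hold an inequality, not an identity, and any further unit step costs you another factor. You have diagnosed this yourself --- ``the constant does not degrade as $k$ grows'' is precisely the obstacle --- but you have not produced the mechanism that removes it, which is the telescoping identity above (whose proof uses Lemmas~\ref{lem:1}, \ref{lem:3} and \ref{lem:4} to push the $\EE_{\mu'}$-correction down one level rather than estimating it away).

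Your combinatorial route has a similar problem: the proof of \eqref{eq:1}/\eqref{eq:23} lives entirely inside a single ball of radius one, which is a finite projective plane; for $k\geq 2$ the relevant configuration of vertices at distance $k$ in a common apartment with $[p,\omega_0]$ is not a projective plane and the counts do not reduce to the single ``$q-1$ out of $q$'' enumeration you invoke. You flag this as ``where the real work lies'', but that concession is an acknowledgement that the proof is not there. As written, then, the proposal proves the lemma only for $k=1$; the general case requires the descent identity (8) or an equivalent device, and the proposal does not supply one.
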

\begin{proof}
  We may assume $\lambda = 0$. Let us define (see
  {\textsc{Figure~\ref{fig:5}}})
  \begin{align*}
    \mu &= k \lambda_1, 
    &\mu' &= \lambda_1 + (k-1) \lambda_2, 
    &\mu'' &= k \lambda_2,\\
    \nu &= (k-1) \lambda_1, 
    &\nu' &= \lambda_1 + (k-2) \lambda_2, 
    &\nu'' &= (k-1)\lambda_2.
  \end{align*}
  \begin{figure}[h]
    \includegraphics{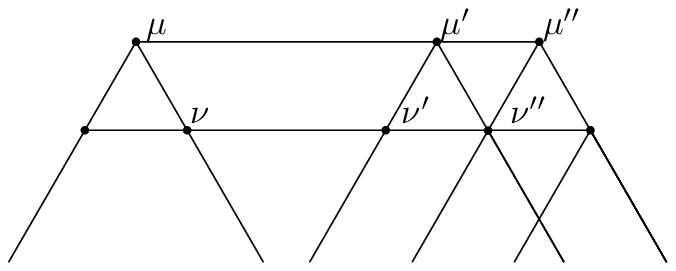}
    \caption{}
    \label{fig:5}
  \end{figure}
  We show
  \begin{equation}
    \label{eq:8}
    \EE_{\mu''} \EE_{\mu} \EE_{\mu''} \EE_{\mu} 
    - q^{-1} \EE_{\mu''} \EE_{\mu} \EE_{\mu'} \EE_{\mu}
    =
    \EE_{\nu''} \EE_{\nu} \EE_{\nu''} \EE_{\nu} 
    - q^{-1} \EE_{\nu''} \EE_{\nu} \EE_{\nu'} \EE_{\nu}.
  \end{equation}
  Let $g = \EE[f| \mathcal{F}_{\mu}]$. By two applications of
  Lemma~\ref{lem:4} we can write
  \begin{equation*}
    \EE[g | \mathcal{F}_{\mu''} | \mathcal{F}_{\mu}]
    =\EE[g | \mathcal{F}_{\mu'} | \mathcal{F}_{\mu''} | \mathcal{F}_{\mu'} 
    | \mathcal{F}_{\mu}]
  \end{equation*}
  and by Lemma~\ref{lem:1}
  \begin{equation*}
    \EE[g | \mathcal{F}_{\mu'} | \mathcal{F}_{\mu''} | \mathcal{F}_{\mu'}]
      =q^{-1} \EE[g | \mathcal{F}_{\mu'}] 
      + \EE[g | \mathcal{F}_{\nu''}]\\
      - q^{-1} \EE[g |\mathcal{F}_{\mu'} | \mathcal{F}_{\nu'} \vee
      \mathcal{F}_{\nu''}]. 
  \end{equation*}
  Hence, 
  \begin{equation*}
  \EE[g | \mathcal{F}_{\mu''} | \mathcal{F}_{\mu} | \mathcal{F}_{\mu''}]
    -q^{-1} \EE[g | \mathcal{F}_{\mu'} | \mathcal{F}_{\mu} 
    | \mathcal{F}_{\mu''}]\\
    =\EE[g | \mathcal{F}_{\nu''} | \mathcal{F}_{\mu} | \mathcal{F}_{\mu''}]
      -q^{-1} \EE[g  | \mathcal{F}_{\mu'} | \mathcal{F}_{\nu'} \vee
        \mathcal{F}_{\nu''} |\mathcal{F}_{\mu} | \mathcal{F}_{\mu''}].
  \end{equation*}
  By repeated application of Lemma~\ref{lem:3} we have
  \begin{equation*}
    \EE[g | \mathcal{F}_{\nu''} | \mathcal{F}_{\mu} |
      \mathcal{F}_{\mu''}]
    =\EE[f | \mathcal{F}_{\mu} | \mathcal{F}_{\nu''} |
      \mathcal{F}_{\mu} |  \mathcal{F}_{\mu''}]
    =\EE[f | \mathcal{F}_{\nu} | \mathcal{F}_{\nu''} | \mathcal{F}_{\nu} 
    | \mathcal{F}_{\nu''}]
  \end{equation*}
  and
  \begin{align*}
    \EE[g | \mathcal{F}_{\mu'} | \mathcal{F}_{\nu'} \vee \mathcal{F}_{\nu''} 
    | \mathcal{F}_{\mu} | \mathcal{F}_{\mu''}]
    & =\EE[f | \mathcal{F}_\mu | \mathcal{F}_{\mu'} |
       \mathcal{F}_{\nu'} \vee \mathcal{F}_{\nu''}  
      | \mathcal{F}_{\mu} | \mathcal{F}_{\mu''}] \\
    & = \EE[f | \mathcal{F}_{\nu} | \mathcal{F}_{\nu'} | \mathcal{F}_{\nu} 
    | \mathcal{F}_{\nu''}]
  \end{align*}
which finishes the proof of \eqref{eq:8}. By iteration of \eqref{eq:8} we obtain
\begin{equation*}
  \EE_{\mu''} \EE_{\mu} \EE_{\mu''} \EE_{\mu}
  - q^{-1} \EE_{\mu''} \EE_{\mu} \EE_{\mu'} \EE_{\mu}\\
  =
  \EE_{\lambda_2} \EE_{\lambda_1} \EE_{\lambda_2} \EE_{\lambda_1}
  - q^{-1} \EE_{\lambda_2} \EE_{\lambda_1} \EE_{\lambda_1} \EE_{\lambda_1}
\end{equation*}
which together with Lemma~\ref{lem:1} implies
\begin{equation*}
  \EE_{\mu''} \EE_{\mu} \EE_{\mu''} \EE_{\mu} = 
  q^{-1} \EE_{\mu''} \EE_{\mu} \EE_{\mu'}\EE_{\mu} + (1-q^{-1}) \EE_0.
	\qedhere
\end{equation*}
\end{proof}

\begin{theorem}
\label{th:1}
For each $p \in (1, \infty]$ there is $C_p > 0$ such that
\begin{gather}
  \label{eq:1.1}
  \norm{L^* f}_{L^p} \leq C_p \norm{f}_{L^p},\qquad
  \norm{R^* f}_{L^p} \leq C_p \norm{f}_{L^p},\\
  \label{eq:1.3}
  \norm{M^* f}_{L^p} \leq C_p \norm{f}_{L^p}.
\end{gather}
\end{theorem}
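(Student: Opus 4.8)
The plan is to reduce all three inequalities to the classical one-parameter martingale maximal inequality, the bridge being a pointwise domination of $M^*$ by iterates of $L^*$ and $R^*$, namely \eqref{eq:37}.

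For \eqref{eq:1.1} I would first observe that, by Proposition~\ref{prop:0}, the families $\seq{\mathcal{F}_{i,\infty}}{i\in\ZZ}$ and $\seq{\mathcal{F}_{\infty,j}}{j\in\ZZ}$ are increasing one-parameter filtrations; hence $L^*f=\sup_{i\in\ZZ}\EE[\abs{f}\mid\mathcal{F}_{i,\infty}]$ and $R^*f=\sup_{j\in\ZZ}\EE[\abs{f}\mid\mathcal{F}_{\infty,j}]$ are exactly the Doob maximal functions of the two corresponding martingales, and \eqref{eq:1.1} is just Doob's maximal inequality, with $C_p=p/(p-1)$ for $p\in(1,\infty)$ and $C_\infty=1$ (together with the weak type $(1,1)$ bound that underlies the endpoint remarks in the introduction).

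For \eqref{eq:37} I would fix $\lambda=i\lambda_1+j\lambda_2$ and $f\ge 0$, set $\mu_k=\lambda+k\lambda_1$ and $\nu_k=\lambda+k\lambda_2$, and start from Lemma~\ref{lem:2}, which gives $\EE_\lambda f\le(1-q^{-1})^{-1}(\EE_{\nu_k}\EE_{\mu_k})^2f$ for every $k\in\NN$. Since $\mu_k-\nu_k=k(\lambda_1-\lambda_2)$, two applications of Lemma~\ref{lem:5} furnish the operator identities $\EE_{\nu_k}\EE_{\mu_k}=\EE_{i,\infty}\EE_{\mu_k}$ and $\EE_{\mu_k}\EE_{\nu_k}=\EE_{\infty,j}\EE_{\nu_k}$; substituting these into $(\EE_{\nu_k}\EE_{\mu_k})^2$ one factor at a time collapses three of the four projections, leaving
\[
	\EE_\lambda f\le(1-q^{-1})^{-1}\,\EE_{i,\infty}\EE_{\infty,j}\EE_{i,\infty}\EE_{\mu_k}f.
\]
Then I would let $k\to\infty$: the $\sigma$-fields $\mathcal{F}_{\mu_k}=\mathcal{F}_{(i+k)\lambda_1+j\lambda_2}$ increase and generate $\mathcal{F}_{\infty,j}$, so $\EE_{\mu_k}f\to\EE_{\infty,j}f$ almost everywhere by martingale convergence, and composing with the fixed contraction $\EE_{i,\infty}\EE_{\infty,j}\EE_{i,\infty}$ preserves this limit; thus
\[
	\EE_\lambda f\le(1-q^{-1})^{-1}\,\EE_{i,\infty}\EE_{\infty,j}\EE_{i,\infty}\EE_{\infty,j}f\le(1-q^{-1})^{-1}\,L^*R^*L^*R^*f,
\]
the last step iterating the pointwise bounds $\EE_{\infty,j}g\le R^*g$ and $\EE_{i,\infty}g\le L^*g$. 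Since the right-hand side does not depend on $\lambda$, applying this to $\abs{f}$, using $\abs{f_\lambda}\le\EE_\lambda\abs{f}$ and taking the supremum over $\lambda$ gives $M^*(\abs{f})\le(1-q^{-1})^{-1}L^*R^*L^*R^*(\abs{f})$; running the same argument with $\lambda_1$ and $\lambda_2$ swapped produces the companion term, which is \eqref{eq:37}. Finally \eqref{eq:1.3} follows by applying \eqref{eq:1.1} four times.

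The real content is \eqref{eq:37}, and its engine — Lemma~\ref{lem:2} — is already in hand, so the hard part is mostly careful bookkeeping: matching the index shifts when invoking Lemma~\ref{lem:5} (in particular, whether the direction $\lambda_1-\lambda_2$ or $\lambda_2-\lambda_1$ occurs, which decides whether one lands on $\mathcal{F}_{i,\infty}$ or on $\mathcal{F}_{\infty,j}$), reading off $\mathcal{F}_{(i+k)\lambda_1+j\lambda_2}\uparrow\mathcal{F}_{\infty,j}$ from Proposition~\ref{prop:0} and the half-plane description of those $\sigma$-fields, and justifying the $k\to\infty$ passage — which is cleanest if one first proves the $L^p$ estimate for $f\in L^p\cap L^1$, so that martingale convergence and dominated convergence for a fixed conditional expectation apply without fuss, and then checks that the constant $(1-q^{-1})^{-1}$ stays uniform in $\lambda$ and $k$. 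I expect (a) the invocation of Lemma~\ref{lem:5} with the correct orientation and (b) the clean extraction of \eqref{eq:37} to be the only genuine work.
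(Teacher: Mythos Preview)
Your proposal is correct and follows essentially the same route as the paper: Doob for \eqref{eq:1.1}, then Lemma~\ref{lem:2} plus repeated Lemma~\ref{lem:5} to dominate $\EE_\lambda f$ by $L^*R^*L^*R^*f$, and finally four applications of \eqref{eq:1.1}. The only cosmetic difference is the handling of the innermost projection: the paper restricts first to $f\in L^p(\Omega_0,\mathcal{F}_\mu)$, chooses $k=\sprod{\mu-\lambda}{\alpha_0}$ so that $\mu$, $\nu=\lambda+k\lambda_1$ and $\nu''=\lambda+k\lambda_2$ lie on the same level, and then Lemma~\ref{lem:5} converts all four factors at once (since $\EE_\mu f=f$), with a Fatou argument at the end for general $f$; you instead keep $k$ free, convert three of the four factors, and then send $k\to\infty$ using martingale convergence for the last one. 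Both work; the paper's version avoids the limiting step and the attendant $L^p\cap L^1$ reduction you flag, so it is marginally cleaner, but the content is the same.
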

\begin{proof}
  Inequalities \eqref{eq:1.1} are two instances of Doob's well-known
  maximal inequality for single parameter martingales (see e.g.
  \cite{stein1}). To show \eqref{eq:1.3} consider a non-negative $f
  \in L^p(\Omega_0, \mathcal{F}_\mu)$. Fix $\lambda \in P$. Since $f
  \in L^p(\Omega_0, \mathcal{F}_{\mu'})$ for any $\mu' \succeq \mu$ we
  may assume $\mu \succeq \lambda$. Let
  \begin{equation*}
    \nu = \lambda + \sprod{\mu - \lambda}{\alpha_0} \lambda_1, 
    \qquad
    \nu'' = \lambda + \sprod{\mu - \lambda}{\alpha_0} \lambda_2.
  \end{equation*}
  By Lemma \ref{lem:2}
  \begin{equation*}
    (1- q^{-1}) \EE_\lambda f
    \leq 
    \EE_{\nu''} \EE_{\nu} \EE_{\nu''} \EE_{\nu} f.
  \end{equation*}
  If $\lambda = i \lambda_1 + j \lambda_2$, then repeated application
  of Lemma \ref{lem:5} gives
  \begin{align*}
    \EE_{\nu''} \EE_{\nu} \EE_{\nu''} \EE_{\nu} f = \EE_{\nu''}
    \EE_{\nu} \EE_{\nu''} \EE_{\nu} \EE_\mu f & = \EE[f |
      \mathcal{F}_{\infty, j} | \mathcal{F}_{i, \infty} |
      \mathcal{F}_{\infty, j} | \mathcal{F}_{i, \infty} ] \\
	& \leq L^* R^* L^*  R^* f.
  \end{align*}
  By taking the supremum over $\lambda \in P$ we get
  \begin{equation*}
    (1 - q^{-1}) M^* f 
    \leq
    L^* R^* L^* R^* f.
  \end{equation*}
  Hence, by \eqref{eq:1.1} we obtain \eqref{eq:1.3} for $f \in
  L^p(\Omega_0, \mathcal{F}_\mu)$.  Finally, a standard Fatou's lemma
  argument establishes the theorem for arbitrary $f \in
  L^p(\Omega_0)$.
\end{proof}

\subsection{Square function}
Let $f$ be a locally integrable function on $\Omega_0$. Given $i, j
\in \ZZ$ we define projections
\begin{equation*}
L_i f = \EE[f | \mathcal{F}_{i, \infty}] - \EE[f | \mathcal{F}_{i-1, \infty}],
\quad
R_j f = \EE[f | \mathcal{F}_{\infty, j}] - \EE[f | \mathcal{F}_{\infty, j-1}].
\end{equation*}
Note that $L_i$ (respectively $R_j$) is the martingale difference
operator for the filtration $\seq{\mathcal{F}_{i, \infty}}{i \in \ZZ}$
(respectively $\seq{\mathcal{F}_{\infty, j}}{j \in \ZZ}$). For
$\lambda = i \lambda_1 + j \lambda_2$ we set
\begin{equation*}
D_\lambda f = L_i R_j f, \qquad D_\lambda^\star f = R_j L_i f.
\end{equation*}
The following development is inspired by that of~Stein and~Street
in~\cite{stst}.  We start by defining the corresponding square function.
\begin{equation*}
\calS f = \Big(\sum_{\lambda \in P} \abs{D_\lambda f}^2 \Big)^{1/2}.
\end{equation*}
We will also need its dual counterpart
\begin{equation*}
\calS^\star f = \Big(\sum_{\lambda \in P} \abs{D_\lambda^\star f}^2 \Big)^{1/2}.
\end{equation*}
\begin{theorem}
  \label{th:5}
  For every $p \in (1, \infty)$ there is $C_p > 1$ such that 
  \begin{align*}
    C_p^{-1} \norm{f}_{L^p} &\leq \norm{\calS f}_{L^p} \leq C_p \norm{f}_{L^p},
    &
    C_p^{-1} \norm{f}_{L^p} &\leq \norm{\calS^\star f}_{L^p} \leq C_p \norm{f}_{L^p}.
  \end{align*}
  Moreover, on $L^2(\Omega_0)$ square functions $\calS$ and $\calS^\star$
  preserve the norm.
\end{theorem}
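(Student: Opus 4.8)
The plan is to deduce all three assertions --- the two-sided $L^p$ estimates and the $L^2$ isometry --- from two inputs: the $L^p$ boundedness, $p\in(1,\infty)$, of the one-parameter martingale transforms attached to the sub-filtrations $\seq{\mathcal{F}_{i,\infty}}{i\in\ZZ}$ and $\seq{\mathcal{F}_{\infty,j}}{j\in\ZZ}$, and a Parseval-type identity for the families $\seq{D_\lambda}{\lambda\in P}$ and $\seq{D_\lambda^\star}{\lambda\in P}$.

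First I would record the structure of the two sub-filtrations. By Proposition~\ref{prop:0} each of $\seq{\mathcal{F}_{i,\infty}}{i\in\ZZ}$ and $\seq{\mathcal{F}_{\infty,j}}{j\in\ZZ}$ is increasing; their unions generate $\mathcal{F}$, and their tail $\sigma$-fields are trivial, since the ``lower half-planes'' describing $\mathcal{F}_{i,\infty}$-measurability degenerate to the empty set as $i\to-\infty$ (likewise for $j$). As $\pi$ is infinite, the conditional expectation onto a trivial $\sigma$-field vanishes, so martingale convergence gives $\sum_i L_i=I$ and $\sum_j R_j=I$ on $L^2(\Omega_0)$, the $L_i$ (resp.\ $R_j$) being pairwise orthogonal self-adjoint projections. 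Expanding $\sprod{f}{g}$ in one family and refining each term by the other, while using orthogonality within each family, yields for $f,g\in L^2(\Omega_0)$ the identities
\begin{equation*}
\sprod{f}{g}=\sum_{\lambda\in P}\sprod{D_\lambda f}{D_\lambda g},
\qquad
\sprod{f}{g}=\sum_{\lambda\in P}\sprod{D_\lambda^\star f}{D_\lambda^\star g},
\end{equation*}
with absolutely convergent sums. Taking $g=f$ gives $\norm{\calS f}_{L^2}=\norm{f}_{L^2}=\norm{\calS^\star f}_{L^2}$, which is the last claim of the theorem.

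For the upper bounds, fix $p\in(1,\infty)$ and $f\in L^2(\Omega_0)\cap L^p(\Omega_0)$, and let $\seq{r_i}{i\in\ZZ}$, $\seq{r_j'}{j\in\ZZ}$ be independent Rademacher sequences. For every realization of the signs, $\sum_i r_i L_i$ is a martingale transform for $\seq{\mathcal{F}_{i,\infty}}{i\in\ZZ}$ with constant $\pm 1$ coefficients, hence bounded on $L^p(\Omega_0)$ with a constant $B_p$ independent of the signs (cf.\ \cite{burk1}), and similarly for $\sum_j r_j' R_j$. Since, as operators, $\sum_{i,j}r_ir_j' L_iR_j=\big(\sum_i r_i L_i\big)\big(\sum_j r_j' R_j\big)$, we obtain $\norm{\sum_{i,j}r_ir_j' L_iR_j f}_{L^p}\le B_p^2\norm{f}_{L^p}$ uniformly in the signs; averaging and invoking the two-parameter Khintchine inequality gives $\norm{\calS f}_{L^p}\lesssim\norm{f}_{L^p}$. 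The same argument with the factors written in the other order (matching $D_\lambda^\star=R_jL_i$) gives $\norm{\calS^\star f}_{L^p}\lesssim\norm{f}_{L^p}$, and a density argument extends both bounds to all of $L^p(\Omega_0)$.

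The lower bounds follow by duality. For $f\in L^2\cap L^p$ and $g\in L^2\cap L^{p'}$ the first Parseval identity, pointwise Cauchy--Schwarz in $\ell^2(P)$, and H\"older's inequality give
\begin{equation*}
\abs{\sprod{f}{g}}=\Big\lvert\sum_{\lambda\in P}\sprod{D_\lambda f}{D_\lambda g}\Big\rvert
\le\int_{\Omega_0}\calS f\cdot\calS g\,\dpi
\le\norm{\calS f}_{L^p}\norm{\calS g}_{L^{p'}}\lesssim\norm{\calS f}_{L^p}\norm{g}_{L^{p'}},
\end{equation*}
using the upper bound for $\calS$ on $L^{p'}$ just proved; taking the supremum over $g$ yields $\norm{f}_{L^p}\lesssim\norm{\calS f}_{L^p}$, and the estimate for $\calS^\star$ is identical via the second Parseval identity. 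The only genuinely analytic step is the upper bound, and the point to check there is that the classical martingale transform estimate, together with the identities $\sum_i L_i=I$, $\sum_j R_j=I$, are being applied on the $\sigma$-finite (non-probability) space $\Omega_0$ with constants uniform over the $\pm1$ coefficient sequences; this is standard but should be stated with care. Everything else is soft: orthogonality of martingale differences and the duality argument.
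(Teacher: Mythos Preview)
Your proof is correct and follows essentially the same approach as the paper: the $L^2$ isometry comes from iterating the one-parameter Parseval identities for $\seq{L_i}{i\in\ZZ}$ and $\seq{R_j}{j\in\ZZ}$, the upper bound is obtained by writing $\sum_{i,j}\epsilon_i\epsilon_j' D_{i\lambda_1+j\lambda_2}$ as the composition of two one-parameter martingale transforms and invoking Burkholder's inequality together with the two-parameter Khintchine inequality, and the lower bound follows from the polarized $L^2$ identity via Cauchy--Schwarz and H\"older. The paper handles convergence by working with finite truncations $\sum_{\abs{i},\abs{j}\le N}$ and passing to the limit with monotone convergence, which you might prefer to do explicitly rather than manipulating the infinite sums $\sum_i r_i L_i$ directly; otherwise the arguments match.
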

\begin{proof}
Since
\begin{equation*}
  S_L(f) = \Big(\sum_{i \in \ZZ} \abs{L_i f}^2\Big)^{1/2} \quad \text{and}\quad 
  S_R(f) = \Big(\sum_{j \in \ZZ} \abs{R_j f}^2\Big)^{1/2}
\end{equation*}
preserve the norm on $L^2(\Omega_0)$ we have
\begin{equation}
\label{eq:10}
	\begin{aligned}
	\int \sum_{i,j \in \ZZ} \abs{L_i R_j f}^2 \dpi
	&= \sum_{j \in \ZZ} \int \sum_{i \in \ZZ} \abs{L_i R_j f}^2 \dpi \\
	&= \sum_{j \in \ZZ} \int \abs{R_j f}^2 d\pi 
	= \int \abs{f}^2 \dpi.
	\end{aligned}
\end{equation}
Hence, $\calS$ preserves the norm.

For $p \neq 2$ we use the two parameter Khintchine inequality (see
\cite{pal}) and bounds on single parameter martingale transforms (see
\cite{burk1, stein1, weisz}). Let $\seq{\epsilon_i}{i \in \ZZ}$ and
$(\epsilon'_j : j \in \ZZ)$ be sequences of real numbers, with
absolute values bounded above by~$1$. For $N \in \NN$ we consider the
operator
\begin{equation*}
  T_N = \sum_{\abs{i}, \abs{j} \leq N} \epsilon_i \epsilon_j' D_{i\lambda_1 + j \lambda_2}
\end{equation*}
which may be written as a composition $\mathcal{L}_N \mathcal{R}_N$
where
\begin{equation*}
  \mathcal{L}_N = \sum_{\abs{i} \leq N} \epsilon_i L_i, \quad
  \mathcal{R}_N = \sum_{\abs{j} \leq N} \epsilon_j' R_j.
\end{equation*}
Since by Burkholder's inequality (see \cite{burk1, stein1}) the
operators $\mathcal{R}_N$ and 
$\mathcal{L}_N$ are bounded on
$L^p(\Omega_0)$ with bounds uniform in $N$ we have
\begin{equation*}
	\norm{T_N f}_{L^p} \lesssim \norm{f}_{L^p}.
\end{equation*}
Setting $r_k$ to be the Rademacher function, by Khintchine's
inequality we get
\begin{equation*}
  \int 
  \Big(\sum_{\abs{i}, \abs{j} \leq N}
  \abs{D_{i\lambda_1 + j \lambda_2} f}^2 \Big)^{p/2} \dpi
  \lesssim
  \int
  \int_0^1 \int_0^1
  \Big\lvert
  \sum_{\abs{i}, \abs{j} \leq N}
  r_i(s) r_j(t) D_{i \lambda_1 +j \lambda_2} f
  \Big\rvert^p {\: \rm d}s{\: \rm d} t \dpi,
\end{equation*}
which is bounded by $\norm{f}_{L^p}^p$. Finally, let~$N$ approach infinity and use the monotone convergence
theorem to get
\begin{equation*}
  \lVert \calS f \rVert_{L^p} \lesssim \lVert f \rVert_{L^p}.
\end{equation*}
For the opposite inequality, we take $f \in L^p(\Omega_0) \cap
L^2(\Omega_0)$ and $g \in L^{p'}(\Omega_0) \cap L^2(\Omega_0)$ where
$1/p'+1/p = 1$. By polarization of~\eqref{eq:10} and the
Cauchy--Schwarz and H\"older inequalities we obtain
\begin{equation*}
  \sprod{f}{g} = \int \sum_{\lambda \in P} D_\lambda f \overline{D_\lambda g} \dpi
  \leq
  \langle \calS f, \calS g \rangle \leq \norm{\calS f}_{L^p} \norm{\calS g}_{L^{p'}}
  \lesssim 
  \norm{\calS f}_{L^p}
  \norm{g}_{L^{p'}}. \qedhere
\end{equation*}
\end{proof}

Given a set $\{v_\lambda : \lambda \in P\}$ of vectors in a Banach
space, we say that $\sum_{\lambda \in P} v_\lambda$ converges
\emph{unconditionally} if, whenever we choose a bijection $\phi: \NN
\rightarrow P$,
\begin{equation*}
\sum_{n = 1}^\infty v_{\phi(n)} \qquad 
\text{exists, and is independent of~$\phi$.}
\end{equation*}
Equivalently, we may ask that for any increasing, exhaustive sequence
$\seq{F_N}{N \in \NN}$ of finite subsets of $P$, the limit
\begin{equation*}
\lim_{N \to \infty} \sum_{\lambda \in F_N} v_\lambda 
\qquad \text{exists.}
\end{equation*}
The following proposition provides a Calder\'on reproducing formula.
\begin{proposition}
  \label{prop:2}
  For each $p \in (1, \infty)$ and any $f \in L^p(\Omega_0)$,
  \begin{equation*}
    f = \sum_{\lambda \in P} D_\lambda D_\lambda^\star f
  \end{equation*}
  where the sum converges in $L^p(\Omega_0)$ unconditionally.
\end{proposition}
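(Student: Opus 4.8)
The plan is to realize the sum $\sum_{\lambda\in P}D_\lambda D_\lambda^\star f$ as a composition of two bounded operators that factor through the vector-valued space $\lell{p}{2}$, and then to identify this composition with the identity by a soft duality argument on $L^2(\Omega_0)$. The one substantial point is the estimate
\[
  \Big\|\sum_{\lambda\in P}D_\lambda g_\lambda\Big\|_{L^p}
  \lesssim
  \Big\|\Big(\sum_{\lambda\in P}\abs{g_\lambda}^2\Big)^{1/2}\Big\|_{L^p},
\]
valid for finitely supported families $\seq{g_\lambda}{\lambda\in P}$ of functions on $\Omega_0$ with constant independent of the support. Writing $\lambda=i\lambda_1+j\lambda_2$ and $g_{i,j}=g_\lambda$, orthonormality of the Rademacher functions $r_k$ gives the randomization identity
\[
  \sum_{i,j}L_iR_jg_{i,j}
  =\int_0^1\!\!\int_0^1\Big(\sum_i r_i(s)L_i\Big)\Big(\sum_j r_j(t)R_j\Big)\Big(\sum_{k,l}r_k(s)r_l(t)g_{k,l}\Big)\,{\: \rm d}s\,{\: \rm d}t.
\]
By Burkholder's inequality the martingale transforms $\sum_i r_i(s)L_i$ (for the filtration $\seq{\calF_{i,\infty}}{i\in\ZZ}$) and $\sum_j r_j(t)R_j$ (for $\seq{\calF_{\infty,j}}{j\in\ZZ}$) are bounded on $L^p(\Omega_0)$ uniformly in $s,t$, while the two parameter Khintchine inequality applied pointwise on $\Omega_0$ gives $\int_0^1\!\int_0^1\norm{\sum_{k,l}r_k(s)r_l(t)g_{k,l}}_{L^p}^p\,{\: \rm d}s\,{\: \rm d}t\lesssim\norm{(\sum_{k,l}\abs{g_{k,l}}^2)^{1/2}}_{L^p}^p$. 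Combining these via Minkowski's integral inequality and H\"older's inequality on $[0,1]^2$ proves the estimate.

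It follows that the synthesis map $T\colon\seq{g_\lambda}{\lambda}\mapsto\sum_\lambda D_\lambda g_\lambda$, initially defined on finitely supported families, extends to a bounded operator $\lell{p}{2}\to L^p(\Omega_0)$; moreover, for any $\seq{g_\lambda}{\lambda}\in\lell{p}{2}$ the series $\sum_\lambda D_\lambda g_\lambda$ converges unconditionally, because $\seq{g_\lambda\ind{F_N}}{\lambda}\to\seq{g_\lambda}{\lambda}$ in $\lell{p}{2}$ for any increasing exhaustion $\seq{F_N}{N\in\NN}$ of $P$. I would then apply this to $g_\lambda=D_\lambda^\star f$: by Theorem~\ref{th:5}, $\norm{(\sum_\lambda\abs{D_\lambda^\star f}^2)^{1/2}}_{L^p}=\norm{\calS^\star f}_{L^p}\lesssim\norm{f}_{L^p}$, so $\seq{D_\lambda^\star f}{\lambda}\in\lell{p}{2}$, the series $\sum_\lambda D_\lambda D_\lambda^\star f$ converges unconditionally in $L^p(\Omega_0)$, and the operator $Uf=\sum_\lambda D_\lambda D_\lambda^\star f$ is bounded on $L^p(\Omega_0)$. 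It now suffices to show $U=I$; since $U$ and $I$ are bounded on $L^p(\Omega_0)$ and $L^2(\Omega_0)\cap L^p(\Omega_0)$ is dense in $L^p(\Omega_0)$, one may assume $f\in L^2\cap L^p$, noting that $T$ is simultaneously bounded on $\lell{2}{2}$, so that the $L^p$ and $L^2$ values of $Uf$ coincide.

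Finally I would identify $Uf$ with $f$ in $L^2(\Omega_0)$. Because $L_i$ and $R_j$ are self-adjoint on $L^2(\Omega_0)$, the operators $D_\lambda=L_iR_j$ and $D_\lambda^\star=R_jL_i$ are mutually adjoint, so $\sprod{D_\lambda D_\lambda^\star f}{g}=\sprod{D_\lambda^\star f}{D_\lambda^\star g}$ for every $g\in L^2(\Omega_0)$; by the Cauchy--Schwarz inequality and Theorem~\ref{th:5}, $\sum_\lambda\abs{\sprod{D_\lambda^\star f}{D_\lambda^\star g}}\leq\norm{\calS^\star f}_{L^2}\norm{\calS^\star g}_{L^2}=\norm{f}_{L^2}\norm{g}_{L^2}<\infty$, so exchanging sum and inner product is legitimate and $\sprod{Uf}{g}=\sum_{\lambda\in P}\sprod{D_\lambda^\star f}{D_\lambda^\star g}$. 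By Theorem~\ref{th:5} the linear map $f\mapsto\seq{D_\lambda^\star f}{\lambda}$ is an isometry of $L^2(\Omega_0)$ into $\lell{2}{2}$, hence by polarization the right-hand side equals $\sprod{f}{g}$; as $g$ is arbitrary, $Uf=f$. The only genuinely analytic step here is the uniform estimate of the first paragraph (randomization plus Khintchine plus Burkholder); everything afterwards is bookkeeping with unconditional convergence and density. I expect the main subtlety to be purely conceptual: the point of defining $D_\lambda^\star f=R_jL_if$ rather than $L_iR_jf$ is precisely that it makes $D_\lambda^\star$ the $L^2$-adjoint of $D_\lambda$, which is what lets $\sprod{Uf}{g}$ collapse onto the $\calS^\star$-coefficients, where the $L^2$-isometry property of $\calS^\star$ then closes the argument.
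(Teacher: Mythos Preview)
Your proof is correct. Both your argument and the paper's reduce to (i) uniform $L^p$-boundedness of the partial sums $I_N(f)=\sum_{\lambda\in F_N}D_\lambda D_\lambda^\star f$, and (ii) identification of the limit with $f$ on the dense subspace $L^2\cap L^p$ via the polarized $L^2$-isometry of $\calS^\star$. The difference lies in how (i) is obtained. You prove the synthesis estimate $\norm{\sum_\lambda D_\lambda g_\lambda}_{L^p}\lesssim\norm{(g_\lambda)}_{L^p(\ell^2)}$ directly by the randomization identity, Burkholder and Khintchine, and then specialize to $g_\lambda=D_\lambda^\star f$ using Theorem~\ref{th:5}. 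The paper instead writes $\sprod{I_N f}{g}=\sum_{\lambda\in F_N}\sprod{D_\lambda^\star f}{D_\lambda^\star g}$, applies pointwise Cauchy--Schwarz and H\"older, and invokes the $\calS^\star$ bound from Theorem~\ref{th:5} on \emph{both} $L^p$ and $L^{p'}$. These two routes are dual: your synthesis bound is exactly the adjoint of $\calS^\star$-boundedness on $L^{p'}$, and your randomization proof is the same Khintchine--Burkholder argument already used inside Theorem~\ref{th:5}. The paper's path is a little more economical (it quotes Theorem~\ref{th:5} rather than re-running the Rademacher machinery), while yours has the advantage of making the synthesis operator $T\colon\lell{p}{2}\to L^p(\Omega_0)$ explicitly bounded, a statement worth isolating if one intends to reuse it.
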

\begin{proof}
Fix an increasing and exhaustive sequence $\seq{F_N}{N \in \NN}$ of
finite subsets of $P$. Let
\begin{equation*}
	I_N(f) = \sum_{\lambda \in F_N} D_\lambda D_\lambda^\star f.
\end{equation*}
For $f \in L^p(\Omega_0)$ and $g \in L^{p'}(\Omega_0)$, where $1/p +
1/{p'} = 1$, we have
\begin{equation}
\begin{aligned}
  \label{eq:52}
  \abs{\sprod{I_N(f) - I_M(f)}{g}} & = 
  \Big
  \lvert
  \sum_{\lambda \in F_N \setminus F_M}
  \sprod{D_\lambda^\star f}{D_\lambda^\star g}
  \Big
  \rvert \\
  & \leq
  \Big\lVert
  \Big( \sum_{\lambda \in F_N \setminus F_M} (D_\lambda^\star f)^2 \Big)^{1/2}
  \Big\rVert_{L^p}
  \norm{\calS^\star(g)}_{L^{p'}}.
\end{aligned}
\end{equation}
In particular,
\begin{equation*}
  \abs{\sprod{I_N(f)}{g}}
  \leq
  \norm{\calS^\star(f)}_{L^p} \norm{\calS^\star(g)}_{L^{p'}},
\end{equation*}
whence $\norm{I_N(f)}_{L^p} \lesssim \norm{f}_{L^p}$ uniformly in
$N$. Consequently, it is enough to prove convergence for $f \in
L^p(\Omega_0) \cap L^2(\Omega_0)$. From \eqref{eq:52} and the bounded
convergence theorem it follows that for any positive $\epsilon$,
$\norm{I_N(f) - I_M(f)}_{L^p} \leq \epsilon$ whenever $M$ and $N$ are
large enough.  This shows that the limit exists. Finally, for~$g \in
L^{p'}(\Omega_0)\cap L^2(\Omega_0)$, the polarized version
of~\eqref{eq:10} gives
\begin{equation*}
  \lim_{N \rightarrow \infty} \sprod{I_N(f)}{g} 
  = \lim_{N \rightarrow \infty} 
  \sum_{\lambda \in F_N} \sprod{D_\lambda^\star f}{D_\lambda^\star g} 
  = \sprod{f}{g}. \qedhere
\end{equation*}
\end{proof}

\begin{theorem}
  \label{th:6}
  Let $\seq{T_\lambda}{\lambda \in P}$ be a family of operators such that for some
  $\delta > 0$ and $p_0 \in (1, 2)$
  \begin{gather}
    \pnorm{T_\lambda}{1} \lesssim 1, \label{eq:30.1} \\
    \pnorm{T_\mu T_\lambda^\star}{2} \lesssim q^{-\delta \abs{\mu-\lambda}} 
    \quad \text{and} 
    \quad \pnorm{T_\mu^\star T_\lambda}{2} \lesssim q^{-\delta \abs{\mu-\lambda}}, 
    \label{eq:30.2}\\
    \pnorm{D_\lambda T_\mu D_{\lambda'}}{2} 
    \lesssim q^{-\delta \abs{\lambda-\mu}} q^{-\delta \abs{\lambda'-\mu}},
    \label{eq:30.3}\\
    \big\lVert{\sup_{\lambda \in P} \abs{T_\lambda f_\lambda}}\big\rVert_{L^{p_0}}
    \lesssim 
	\big\lVert \sup_{\lambda \in P} \abs{f_\lambda} \big\rVert_{L^{p_0}}. 
    \label{eq:30.4}
  \end{gather}
  Then for any $p \in (p_0, 2]$ the sum $\sum_{\lambda \in P}
    T_\lambda$ converges unconditionally in the strong operator
    topology for operators on~$L^p(\Omega_0)$.
\end{theorem}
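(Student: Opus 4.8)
The plan is to prove unconditional convergence in the strong operator topology on $L^p(\Omega_0)$ for $p \in (p_0, 2]$ by showing that the partial sums $T_{F} = \sum_{\lambda \in F} T_\lambda$, over finite $F \subset P$, form a Cauchy net of operators when applied to a fixed $f \in L^p(\Omega_0)$. Fix $f$ and let $F, F'$ be finite subsets with $F' \subset F$; we must control $\norm{\sum_{\lambda \in F \setminus F'} T_\lambda f}_{L^p}$ and show it tends to $0$ as $F'$ grows. First I would reduce to $f \in L^p(\Omega_0) \cap L^2(\Omega_0)$, which is dense, since a uniform bound on $\norm{T_F}_{L^p \to L^p}$ over finite $F$ will let us pass to general $f$; this uniform bound will itself fall out of the argument below (by summing the almost-orthogonality estimates). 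Then I would insert the Calder\'on reproducing formula of Proposition~\ref{prop:2}, writing $f = \sum_{\lambda' \in P} D_{\lambda'} D_{\lambda'}^\star f$, so that $\sum_{\lambda \in G} T_\lambda f = \sum_{\lambda \in G} \sum_{\lambda' \in P} T_\lambda D_{\lambda'} D_{\lambda'}^\star f$ for $G = F \setminus F'$.

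The heart of the matter is an almost-orthogonality estimate in the style of Duoandikoetxea--Rubio de Francia, interpolated between $L^2$ and $L^{p_0}$. On the $L^2$ side, I would use \eqref{eq:30.3}: for the operator $U_G = \sum_{\lambda \in G} T_\lambda D_{\lambda'}$ acting on the $D_{\lambda'}^\star$-pieces, the bound $\pnorm{D_\mu T_\lambda D_{\lambda'}}{2} \lesssim q^{-\delta\abs{\mu - \lambda}} q^{-\delta \abs{\lambda - \lambda'}}$ together with the orthogonality of the $D_\mu$ (from Theorem~\ref{th:5}, $\calS$ preserves the $L^2$ norm) gives a Schur-type bound: $\norm{\sum_{\lambda \in G} T_\lambda D_{\lambda'} h_{\lambda'}}_{L^2}^2 \lesssim \sum_{\mu}\big(\sum_{\lambda \in G} q^{-\delta\abs{\mu-\lambda}} q^{-\delta\abs{\lambda-\lambda'}} \norm{h_{\lambda'}}\big)^2$, and summing over $\lambda'$ via the fact that the convolution kernel $q^{-\delta\abs{\cdot}}$ on $P \cong \ZZ^2$ is in $\ell^1$, one obtains $\norm{\sum_{\lambda \in G} T_\lambda f}_{L^2} \lesssim \norm{\calS^\star f}_{L^2} \approx \norm{f}_{L^2}$, with an extra gain that shrinks as $G$ is supported far from the origin — more precisely, one gets a factor like $\sum_{\lambda \in G} (\text{decaying weight})$, which is the tail of a convergent sum and hence small when $F'$ is large. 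On the $L^{p_0}$ side, the vector-valued maximal bound \eqref{eq:30.4}, combined with \eqref{eq:30.1} (the uniform $L^1 \to L^1$ bound, giving weak $(1,1)$ type control after the standard Calder\'on--Zygmund / good-$\lambda$ machinery) and \eqref{eq:30.2}, furnishes the required weak-type or $L^{p_0}$ endpoint estimate for the truncated sums; this is exactly the point where one mimics Theorem~\ref{th:6}'s cited source \cite{DuoRdF}. Marcinkiewicz interpolation between the $L^{p_0}$ and $L^2$ bounds then yields, for $p \in (p_0, 2]$, a bound $\norm{\sum_{\lambda \in G} T_\lambda f}_{L^p} \lesssim \epsilon(G) \norm{f}_{L^p}$ with $\epsilon(G) \to 0$ as $\min_{\lambda \in G}\abs{\lambda} \to \infty$, i.e., as $G$ avoids larger and larger finite sets.

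Having this, unconditional convergence follows by the Cauchy-net criterion: given an increasing exhaustive sequence $\seq{F_N}{N\in\NN}$ of finite subsets of $P$, for $N > M$ the difference $\sum_{\lambda \in F_N} T_\lambda f - \sum_{\lambda \in F_M} T_\lambda f = \sum_{\lambda \in F_N \setminus F_M} T_\lambda f$ has $L^p$-norm at most $\epsilon(F_N \setminus F_M)\norm{f}_{L^p} \leq \epsilon_M \norm{f}_{L^p}$ with $\epsilon_M \to 0$, because $F_N \setminus F_M$ is contained in the complement of $F_M$; this gives a Cauchy sequence in $L^p$, and the limit is independent of the choice of exhaustion by the usual diagonal argument comparing two exhaustions. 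Combined with the uniform operator bound, this is precisely unconditional convergence in the strong operator topology.

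The main obstacle I anticipate is the $L^{p_0}$ endpoint: turning the hypotheses \eqref{eq:30.1}, \eqref{eq:30.2}, \eqref{eq:30.4} into a workable weak-type estimate for the partial sums in this two-parameter, building-theoretic setting requires carefully adapting the Duoandikoetxea--Rubio de Francia square-function-free argument — one decomposes $f$ using the reproducing formula, handles the ``diagonal'' contribution $\abs{\lambda' - \lambda}$ small by the maximal bound \eqref{eq:30.4} after dominating $\abs{D_{\lambda'}^\star f}$ pointwise, and the ``off-diagonal'' contribution by the $L^2$ decay \eqref{eq:30.2}, \eqref{eq:30.3} interpolated against the trivial $L^1$ bound; keeping the geometric decay in $\abs{\mu - \lambda}$ genuinely summable over the lattice $P$, and ensuring the maximal function appearing is one already controlled (via Theorem~\ref{th:1}), is where the bookkeeping is delicate.
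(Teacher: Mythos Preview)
Your overall strategy---insert the Calder\'on reproducing formula from Proposition~\ref{prop:2}, exploit the almost-orthogonality \eqref{eq:30.3} at $L^2$, use the maximal hypothesis \eqref{eq:30.4} at $L^{p_0}$, and interpolate---is indeed the paper's strategy. But two steps in your execution do not work as written, and in each case the paper does something genuinely different.

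\textbf{The Cauchy step.} You claim that interpolation yields an \emph{operator-norm} decay $\norm{\sum_{\lambda \in G} T_\lambda f}_{L^p} \lesssim \epsilon(G)\norm{f}_{L^p}$ with $\epsilon(G)\to 0$ as $G$ avoids large finite sets. This is false: at $L^2$ the Cotlar--Stein estimate \eqref{eq:30.2} gives only a \emph{uniform} bound on $\sum_{\lambda\in G}T_\lambda$; the norm need not tend to zero (translation-invariance of the hypotheses already shows no decay in $\min_{\lambda\in G}\abs{\lambda}$ is possible). What Cotlar--Stein gives is strong convergence on $L^2$, i.e.\ $\norm{\sum_{\lambda\in G}T_\lambda f}_{L^2}\to 0$ for each fixed $f\in L^2$, and that is $f$-dependent, so you cannot feed it into Marcinkiewicz interpolation. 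The paper instead first proves uniform $L^p$-boundedness of the partial sums for all $p\in(p_0,2]$, and then, for $f\in L^p\cap L^2$, uses log-convexity of $L^q$-norms: with $\tilde p=(p+p_0)/2$ and a suitable $a>0$,
\[
\norm{V_M f-V_N f}_{L^p}^p\le \norm{V_M f-V_N f}_{L^2}^{a}\,\norm{V_M f-V_N f}_{L^{\tilde p}}^{\,p-a},
\]
where the first factor tends to zero by Cotlar--Stein and the second is uniformly bounded. Density then handles general $f$.

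\textbf{The $L^{p_0}$ endpoint and the uniform $L^p$ bound.} Your sketch of the endpoint (``Calder\'on--Zygmund / good-$\lambda$ machinery'', diagonal versus off-diagonal) is not what is actually needed, and no Calder\'on--Zygmund theory is used. The paper's key device is to pass to a \emph{vector-valued} operator: after applying $\calS$ on the left and reindexing, the problem reduces to bounding, for fixed $\gamma,\gamma'\in P$, the map
\[
\mathcal{T}\vec f=\big(D_{\lambda+\gamma+\gamma'}T_{\lambda+\gamma}D_\lambda f_\lambda\big)_{\lambda\in P}
\]
on $L^p(\ell^2(P))$ with norm $\lesssim q^{-\delta_p(\abs{\gamma}+\abs{\gamma'})}$. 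This is obtained by a chain of interpolations: \eqref{eq:30.1} gives an $L^1(\ell^1)$ bound, \eqref{eq:30.3} gives an $L^2(\ell^2)$ bound with decay, and interpolation yields $L^{p_0}(\ell^{p_0})$ with decay; separately, \eqref{eq:30.4} together with the pointwise domination $\abs{D_\lambda g}\lesssim L^*R^*(\abs{g})$ gives an $L^{p_0}(\ell^\infty)$ bound (no decay); interpolating $L^{p_0}(\ell^{p_0})$ against $L^{p_0}(\ell^\infty)$ gives $L^{p_0}(\ell^2)$ with decay, and finally interpolating against $L^2(\ell^2)$ gives the desired $L^p(\ell^2)$ bound. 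Summing the resulting geometric series in $\gamma,\gamma'$ yields the uniform $L^p$ bound on the partial sums. Your proposal does not set up this vector-valued operator or this interpolation chain, and without it the bookkeeping you describe does not close.
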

\begin{proof}
First, recall that the Cotlar--Stein Lemma (see e.g. \cite{steinone})
states that \eqref{eq:30.2}~implies the unconditional convergence of
$\sum_{\lambda \in P} T_\lambda$ in the strong operator topology on
$L^2(\Omega_0)$.  Let $\seq{F_N}{N \in \NN}$ be an arbitrary
increasing and exhaustive sequence of finite subsets of~$P$. For $N >
0$ we set
\begin{align*}
  V_N &= \sum_{\mu \in F_N} T_\mu, &
  I_N &= \sum_{\lambda \in F_N} D_\lambda D_\lambda^\star.
\end{align*}
By \eqref{eq:30.1}, \eqref{eq:30.2} and interpolation, each~$T_\mu$
is bounded on~$L^p$ for $p\in [1,2]$ and the same holds for the
finite sum~$V_N$.  We consider $f \in L^p(\Omega_0)$ for $p \in
(p_0, 2)$. By Proposition~\ref{prop:2} and Theorem \ref{th:5}, we
have
\begin{align*}
  \big\lVert V_M I_N (f) \big\rVert_{L^p} 
  & \lesssim 
  \big\lVert 
  \calS \big(V_M I_N(f) \big) 
  \big\rVert_{L^p}
  =\Big\lVert\Big(
  \sum_{\mu \in F_M} 
  \sum_{\lambda' \in F_N}
  D_\lambda T_\mu D_{\lambda'} D_{\lambda'}^\star f : \lambda \in P \Big)
  \Big\rVert_{L^p(\ell^2)}\\
  &
	=\Big\lVert \Big(
  \sum_{\gamma, \gamma' \in P}
  \ind{F_N}(\lambda + \gamma + \gamma')
  \ind{F_M}(\lambda + \gamma)
  D_\lambda T_{\lambda+\gamma} D_{\lambda+\gamma+\gamma'} 
  D_{\lambda+\gamma+\gamma'}^\star f
  : \lambda \in P \Big)
  \Big\rVert_{L^p(\ell^2)}\\
  &
  \leq
  \sum_{\gamma, \gamma' \in P}
  \big\lVert \seq{
  \ind{F_N}(\lambda+\gamma+\gamma') 
  \ind{F_M}(\lambda+\gamma)
  D_\lambda T_{\lambda+\gamma} D_{\lambda+\gamma+\gamma'} 
  D_{\lambda+\gamma+\gamma'}^\star f}{\lambda \in P}
  \big\rVert_{L^p(\ell^2)}.
\end{align*}
Finally, by change of variables we get
\begin{equation*}
  \big\lVert
  V_M I_N(f) 
  \big\rVert_{L^p} 
  \lesssim
  \sum_{\gamma,\gamma' \in P}
  \big\lVert 
  \seq{
    D_{\lambda+\gamma+\gamma'} T_{\lambda+\gamma} D_\lambda D_\lambda^\star f}
  {\lambda \in F_N}
  \big\rVert_{L^p(\ell^2)}.
\end{equation*}
Assuming there is $\delta_p > 0$ such that
\begin{equation}
  \label{eq:20}
  \norm{\seq{D_{\lambda+\gamma+\gamma'} 
  T_{\lambda+\gamma} D_\lambda f_\lambda}{\lambda \in P}}_{L^p(\ell^2)}
  \lesssim q^{-\delta_p (\abs{\gamma} + \abs{\gamma'})} 
  \norm{\seq{f_\lambda}{\lambda \in P}}_{L^p(\ell^2)}
\end{equation}
we can estimate
\begin{equation}
\begin{aligned}
  \label{eq:53}
  \big\lVert
  V_M I_N(f)
  \big\rVert_{L^p} 
  & \lesssim 
  \sum_{\gamma, \gamma' \in P} q^{-\delta_p(\abs{\gamma} + \abs{\gamma'})}
  \norm{\seq{D_\lambda^\star f}{\lambda \in F_N}}_{L^p(\ell^2)} \\
  & \lesssim
  \Big\lVert 
  \Big(\sum_{\lambda \in F_N} (D_\lambda^\star f)^2\Big)^{1/2}
  \Big\rVert_{L^p}.
\end{aligned}
\end{equation}
Theorem~\ref{th:5}, Proposition~\ref{prop:2} and~\eqref{eq:53} imply
that the~$V_M$ are uniformly bounded on~$L^p$.
	
For the proof of \eqref{eq:20}, we consider an operator $\mathcal{T}$
defined for $\vec{f} \in L^p\big(\pi, \ell^2(P)\big)$ by
\begin{equation*}
  \mathcal{T} {\vec f} = \seq{D_{\lambda+\gamma+\gamma'} 
  T_{\lambda+\gamma} D_\lambda f_\lambda}{\lambda \in P}.
\end{equation*}
Since $\pnorm{D_\lambda}{1} \lesssim 1$ and $\pnorm{T_\mu}{1} \lesssim 1$ we have
\begin{equation*}
    \big\lVert \mathcal{T} {\vec f\;}
    \big\rVert_{L^1(\ell^1)} \lesssim 
    \big\lVert \vec f\; \big\rVert_{L^1(\ell^1)}.
\end{equation*}
Also, by \eqref{eq:30.3}, we can estimate
\begin{equation*}
    \big\Vert 
    \mathcal{T} \vec f\;
    \big\rVert_{L^2(\ell^2)}^2
    = 
    \sum_{\lambda \in P} \norm{D_{\lambda+\gamma+\gamma'} T_{\lambda+\gamma} 
    D_\lambda f_\lambda}_{L^2}^2
    \lesssim
    q^{-\delta (\abs{\gamma}+\abs{\gamma'})} \sum_{\lambda \in P} \norm{f_\lambda}_{L^2}^2.
\end{equation*}
Therefore, using interpolation between $\lell{1}{1}$ and $\lell{2}{2}$
we obtain that there is $\delta' > 0$ such that
\begin{equation*}
  \big\lVert
  \mathcal{T} {\vec f}\;
  \big\rVert_{L^{p_0}(\ell^{p_0})} \lesssim q^{-\delta' (\abs{\gamma}+\abs{\gamma'})} 
  \big\lVert
  \vec f\;
  \big\rVert_{L^{p_0}(\ell^{p_0})}.
\end{equation*}
Because $\abs{D_\lambda g} \lesssim L^* R^*( \abs{g} )$, and because
Theorem~\ref{th:1} says that~$L^*$ and~$R^*$ are bounded on~$L^{p_0}$,
we know that $\seq{D_\lambda}{\lambda \in P}$ is bounded
on~$L^{p_0}(\pi, \ell^\infty(P))$.  Of course the same holds for
$\seq{D_{\lambda+\gamma+\gamma'}}{\lambda \in P}$.
Hence, by \eqref{eq:30.4} we get
\begin{equation*}
  \big\lVert 
  \mathcal{T} {\vec f}\;
  \big\rVert_{L^{p_0}(\ell^\infty)} 
  \lesssim \big\lVert \vec f\;\big\rVert_{L^{p_0}(\ell^\infty)}.
\end{equation*}
Next, interpolating between $\lell{p_0}{p_0}$ and
$\lell{p_0}{\infty}$ gives a~$\delta'' > 0$ such that
\begin{equation*}
  \big\lVert
  \mathcal{T} {\vec f}\;
  \big\rVert_{L^{p_0}(\ell^2)}
  \lesssim 
  q^{-\delta''(\abs{\gamma} + \abs{\gamma'})}
  \big\lVert \vec f\;\big\rVert_{L^{p_0}(\ell^2)}.
\end{equation*}
Finally, interpolating between $\lell{p_0}{2}$ and $\lell{2}{2}$ we obtain
\eqref{eq:20}.

To finish the proof, we are going to show that $\seq{V_N f}{N \in
  \NN}$ is a Cauchy sequence in $L^p(\Omega_0)$. Let us consider $g
\in L^p(\Omega_0) \cap L^2(\Omega_0)$. Setting
\begin{equation*}
  a = \frac{2(p-p_0)}{4-p-p_0},\quad \text{and} \quad \tilde{p} =
  \frac{p + p_0}{2} 
\end{equation*}
and using the log-convexity of the $L^q$-norms we get 
\begin{equation*}
\big\lVert V_M g - V_N g \big\rVert_{L^p}^p
\leq
\big\lVert V_M g - V_N g \big\rVert_{L^2}^a
\big\lVert V_M g - V_N g \big\rVert_{L^{\tilde{p}}}^{p-a}.
\end{equation*}
Since $\seq{V_N g}{N \in \NN}$ converges in $L^2(\Omega_0)$ and is
uniformly bounded on $L^{\tilde{p}}(\Omega_0)$ it is a Cauchy sequence
in $L^p(\Omega_0)$. For an arbitrary $f \in L^p(\Omega_0)$ use the
density of $g$'s as above.  We have
\begin{equation*}
  \norm{V_M f - V_N f}_{L^p} \lesssim  \norm{f - g}_{L^p} + \norm{V_N g - V_M g}_{L^p}.
\end{equation*}
Thus $\seq{V_N f}{N\in\NN}$ also converges, and this finishes the
proof of the theorem.
\end{proof}

\section{Double Differences}
The martingale transforms are expressed in terms of double differences
defined for a martingale $f = \seq{f_\lambda}{\lambda \in P}$ as
\begin{equation*}
  d_\lambda f= f_{\lambda} - f_{\lambda-\lambda_1} - f_{\lambda - \lambda_2} 
    + f_{\lambda-\lambda_1 - \lambda_2}.
\end{equation*}
\subsection{Martingale transforms}
The following proposition is our key tool.
\begin{proposition}
	\label{prop:1}
	Let $f \in L^2(\Omega_0)$ and $\lambda \in P$. If $f_{\lambda - j \lambda_1} = 0$
	for $j \in \NN$ then for each $k \geq j$
	$$
	\norm{\EE[f_\lambda | \mathcal{F}_{\lambda-k(\lambda_1-\lambda_2)}]}_{L^2} 
	\leq 2 q^{-(k-j+1)/2}\norm{f_\lambda}_{L^2}.
	$$
	Analogously, for $\lambda_1$ and $\lambda_2$ exchanged.
\end{proposition}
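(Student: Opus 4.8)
The plan is to reduce everything to the self-adjoint operator $P_k=\EE_\lambda\EE_{\mu_k}\EE_\lambda$, where $\mu_k=\lambda-k(\lambda_1-\lambda_2)$, and in fact to prove the slightly stronger bound $\norm{\EE[f_\lambda|\mathcal{F}_{\mu_k}]}_{L^2}\le q^{-(k-j+1)/2}\norm{f_\lambda}_{L^2}$ (we may assume $j\ge 1$, since $j=0$ forces $f_\lambda=0$). First I would unwind the hypothesis: since $\lambda-j\lambda_1\preceq\lambda$ we have $\EE_{\lambda-j\lambda_1}f_\lambda=f_{\lambda-j\lambda_1}=0$, and since $\lambda-m\lambda_1\preceq\lambda-j\lambda_1$ for $m\ge j$, also $\EE_{\lambda-m\lambda_1}f_\lambda=0$ for every $m\ge j$. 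Because $P_k$ is a composition of orthogonal projections it is positive, self-adjoint and of norm at most $1$, and because $\EE_{\mu_k}$ is an orthogonal projection and $\EE_\lambda f_\lambda=f_\lambda$,
\[
  \norm{\EE[f_\lambda|\mathcal{F}_{\mu_k}]}_{L^2}^2=\sprod{f_\lambda}{\EE_{\mu_k}f_\lambda}=\sprod{f_\lambda}{P_kf_\lambda},
\]
so it suffices to control $\sprod{f_\lambda}{P_kf_\lambda}$.

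The crucial observation is that $\mu_k=(\lambda-k\lambda_1)+k\lambda_2$ and $\lambda=(\lambda-k\lambda_1)+k\lambda_1$, so that $\EE_{\mu_k}\EE_\lambda$ is exactly the operator $\EE_{\nu+k\lambda_2}\EE_{\nu+k\lambda_1}$ with base vertex $\nu=\lambda-k\lambda_1$. The identity derived at the end of the proof of Lemma~\ref{lem:2}, which, like all the lemmas above, holds with an arbitrary base vertex in place of the origin, then reads, for $k\ge 1$,
\[
  \EE_{\mu_k}\EE_\lambda\EE_{\mu_k}\EE_\lambda=q^{-1}\EE_{\mu_k}\EE_\lambda\EE_{\mu_{k-1}}\EE_\lambda+(1-q^{-1})\EE_{\lambda-k\lambda_1}.
\]
Composing on the left with $\EE_\lambda$ and using $\EE_\lambda\EE_{\lambda-k\lambda_1}=\EE_{\lambda-k\lambda_1}$ turns this into $P_k^2=q^{-1}\EE_\lambda\EE_{\mu_k}P_{k-1}+(1-q^{-1})\EE_{\lambda-k\lambda_1}$. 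Evaluating at $f_\lambda$, observing that $P_{k-1}f_\lambda$ is $\mathcal{F}_\lambda$-measurable so that $\EE_\lambda\EE_{\mu_k}P_{k-1}f_\lambda=P_kP_{k-1}f_\lambda$, and dropping the last term because $\EE_{\lambda-k\lambda_1}f_\lambda=0$ as soon as $k\ge j$, one gets the recursion
\[
  P_k^2f_\lambda=q^{-1}P_kP_{k-1}f_\lambda,\qquad k\ge j.
\]

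From here the estimate is immediate. For $k\ge j$,
\[
  \norm{P_kf_\lambda}_{L^2}^2=\sprod{f_\lambda}{P_k^2f_\lambda}=q^{-1}\sprod{P_kf_\lambda}{P_{k-1}f_\lambda}\le q^{-1}\norm{P_kf_\lambda}_{L^2}\norm{P_{k-1}f_\lambda}_{L^2},
\]
hence $\norm{P_kf_\lambda}_{L^2}\le q^{-1}\norm{P_{k-1}f_\lambda}_{L^2}$. Iterating this down to index $j-1$ and using $\norm{P_{j-1}}\le 1$ yields $\norm{P_kf_\lambda}_{L^2}\le q^{-(k-j+1)}\norm{f_\lambda}_{L^2}$, and then
\[
  \norm{\EE[f_\lambda|\mathcal{F}_{\mu_k}]}_{L^2}^2=\sprod{f_\lambda}{P_kf_\lambda}\le\norm{f_\lambda}_{L^2}\norm{P_kf_\lambda}_{L^2}\le q^{-(k-j+1)}\norm{f_\lambda}_{L^2}^2,
\]
which even improves the constant $2$ to $1$; the statement with $\lambda_1$ and $\lambda_2$ interchanged is proved in the same way. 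The only step requiring care is the choice of base vertex $\nu=\lambda-k\lambda_1$: it is exactly this choice that makes the error term of the Lemma~\ref{lem:2} identity equal to $\EE_{\lambda-k\lambda_1}$, and that operator is killed by the assumption $f_{\lambda-j\lambda_1}=0$ precisely when $k\ge j$. After that there is no genuine obstacle — the rest is two applications of Cauchy--Schwarz.
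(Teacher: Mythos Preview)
Your argument is correct, and it genuinely differs from the paper's. The paper first treats the case $j=1$: for $k=1$ it reads off the bound from identity~\eqref{eq:1} of Lemma~\ref{lem:1}, and for larger $k$ it factors $\EE_{\mu_k}\EE_\lambda$ through the intermediate point $\mu_1$ via Lemma~\ref{lem:4}, checks via Lemma~\ref{lem:3} that the hypothesis propagates, and inducts on~$k$. For general $j$ it then telescopes $f_\lambda=\sum_{i=0}^{j-1}(f_{\lambda-i\lambda_1}-f_{\lambda-(i+1)\lambda_1})$, applies the $j=1$ bound to each summand, and sums the resulting geometric series; this last step is where the constant~$2$ appears.

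You bypass both the induction on $k$ and the telescoping in $j$ by going straight to the operator identity established at the end of the proof of Lemma~\ref{lem:2}, translated to base vertex $\lambda-k\lambda_1$. That identity packages the repeated use of Lemmas~\ref{lem:1}, \ref{lem:4} and~\ref{lem:3} into a single recursion $P_k^2f_\lambda=q^{-1}P_kP_{k-1}f_\lambda$ valid for all $k\ge j$, from which two applications of Cauchy--Schwarz finish the job. The payoff is a cleaner argument and the sharper constant~$1$ in place of~$2$; the cost is that you are quoting a formula that the paper proves only inside another proof rather than as a stated lemma, and that the geometric mechanism (each sideways step contributing a factor $q^{-1/2}$) is somewhat less visible than in the paper's step-by-step induction.
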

\begin{proof}
	Suppose $j = 1$. We are going to show that if $f_{\lambda - \lambda_1} = 0$ then for all $k \geq 1$
	\begin{equation}
		\label{eq:2}
		\norm{\EE[f_\lambda | \mathcal{F}_{\lambda - k(\lambda_1 - \lambda_2)} ]}_{L^2}
		\leq
		q^{-k/2}
		\norm{f_\lambda}_{L^2}.
	\end{equation}
	Indeed, if $k = 1$ then by \eqref{eq:1} of Lemma~\ref{lem:1}
	\begin{align*}
	\big\lVert \EE[f_\lambda|\mathcal{F}_{\lambda-\lambda_1+\lambda_2}] \big\rVert_{L^2}^2
	& =\sprod{\EE[f_\lambda | \mathcal{F}_{\lambda-\lambda_1+\lambda_2} 
	| \mathcal{F}_\lambda]}{f_\lambda} \\
	& = q^{-1} \norm{f_\lambda}^2_{L^2} 
	- q^{-1} \big\lVert \EE[f_\lambda | \mathcal{F}_{\lambda-\lambda_1} \vee 
	\mathcal{F}_{\lambda-\lambda_2}] \big\rVert_{L^2}^2.
	\end{align*}
	If $k > 1$, we use Lemma~\ref{lem:4} to write
	$$
	\EE[f_\lambda | \mathcal{F}_{\lambda-k(\lambda_1-\lambda_2)}] 
	= \EE[f_\lambda |\mathcal{F}_{\lambda-(\lambda_1-\lambda_2)} 
	| \mathcal{F}_{\lambda-k(\lambda_1-\lambda_2)}].
	$$
	Since, by Lemma~\ref{lem:3},
	$$
	\EE[f_\lambda | \mathcal{F}_{\lambda-(\lambda_1-\lambda_2)} 
	| \mathcal{F}_{\lambda-\lambda_1 - (\lambda_1 - \lambda_2)}] 
	= \EE[f_\lambda | \mathcal{F}_{\lambda-\lambda_1} | 
	\mathcal{F}_{\lambda-\lambda_1- (\lambda_1 - \lambda_2)}] = 0
	$$
	we can use induction to obtain
	\begin{align*}
	\norm{\EE[f_\lambda | \mathcal{F}_{\lambda-(\lambda_1-\lambda_2)} 
	| \mathcal{F}_{\lambda-k(\lambda_1-\lambda_2)}]}_{L^2} 
	& \leq q^{-(k-1)/2} 
	\big\lVert \EE[f_\lambda | \mathcal{F}_{\lambda-(\lambda_1-\lambda_2)}] \big\rVert_{L^2}  \\
	& \leq q^{-k/2} \norm{f_\lambda}_{L^2}.
	\end{align*}
	Let us consider $j > 1$. For each $i = 0, 1, \ldots, j-1$ we set
	$$
	g_i = f_{\lambda - i \lambda_1} - f_{\lambda - (i+1) \lambda_1}.
	$$
	By Lemma~\ref{lem:3} and \eqref{eq:2}
	\begin{align*}
	\norm{\EE[g_i | \mathcal{F}_{\lambda-k(\lambda_1 - \lambda_2)}]}_{L^2}
	=\norm{\EE[g_i | \mathcal{F}_{\lambda - k(\lambda_1- \lambda_2) - i \lambda_2}]}_{L^2}
	& \leq q^{-(k-i)/2} \norm{g_i}_{L^2} \\
	& \leq q^{-(k-i)/2} \norm{f_\lambda}_{L^2}.
	\end{align*}
	Hence,
	\begin{align*}
	\big\lVert
	\EE[f_\lambda | \mathcal{F}_{\lambda - k(\lambda_1 - \lambda_2)}]
	\big\rVert_{L^2} 
	& \leq \sum_{i=0}^{j-1} 
	\big\lVert 
	\EE[g_i | \mathcal{F}_{n- k(\lambda_1-  \lambda_2)}]
	\big\rVert_{L^2} \\
	& \leq \sum_{i = 0}^{j-1} q^{-(k-i)/2} \norm{f_\lambda}_{L^2}
	\end{align*}
	which finishes the proof since
	\[
		\sum_{i = 0}^{j-1} q^{i/2} \leq 2 q^{(j-1)/2}. \qedhere
	\]
\end{proof}
We have the following
\begin{proposition}
	\label{prop:3}
	For any $\lambda, \lambda', \mu \in P$ and $m \geq 1$
	\begin{align*}
		\pnorm{D_\lambda d_\mu^m D_{\lambda'}}{2} & \lesssim q^{-\abs{\mu-\lambda}/4} 
		q^{-\abs{\mu-\lambda'}/4},\\
		\pnorm{d_\lambda^m d_\mu^m}{2} & \lesssim q^{-\abs{\lambda-\mu}/2}.
	\end{align*}
\end{proposition}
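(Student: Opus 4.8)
The plan is to derive both estimates from Proposition~\ref{prop:1}, after two soft reductions.

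First, the power~$m$ can be absorbed. The operator $d_\mu$ vanishes on $(\operatorname{ran}\EE_\mu)^\perp$, and on $\operatorname{ran}\EE_\mu$ it equals $\mathrm{id}-P_1-P_2+P_{12}$ with $P_1=\EE_{\mu-\lambda_1}$, $P_2=\EE_{\mu-\lambda_2}$, $P_{12}=\EE_{\mu-\lambda_1-\lambda_2}$ orthogonal projections satisfying $P_{12}\le P_1$ and $P_{12}\le P_2$; since then $0\le P_1+P_2-P_{12}\le 2\,\mathrm{id}$, one gets $\pnorm{d_\mu}{2}\le1$. Similarly every $L_i$ and $R_j$, being a difference of nested orthogonal projections, is an orthogonal projection, so $\pnorm{D_\lambda}{2}\le1$. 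Hence, for the first estimate it is enough to prove the one-sided bound
\[
\pnorm{D_\lambda d_\mu}{2}\lesssim q^{-\abs{\mu-\lambda}/2}.
\]
Indeed, taking adjoints (recall $d_\mu$ is self-adjoint) together with the symmetry $\lambda_1\leftrightarrow\lambda_2$ of the construction gives $\pnorm{d_\mu D_{\lambda'}}{2}\lesssim q^{-\abs{\mu-\lambda'}/2}$; then for $m\ge2$ one writes $D_\lambda d_\mu^m D_{\lambda'}=(D_\lambda d_\mu)\,d_\mu^{m-2}\,(d_\mu D_{\lambda'})$ and uses $\pnorm{d_\mu}{2}\le1$, while for $m=1$ one bounds $\pnorm{D_\lambda d_\mu D_{\lambda'}}{2}$ by $\pnorm{D_\lambda d_\mu}{2}$ and by $\pnorm{d_\mu D_{\lambda'}}{2}$ and takes the geometric mean --- which is exactly where the exponent $1/4$ in the statement comes from.

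To prove the one-sided bound, write $\lambda=i\lambda_1+j\lambda_2$, $\mu=a\lambda_1+b\lambda_2$, so $\abs{\mu-\lambda}=\max\{\abs{a-i},\abs{b-j}\}$, and pass to the adjoint $d_\mu R_jL_i$. The operator $L_i$ produces a function whose conditional expectation one step back in the $\lambda_1$ direction vanishes at every $\lambda_2$-level, and $R_j$ does the same in the $\lambda_2$ direction; meanwhile $d_\mu$ is, up to the lower-order term controlled by~\eqref{eq:23}, a clean difference at level~$\abs\mu$. When $\mu$ and $\lambda$ lie on the same horizontal level, one uses Lemmas~\ref{lem:3}, \ref{lem:4} and~\ref{lem:5} to rewrite $d_\mu R_jL_i$ as $\EE[\,g\mid\mathcal{F}_{\mu-k(\lambda_1-\lambda_2)}\,]$ with $\abs k=\abs{\mu-\lambda}$, for a level-$\abs\mu$ function $g$ with $\EE_{\mu-\lambda_1}g=0$, and Proposition~\ref{prop:1} with vanishing depth~$1$ yields the factor $q^{-\abs{\mu-\lambda}/2}$. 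When $\mu$ and $\lambda$ lie on different levels, one first uses the same lemmas (and identity~\eqref{eq:1}) to move the higher among the expectations occurring in $d_\mu,L_i,R_j$ past the lower ones, collapsing the vertical gap up to remainders of the form quantified by~\eqref{eq:23}, thereby reducing to the equal-level case; running the argument with $\lambda_1$ and $\lambda_2$ interchanged covers the remaining directions.

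For the second estimate, $\pnorm{d_\lambda^m d_\mu^m}{2}\lesssim q^{-\abs{\lambda-\mu}/2}$, one replaces the soft reduction above by an exact vanishing: if $\lambda\preceq\mu-\lambda_1-\lambda_2$, then $\EE_\nu d_\mu=0$ for every $\nu\preceq\mu-\lambda_1-\lambda_2$, so $d_\lambda d_\mu=0$ and hence $d_\lambda^m d_\mu^m=0$; dually $d_\mu^m d_\lambda^m=0$ whenever $\mu\preceq\lambda-\lambda_1-\lambda_2$, and since $\pnorm{d_\lambda^m d_\mu^m}{2}=\pnorm{d_\mu^m d_\lambda^m}{2}$ this disposes of the ``deep'' configurations. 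In the remaining configurations $\lambda$ and $\mu$ are close in at least one coordinate direction, and one normalizes them --- again via Lemmas~\ref{lem:3}--\ref{lem:5} and Lemma~\ref{lem:1}, using the exact vanishing for the correction terms --- to the equal-level case, where Proposition~\ref{prop:1} applies and now yields the full exponent~$1/2$. The main obstacle, in both estimates, is the bookkeeping in these normalizations: the building-geometry lemmas only let one move conditional expectations past one another one step at a time, each step leaving behind the remainder encoded in~\eqref{eq:23}, so the reduction must be organized so that these remainders either telescope or recur in the same form at a strictly lower level, enabling an induction; and one must verify at the end that the function fed into Proposition~\ref{prop:1} really is supported on a single level and really has the required one-step vanishing.
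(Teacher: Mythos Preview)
Your soft reductions are sound: the bound $\pnorm{d_\mu}{2}\le 1$ (via $d_\mu=Q_1-Q_2$ with $Q_1=\EE_\mu-\EE_{\mu-\lambda_1}$, $Q_2=\EE_{\mu-\lambda_2}-\EE_{\mu-\lambda_1-\lambda_2}$ nested-difference projections) is correct, and the geometric-mean trick for $m=1$ is exactly how the paper's exponent $1/4$ arises. Reducing everything to the one-sided estimate $\pnorm{D_\lambda d_\mu}{2}\lesssim q^{-\abs{\mu-\lambda}/2}$ is the right move.

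The gap is that you do not actually prove this one-sided bound. Your claimed rewriting of $d_\mu R_jL_i$ as a single conditional expectation $\EE[g\mid\mathcal{F}_{\mu-k(\lambda_1-\lambda_2)}]$ cannot be right as stated: the range of $d_\mu$ is contained in $\operatorname{ran}\EE_\mu$ but certainly not in $\operatorname{ran}\EE_{\mu-k(\lambda_1-\lambda_2)}$, so $d_\mu R_jL_i f$ is not of that form. More broadly, the program of ``moving expectations past each other and collapsing the vertical gap up to remainders from~\eqref{eq:23}'' is not a proof; \eqref{eq:23} produces a correction term $(1-q^{-1})\EE_\lambda$ at each step, and you give no mechanism that prevents these from swamping the decay.

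The paper's route is different and avoids this bookkeeping entirely. The key observation you are missing is that $\EE_\nu d_\mu=0$ whenever the \emph{level} $\sprod{\nu}{\alpha_0}$ drops by at least~$2$ below $\sprod{\mu}{\alpha_0}$ (not merely when $\nu\preceq\mu-\lambda_1-\lambda_2$): this follows from Lemma~\ref{lem:3}, which gives for instance $\EE_{\mu-2\lambda_2}\EE_{\mu-\lambda_1}=\EE_{\mu-2\lambda_2}\EE_{\mu-\lambda_1-\lambda_2}$. With this in hand one works directly with $D_\lambda d_\mu=L_iR_j d_\mu$ (no adjoint), applies Proposition~\ref{prop:1} once to $R_j d_\mu f$ (vanishing at depth~$2$ in the $\lambda_2$-direction), then observes via Lemma~\ref{lem:5} that $R_j d_\mu f$ itself lives at a fixed level and again has vanishing two steps down in the $\lambda_1$-direction, so Proposition~\ref{prop:1} applies a second time to $L_i(R_j d_\mu f)$. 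This two-pass argument yields $q^{-\sprod{\mu-\lambda}{\alpha_0}/2}q^{-\sprod{\mu-\lambda}{\alpha_2}/2}$ in the relevant sector, and symmetry covers the rest. For the second estimate, the same level-drop observation immediately reduces to $\abs{\sprod{\mu-\lambda}{\alpha_0}}\le 1$, after which a single application of Proposition~\ref{prop:1} suffices; your weaker vanishing (only for $\nu\preceq\mu-\lambda_1-\lambda_2$) leaves an unbounded family of ``remaining configurations'' that you do not handle.
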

\begin{proof}
	We observe that for $f \in L^2(\Omega_0)$, $d_\mu f \in L^2(\pi, \mathcal{F}_\mu)$ and 
	\begin{equation}
		\label{eq:22}
		\EE[d_\mu f| \mathcal{F}_{\nu}]=0
	\end{equation}
	whenever $\sprod{\nu}{\alpha_0} \leq \sprod{\mu}{\alpha_0} - 2$. For the proof it is enough to
	analyze the case $\nu = \mu - 2 \lambda_2$. By Lemma~\ref{lem:3}, we can write
	$$
	\EE[f_{\mu-\lambda_1} | \mathcal{F}_{\mu - 2 \lambda_2}] 
	= \EE[f_{\mu-\lambda_1} | \mathcal{F}_{\mu-\lambda_1-\lambda_2} 
	|\mathcal{F}_{\mu-2\lambda_2}] = \EE[f_{\mu-\lambda_1-\lambda_2} 
	| \mathcal{F}_{\mu-2\lambda_2}].
	$$
	Suppose $\lambda = i \lambda_1 + j \lambda_2$. Let us consider $R_j d_\mu$. 
	If $j \geq \sprod{\mu}{\alpha_2} + 1$ then $R_j d_\mu f = 0$. For $j \leq \sprod{\mu}{\alpha_2} - 2$,
	in view of \eqref{eq:22} we can use Proposition~\ref{prop:1} to estimate
	\begin{equation}
		\label{eq:15}
		\norm{R_j d_\mu f}_{L^2} \lesssim q^{-\sprod{\mu - \lambda}{\alpha_2}/2} \norm{d_\mu f}_{L^2}.
	\end{equation}
	Next, if $\sprod{\lambda}{\alpha_0} \geq \sprod{\mu}{\alpha_0} + 2$ then $D_\lambda d_\mu f = 0$, because
	$d_\mu f$ is $\calF_\mu$-measurable. For $\sprod{\lambda}{\alpha_0} \leq \sprod{\mu}{\alpha_0} - 4$ and
	$\sprod{\lambda}{\alpha_2} \leq \sprod{\mu}{\alpha_2}$, by Lemma \ref{lem:5} we can write
	$D_\lambda d_\mu f = L_i g$ where 
	$$
	g = \EE[R_j d_\mu f | \mathcal{F}_{\nu}]
	$$
	and $\nu = (\sprod{\mu}{\alpha_0} - j)\lambda_1 + j \lambda_2$. By Lemma \ref{lem:5}, we have
	\[
		R_j d_\mu f = \EE[d_\mu f | \calF_\nu] - \EE[d_\mu f | \calF_{\nu+\lambda_1-\lambda_2}].
	\]
	We notice that by Lemma \ref{lem:3} and \eqref{eq:22}
	$$
	\EE[d_\mu f | \mathcal{F}_{\nu} | \mathcal{F}_{\nu - 2 \lambda_1}] = 
	\EE[d_\mu f | \mathcal{F}_{\mu - 2\lambda_2} | \mathcal{F}_{\nu - 2 \lambda_1}] = 0.
	$$
	Similarly, one can show 
	$$
	\EE[d_\mu f | \mathcal{F}_{\nu + \lambda_1 - \lambda_2} | \mathcal{F}_{\nu - 2\lambda_1}] = 0.
	$$
	Therefore, $\EE[g | \mathcal{F}_{\nu - 2\lambda_1}] = 0$. Now, by Proposition~\ref{prop:1},
	we obtain
	\begin{equation}
		\label{eq:25}
		\norm{L_i g}_{L^2} \lesssim q^{-\sprod{\nu-\lambda}{\alpha_0}/2} \norm{R_j d_\mu f}_{L^2}.
	\end{equation}
	Combining \eqref{eq:25} with \eqref{eq:15} we get
	\begin{equation}
		\label{eq:16}
		\norm{D_\lambda d_\mu f}_{L^2} 
		\lesssim 
		q^{-\sprod{\mu - \lambda}{\alpha_0}/2}
		q^{-\sprod{\mu - \lambda}{\alpha_2}/2} 
		\norm{d_\mu f}_{L^2}
	\end{equation}
	since $\sprod{\nu}{\alpha_0} = \sprod{\mu}{\alpha_0}$. By analogous reasoning one can show the corresponding
	norm estimates for $D_{\lambda'}^\star d_\mu$. Hence, taking adjoint
	\begin{equation}
		\label{eq:17}
		\norm{d_\mu D_{\lambda'} f}_{L^2} 
		\lesssim 
		q^{-\sprod{\mu - \lambda'}{\alpha_0}/2} 
		q^{-\sprod{\mu - \lambda'}{\alpha_2}/2} 
		\norm{f}_{L^2}.
	\end{equation}
	Finally, \eqref{eq:16} and \eqref{eq:17} allow us to conclude the proof of the first
	inequality.

	For the second, we may assume $0 \leq \sprod{\mu - \lambda}{\alpha_0} \leq 1$. Suppose 
	$\sprod{\mu - \lambda}{\alpha_0} = 0$ and 
	$\sprod{\mu - \lambda}{\alpha_2} \geq 2$. Since 
	$d_\mu f \in L^2(\pi, \mathcal{F}_\mu)$, by \eqref{eq:22} and Proposition~\ref{prop:1}
	$$
	\norm{\EE[d_\mu f| \mathcal{F}_\lambda]}_{L^2} 
	\lesssim 
	q^{-\sprod{\mu - \lambda}{\alpha_2}/2} 
	\norm{d_\mu f}_{L^2}.
	$$
	Similarly, we deal with the case $\sprod{\mu - \lambda}{\alpha_0} = 1$. We can assume 
	$\sprod{\mu - \lambda}{\alpha_2} \geq 1$. By Lemma~\ref{lem:3}, we have 
	$$
	\EE[d_\mu f | \mathcal{F}_\lambda] 
	= \EE[d_\mu f | \mathcal{F}_{\mu-\lambda_2} | \mathcal{F}_\lambda]
	= \EE[f_{\lambda-\lambda_1-\lambda_2} - f_{\lambda-\lambda_1} | \mathcal{F}_\lambda].
	$$
	Hence, by Proposition~\ref{prop:1}
	\[
	\norm{\EE[d_\mu f| \mathcal{F}_\lambda]}_{L^2} 
	\lesssim 
	q^{-\sprod{\mu - \lambda}{\alpha_2}/2} \norm{f}_{L^2}. 
	\qedhere
	\]
\end{proof}

Let $\seq{a_\lambda}{\lambda \in P}$ be an uniformly bounded \emph{predictable} family of
functions, i.e. each function $a_\lambda$ is measurable with respect to $\mathcal{F}_{\lambda-\lambda_1-\lambda_2}$ and
$$
\sup_{\omega \in \Omega_0} \abs{a_\lambda(\omega)} \leq M.
$$
Predictability is the condition needed to ensure that 
$d_\lambda \big(a_\lambda f\big) = a_\lambda d_\lambda f$. By Theorem~\ref{th:6},
Theorem~\ref{th:1}, Proposition~\ref{prop:3} and duality when $p > 2$, we get
\begin{theorem}
	\label{th:4}
	For each $p \in (1, \infty)$ and $m \in \NN$ the series
	$$
	\sum_{\lambda \in P} a_\lambda d_\lambda^m
	$$
	converges unconditionally in the strong operator topology for the operators on $L^p(\Omega_0)$, and
	defines the operator with norm bounded by a constant multiply of
	$$
	\sup_{\lambda \in P} \sup_{\omega \in \Omega_0} \abs{a_\lambda(\omega)}.
	$$
\end{theorem}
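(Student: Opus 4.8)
The plan is to exhibit the operators $T_\lambda = a_\lambda d_\lambda^m$ (the composition of $d_\lambda^m$ with multiplication by $a_\lambda$) as a family to which Theorem~\ref{th:6} applies, which settles the range $p\in(1,2]$, and then to reach $p\in(2,\infty)$ by duality. After rescaling the coefficients it is enough to treat $M:=\sup_{\lambda\in P}\sup_{\omega\in\Omega_0}\abs{a_\lambda(\omega)}\le1$ and to show that $\sum_{\lambda\in P}T_\lambda$ is bounded on $L^p$ with norm controlled by a constant depending only on $p$, $m$ and the building. Two elementary facts drive the argument. First, $\calF_{\lambda-\lambda_1-\lambda_2}$ is a sub-$\sigma$-field of each of $\calF_\lambda$, $\calF_{\lambda-\lambda_1}$ and $\calF_{\lambda-\lambda_2}$ (by Proposition~\ref{prop:0}); hence the predictable function $a_\lambda$ is measurable with respect to all four $\sigma$-fields entering $d_\lambda$, so $\EE_\nu(a_\lambda h)=a_\lambda\,\EE_\nu h$ for each such $\nu$, and multiplication by $a_\lambda$ commutes with $d_\lambda$ and with $d_\lambda^m$. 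Second, every $\EE_\nu$ is an orthogonal projection on $L^2(\Omega_0)$, so $d_\lambda$, and hence $d_\lambda^m$, is self-adjoint there; together with the first fact this gives $T_\lambda^\star = d_\lambda^m\bar a_\lambda = \bar a_\lambda d_\lambda^m$, an operator of exactly the same type (predictable, uniformly bounded coefficients). We shall also use freely that $d_\lambda^m g$, being of the form $d_\lambda(\,\cdot\,)$, is $\calF_\lambda$-measurable and satisfies the vanishing relation~\eqref{eq:22}, together with the crude bounds $\pnorm{d_\lambda}{1}\le4$, $\norm{d_\lambda}_{L^2\to L^2}\le4$ and $\norm{D_\lambda}_{L^2\to L^2}\le1$.

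Conditions~\eqref{eq:30.1}, \eqref{eq:30.2} and~\eqref{eq:30.4} are checked quickly. Since conditional expectations contract $L^1$ and $\abs{a_\lambda}\le1$, we get $\pnorm{T_\lambda}{1}\le4^m$. Moving the multipliers to the outside by the commutation relation, $T_\mu T_\lambda^\star$ and $T_\mu^\star T_\lambda$ are each of the form $b\,d_\mu^m d_\lambda^m\,c$ with $\abs b,\abs c\le1$, so the second estimate of Proposition~\ref{prop:3} gives $\pnorm{T_\mu T_\lambda^\star}{2}+\pnorm{T_\mu^\star T_\lambda}{2}\lesssim q^{-\abs{\mu-\lambda}/2}$. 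For~\eqref{eq:30.4}, observe that $\abs{\EE_\nu g}\le\EE_\nu\abs{g}\le M^*(\abs{g})$ for every $\nu\in P$, whence $\abs{d_\lambda g}\le4\,M^*(\abs{g})$ uniformly in $\lambda$ and, iterating and using monotonicity of $M^*$, $\sup_{\lambda\in P}\abs{d_\lambda^m g}\le4^m(M^*)^m(\abs{g})$; consequently $\sup_{\lambda\in P}\abs{T_\lambda f_\lambda}\le4^m(M^*)^m\big(\sup_{\mu\in P}\abs{f_\mu}\big)$ pointwise, and $m$ applications of Theorem~\ref{th:1} yield the required $L^{p_0}$-bound for every $p_0\in(1,\infty)$.

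The remaining condition~\eqref{eq:30.3} is the one place where predictability must be used in an essential way, because in $D_\lambda T_\mu D_{\lambda'}$ the multiplier $a_\mu$ is sandwiched between $d_\mu^m$ and $D_{\lambda'}$. The commutation relation lets us slide it across: $T_\mu D_{\lambda'}f = a_\mu d_\mu^m D_{\lambda'}f = d_\mu^m\big(a_\mu D_{\lambda'}f\big)$, which is again of the form $d_\mu(\,\cdot\,)$, hence $\calF_\mu$-measurable and annihilated by $\EE_\nu$ whenever $\sprod{\nu}{\alpha_0}\le\sprod{\mu}{\alpha_0}-2$. The $L^2$-estimate established in the proof of Proposition~\ref{prop:3} — that any such function $h$ satisfies $\norm{D_\lambda h}_{L^2}\lesssim q^{-\delta\abs{\mu-\lambda}}\norm h_{L^2}$ for some $\delta>0$ (for the bounded range of $\lambda$ where this is only a triviality, $q^{-\delta\abs{\mu-\lambda}}$ is bounded below, so $\norm{D_\lambda}_{L^2\to L^2}\le1$ suffices) — applied to $h=T_\mu D_{\lambda'}f$ gives $\norm{D_\lambda T_\mu D_{\lambda'}f}_{L^2}\lesssim q^{-\delta\abs{\mu-\lambda}}\,\norm{a_\mu d_\mu^m D_{\lambda'}f}_{L^2}\le q^{-\delta\abs{\mu-\lambda}}\,\norm{d_\mu^m D_{\lambda'}f}_{L^2}$. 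Since $\norm{d_\mu^m D_{\lambda'}f}_{L^2}\le4^{m-1}\norm{d_\mu D_{\lambda'}f}_{L^2}\lesssim q^{-\delta\abs{\mu-\lambda'}}\norm f_{L^2}$ by~\eqref{eq:17}, we conclude $\pnorm{D_\lambda T_\mu D_{\lambda'}}{2}\lesssim q^{-\delta\abs{\mu-\lambda}}q^{-\delta\abs{\mu-\lambda'}}$, which is~\eqref{eq:30.3} (shrinking $\delta$ so that all three bounds hold with a common value). Theorem~\ref{th:6} now applies to $\seq{T_\lambda}{\lambda\in P}$ with any $p_0\in(1,2)$; letting $p_0\downarrow1$, the series $\sum_{\lambda\in P}T_\lambda$ converges unconditionally in the strong operator topology on $L^p(\Omega_0)$ for every $p\in(1,2]$, and (the constants in~\eqref{eq:30.1}--\eqref{eq:30.4} being $\lesssim1$ here) its limit is bounded there with norm $\lesssim1$.

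Finally, for $p\in(2,\infty)$ we dualize. Applying the case just established to the family $\seq{T_\lambda^\star=\bar a_\lambda d_\lambda^m}{\lambda\in P}$ — which is of the same type — shows, via Banach--Steinhaus, that the partial sums $\sum_{\mu\in F_N}T_\mu^\star$ are uniformly bounded on $L^{p'}$ for every $p'\in(1,2]$ and every increasing exhaustion $\seq{F_N}{N\in\NN}$ of $P$. Taking adjoints, the partial sums $V_N=\sum_{\mu\in F_N}T_\mu$ are uniformly bounded on $L^p$ for every $p\in[2,\infty)$; on $L^2(\Omega_0)$ they converge strongly by~\eqref{eq:30.2} and the Cotlar--Stein lemma. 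For $f\in L^p\cap L^2(\Omega_0)$ with $p>2$, choosing $\tilde p\in(p,\infty)$ and using log-convexity of $L^q$-norms, $\norm{V_Nf-V_Mf}_{L^p}\le\norm{V_Nf-V_Mf}_{L^2}^{\theta}\,\norm{V_Nf-V_Mf}_{L^{\tilde p}}^{1-\theta}\to0$ as $M,N\to\infty$, so $\seq{V_Nf}{N\in\NN}$ is Cauchy in $L^p$; the uniform $L^p$-bounds together with density of $L^p\cap L^2$ in $L^p$ extend convergence, and unconditionality, to all of $L^p$ and show the limit has norm $\lesssim1$. Undoing the normalization $M\le1$ yields the asserted bound by a constant multiple of $\sup_{\lambda,\omega}\abs{a_\lambda(\omega)}$. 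The one genuinely non-routine step is the verification of~\eqref{eq:30.3}: one must push the sandwiched multiplier $a_\mu$ out through $d_\mu^m$ using predictability, so that what remains still has the structure — an $\calF_\mu$-measurable function annihilated two levels below $\mu$ — required to invoke the decay estimates behind Proposition~\ref{prop:3}; by comparison the passage to $p>2$ is the standard duality-and-interpolation device already used at the end of the proof of Theorem~\ref{th:6}.
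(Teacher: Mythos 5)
Your proof is correct and follows the same route the paper takes: recognize $T_\lambda = a_\lambda d_\lambda^m$ as a family to which Theorem~\ref{th:6} applies, verify the four hypotheses~\eqref{eq:30.1}--\eqref{eq:30.4} using predictability of $a_\lambda$, the $L^2$-decay estimates from Proposition~\ref{prop:3}, and the maximal bound of Theorem~\ref{th:1}, then handle $p\in(2,\infty)$ by duality exactly as at the end of the proof of Theorem~\ref{th:6}. The paper's own proof is a single sentence citing these ingredients, and your write-up is a faithful expansion of it. The one place where you go beyond a mere citation is the verification of~\eqref{eq:30.3}: the paper invokes Proposition~\ref{prop:3}, which is stated for $D_\lambda d_\mu^m D_{\lambda'}$ without the multiplier, and you correctly observe that the sandwiched $a_\mu$ cannot simply be pulled through $D_\lambda$; your fix --- slide $a_\mu$ through $d_\mu^m$ by predictability, note that $T_\mu D_{\lambda'} f$ is still $\mathcal{F}_\mu$-measurable and killed two levels below~$\mu$, and then invoke the abstract form of the estimate underlying~\eqref{eq:16} together with~\eqref{eq:17} --- is exactly what is needed and is implicit in the paper. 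The only nit is cosmetic: the intermediate estimates~\eqref{eq:16} and~\eqref{eq:17} decay in $\sprod{\mu-\lambda}{\alpha_0}$ and $\sprod{\mu-\lambda}{\alpha_2}$ rather than directly in $\abs{\mu-\lambda}$, so your assertion ``$\norm{D_\lambda h}_{L^2}\lesssim q^{-\delta\abs{\mu-\lambda}}\norm{h}_{L^2}$'' quietly absorbs the same (routine) case analysis that the paper needs to pass from those directional estimates to the $\abs{\cdot}$-decay stated in Proposition~\ref{prop:3}; this is not a gap, but it would be cleaner to phrase the invoked bound in the $\alpha_0$/$\alpha_2$ form and then convert.
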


\subsection{Martingale square function}
For a martingale $f = \seq{f_\lambda}{\lambda \in P}$ there is the natural square function
defined by
$$
S f = \Big(\sum_{\lambda \in P} (d_\lambda f)^2 \Big)^{1/2}.
$$
Although $S$ does not preserve $L^2$ norm we have
\begin{theorem}
	\label{th:2}
	For every $p \in (1, \infty)$ there is $C_p > 0$ such that
	$$
	C_p^{-1} \norm{f}_{L^p} \leq \norm{S f}_{L^p} \leq C_p \norm{f}_{L^p}.
	$$
\end{theorem}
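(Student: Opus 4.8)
The plan is to prove the upper and lower bounds separately. The upper bound $\norm{Sf}_{L^p}\lesssim\norm f_{L^p}$ I would deduce from the martingale transform theorem via Khintchine's inequality. Write $\lambda=i\lambda_1+j\lambda_2$ and let $\seq{r_i}{i\in\ZZ}$, $\seq{r_j}{j\in\ZZ}$ be Rademacher functions; the two parameter Khintchine inequality (see \cite{pal}) gives, pointwise in~$\omega\in\Omega_0$ and for every~$N$,
\[
\Big(\sum_{\abs i,\abs j\le N}\abs{d_{i\lambda_1+j\lambda_2}f(\omega)}^2\Big)^{p/2}
\lesssim
\int_0^1\!\!\int_0^1\Big\lvert\sum_{\abs i,\abs j\le N}r_i(s)r_j(t)\,d_{i\lambda_1+j\lambda_2}f(\omega)\Big\rvert^p\,ds\,dt,
\]
and likewise from below. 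For fixed $s,t$ the family $a_{i\lambda_1+j\lambda_2}=r_i(s)r_j(t)$, truncated to $\abs i,\abs j\le N$, is bounded by~$1$ and, being constant in~$\omega$, predictable, so Theorem~\ref{th:4} (with $m=1$) bounds the operator $\sum_{\abs i,\abs j\le N}a_{i\lambda_1+j\lambda_2}d_{i\lambda_1+j\lambda_2}$ on $L^p(\Omega_0)$ by a constant independent of $N,s,t$. Integrating the above over $\Omega_0$ and then over $s,t$ gives $\norm{S_Nf}_{L^p}\lesssim\norm f_{L^p}$ uniformly in~$N$ for the truncated square function $S_N$, and monotone convergence yields $\norm{Sf}_{L^p}\lesssim\norm f_{L^p}$ for every $p\in(1,\infty)$.

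For the lower bound I would use duality, pairing against the reconstruction $f=\sum_{\lambda\in P}d_\lambda f$, which holds in $L^2(\Omega_0)$: the sum over a rectangle telescopes, and as the rectangle exhausts~$P$ three of the four boundary terms vanish in the limit by reverse martingale convergence — applicable because the atoms of $\mathcal{F}_\lambda$ have $\pi$-measure $q^{-2\sprod\lambda{\alpha_0}}\to\infty$ as $\sprod\lambda{\alpha_0}\to-\infty$ — while the fourth converges to~$f$; the same telescoping gives $\EE_\lambda g=\sum_{\nu\preceq\lambda}d_\nu g$ in~$L^2$. (One may also note that $\seq{d_\lambda}{\lambda\in P}$ satisfies the hypotheses of Theorem~\ref{th:6}: \eqref{eq:30.1} since $d_\lambda$ is a combination of conditional expectations, \eqref{eq:30.2}--\eqref{eq:30.3} being the $m=1$ cases of Proposition~\ref{prop:3}, and \eqref{eq:30.4} following from $\abs{d_\lambda g}\lesssim L^*R^*(\abs g)$ and Theorem~\ref{th:1}; since the $d_\lambda$ are self-adjoint this is exactly the statement that the transpose of $S\colon L^p(\Omega_0)\to L^p\big(\pi,\ell^2(P)\big)$, namely $\seq{g_\lambda}{\lambda\in P}\mapsto\sum_\lambda d_\lambda g_\lambda$, is bounded.) Now take $f\in L^p(\Omega_0)\cap L^2(\Omega_0)$ and $g\in L^{p'}(\Omega_0)\cap L^2(\Omega_0)$ with $1/p+1/p'=1$. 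Because $d_\lambda f$ is $\mathcal{F}_\lambda$-measurable,
\[
\sprod fg=\sum_{\lambda\in P}\sprod{d_\lambda f}{\EE_\lambda g}=\sum_{\lambda\in P}\sum_{\nu\preceq\lambda}\sprod{d_\lambda f}{d_\nu g},
\]
and by \eqref{eq:22} the inner product $\sprod{d_\lambda f}{d_\nu g}$ vanishes when $\sprod\nu{\alpha_0}\le\sprod\lambda{\alpha_0}-2$, so only $\nu\in\{\lambda,\lambda-\lambda_1,\lambda-\lambda_2\}$ survive and
\[
\sprod fg=\sum_{\lambda\in P}\Big(\sprod{d_\lambda f}{d_\lambda g}+\sprod{d_\lambda f}{d_{\lambda-\lambda_1}g}+\sprod{d_\lambda f}{d_{\lambda-\lambda_2}g}\Big).
\]
All rearrangements are legitimate since $\sum_\lambda\norm{d_\lambda f}_{L^2}\norm{d_\nu g}_{L^2}\le\norm{Sf}_{L^2}\norm{Sg}_{L^2}<\infty$. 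By the pointwise Cauchy--Schwarz inequality in $\ell^2(P)$ and translation invariance of the index set, each of the three sums is bounded in absolute value by $\int_{\Omega_0}(Sf)(Sg)\dpi$, hence
\[
\abs{\sprod fg}\le 3\int_{\Omega_0}(Sf)(Sg)\dpi\le 3\norm{Sf}_{L^p}\norm{Sg}_{L^{p'}}\lesssim\norm{Sf}_{L^p}\norm g_{L^{p'}}
\]
by the upper bound already proved, applied at the exponent $p'\in(1,\infty)$. Taking the supremum over~$g$ gives $\norm f_{L^p}\lesssim\norm{Sf}_{L^p}$ for $f\in L^p\cap L^2$, and density together with the boundedness of $S$ extends it to all $f\in L^p(\Omega_0)$.

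Since the analytically substantial ingredients — the almost-orthogonality of Proposition~\ref{prop:3}, the Duoandikoetxea--Rubio de Francia scheme of Theorem~\ref{th:6}, and the maximal inequalities of Theorem~\ref{th:1} — are already available, the proof is essentially bookkeeping. The step requiring genuine care is the reconstruction formula and the companion identity $\EE_\lambda g=\sum_{\nu\preceq\lambda}d_\nu g$, together with the justification of the interchanges of summation leading to the three-sum identity; within this, the one nontrivial analytic point is the reverse martingale convergence $f_\lambda\to 0$ as $\sprod\lambda{\alpha_0}\to-\infty$, which rests on the fact that the atoms of $\mathcal{F}_\lambda$ have $\pi$-measure tending to infinity.
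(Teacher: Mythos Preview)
Your upper bound is exactly the paper's argument: Khintchine plus Theorem~\ref{th:4}.

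Your lower bound is correct but takes a genuinely different route. The paper first establishes the algebraic identity
\[
d_\lambda^4 - d_\lambda^3 - q^{-1} d_\lambda^2 + q^{-1} d_\lambda = 0
\]
(equation~\eqref{eq:48}), derived from Lemma~\ref{lem:1}. This lets one write down an explicit operator $\widetilde{\mathcal{T}}g = \seq{-q d_\lambda^3 g + q d_\lambda^2 g + d_\lambda g}{\lambda\in P}$ with the property that $\mathcal{T}^\star \widetilde{\mathcal{T}} g = \sum_\lambda d_\lambda g = g$; boundedness of $\widetilde{\mathcal{T}}$ on $L^{p'}(\ell^2)$ again comes from Khintchine and Theorem~\ref{th:4} (with $m=1,2,3$), and the duality step is then immediate. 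Your argument bypasses this identity entirely: you use the near-orthogonality~\eqref{eq:22} directly to collapse the double sum $\sum_\lambda\sum_{\nu\preceq\lambda}\sprod{d_\lambda f}{d_\nu g}$ to the three diagonals $\nu\in\{\lambda,\lambda-\lambda_1,\lambda-\lambda_2\}$, and then Cauchy--Schwarz against $Sg$ suffices. This is more elementary and avoids both the discovery of~\eqref{eq:48} and the higher powers $d_\lambda^m$ in Theorem~\ref{th:4}; the paper's approach, by contrast, is more algebraic and yields an explicit left inverse for $\mathcal{T}$.

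One point to tighten: both your argument and the paper's rely on the reconstruction $\sum_\lambda d_\lambda g = g$. Your sketch via telescoping rectangles and reverse martingale convergence is on the right track, but note that for the square $[-N,N]^2$ the two mixed corners $f_{-N-1,N}$ and $f_{N,-N-1}$ sit at fixed level $-1$, so reverse martingale convergence does not apply to them directly. Since Theorem~\ref{th:4} already gives unconditional convergence of $\sum_\lambda d_\lambda$ in the strong operator topology, you are free to choose any exhausting sequence; taking for instance iterated limits (first $b,d\to+\infty$, then $a,c\to-\infty$) reduces the off-corners to $\EE[f\,|\,\mathcal{F}_{a-1,\infty}]$ and $\EE[f\,|\,\mathcal{F}_{\infty,c-1}]$, which are honest reverse martingales. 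Your companion identity $\EE_\lambda g=\sum_{\nu\preceq\lambda}d_\nu g$ is cleaner, since all three boundary corners there have level tending to $-\infty$.
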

\begin{proof}
	We start from proving the identity
	\begin{equation}
		\label{eq:48}
		d_\lambda^4 - d_\lambda^3 - q^{-1} d_\lambda^2 + q^{-1} d_\lambda =0.
	\end{equation}
	Let us notice that
	\begin{align*}
		d_\lambda \EE_{\lambda} &= d_\lambda,
		&d_\lambda \EE_{\lambda-\lambda_1-\lambda_2} &= 0, \\
		d_\lambda \EE_{\lambda-\lambda_2} &= - \EE_{\lambda-\lambda_1} \EE_{\lambda-\lambda_2} 
		+ \EE_{\lambda-\lambda_1-\lambda_2},
		&d_\lambda \EE_{\lambda-\lambda_1} &= -\EE_{\lambda-\lambda_2} \EE_{\lambda-\lambda_1} 
		+\EE_{\lambda-\lambda_1 - \lambda_2}.
	\end{align*}
	Therefore, consecutively we have
	\begin{align}
		\label{eq:46}
		d_\lambda^2 &= d_\lambda + \EE_{\lambda-\lambda_1} \EE_{\lambda-\lambda_2} 
		+ \EE_{\lambda - \lambda_1} \EE_{\lambda - \lambda_1} - 2 \EE_{\lambda-\lambda_1-\lambda_2},\\
		\nonumber
		d_\lambda^3 &= d_\lambda^2 - \EE_{\lambda-\lambda_1} \EE_{\lambda-\lambda_2} 
		\EE_{\lambda - \lambda_1} - \EE_{\lambda-\lambda_2}\EE_{\lambda-\lambda_1}
		\EE_{\lambda- \lambda_2}
		+2\EE_{\lambda-\lambda_1-\lambda_2},\\
		\nonumber
		d_\lambda^4 &= d_\lambda^3 
		+ (\EE_{\lambda-\lambda_1} \EE_{\lambda - \lambda_2})^2 
		+ (\EE_{\lambda-\lambda_2} \EE_{\lambda - \lambda_1})^2 - 2\EE_{\lambda-\lambda_1-\lambda_2}.
	\end{align}
	Hence, by Lemma \ref{lem:1},
	\[
		d_\lambda^4 = d_\lambda^3 + q^{-1} \EE_{\lambda-\lambda_1} \EE_{\lambda - \lambda_2}
		+ q^{-1} \EE_{\lambda-\lambda_2} \EE_{\lambda-\lambda_1} - 2 q^{-1} \EE_{\lambda-\lambda_1-\lambda_2}
	\]
	which together with \eqref{eq:46} implies \eqref{eq:48}.

	Next, we consider an operator $\mathcal{T}$ defined for a function $f \in L^p(\Omega_0)$ by
	$$
	\mathcal{T} f = \seq{d_\lambda f}{\lambda \in P}.
	$$
	We also need an operator $\widetilde{\mathcal{T}}$ acting on $g \in L^{p'}(\Omega_0)$ as
	$$
	\widetilde{\mathcal{T}}g 
	= \seq{-q d_\lambda^3 g + q d_\lambda^2 g + d_\lambda g}{\lambda \in P}.
	$$
	We observe that by two parameter Khinchine's inequality and Theorem~\ref{th:4} we have
	$$
	\big\lVert
	\mathcal{T} f 
	\big\rVert_{L^p(\ell^2)} \lesssim \norm{f}_{L^p}, \quad \text{and}\quad
	\big\lVert
	\widetilde{\mathcal{T}} g
	\big\rVert_{L^{p'}(\ell^2)} \lesssim \norm{g}_{L^{p'}}.
	$$
	The dual operator
	$\mathcal{T}^\star: L^{p'}\big(\pi, \ell^2(\ZZ^2)\big) \rightarrow L^{p'}(\Omega_0)$
	is given by
	$$
	\mathcal{T}^\star \vec{g} = \sum_{\lambda \in P} d_\lambda g_\lambda.
	$$
	Since $\widetilde{\mathcal{T}} g \in L^{p'}\big(\pi, \ell^2(\ZZ^2)\big)$,
	by \eqref{eq:48} and Theorem~\ref{th:4},
	$$
	\mathcal{T}^\star \widetilde{\mathcal{T}} g = \sum_{\lambda \in P} d_\lambda g = g
	$$
	Therefore, by Cauchy--Schwarz and H\"older inequalities
	$$
	\sprod{f}{g} = \sprod{f}{\mathcal{T}^\star \widetilde{\mathcal{T}} g}
	\leq
	\big\lVert
	\mathcal{T} f
	\big\rVert_{L^p(\ell^2)}
	\big\lVert
	\widetilde{\mathcal{T}} g
	\big\rVert_{L^{p'}(\ell^2)}
	\lesssim
	\big\lVert
	\mathcal{T} f
	\big\rVert_{L^p(\ell^2)}
	\norm{g}_{L^{p'}}
	$$
	and since $\norm{\mathcal{T}f}_{L^p(\ell^2)} = \norm{S f}_{L^p}$ the proof is finished.
\end{proof}

Finally, the method of the proof of Theorem \ref{th:6}, together with Theorem \ref{th:4} and Theorem \ref{th:2}
shows the following
\begin{theorem}
	Let $\seq{T_\lambda}{\lambda \in P}$ be a family of operators such that for some
	$\delta > 0$ and $p_0 \in (1, 2)$
	\begin{gather*}
		\pnorm{T_\lambda}{1} \lesssim 1, \\
		\pnorm{T_\mu T_\lambda^\star}{2} \lesssim q^{-\delta \abs{\mu-\lambda}} 
		\quad \text{and} 
		\quad \pnorm{T_\mu^\star T_\lambda}{2} \lesssim q^{-\delta \abs{\mu-\lambda}},\\
		\pnorm{d_\lambda T_\mu d_{\lambda'}}{2} 
		\lesssim q^{-\delta \abs{\lambda-\mu}} q^{-\delta \abs{\lambda'-\mu}},\\
		\big\lVert{\sup_{\lambda \in P} \abs{T_\lambda f_\lambda}}\big\rVert_{L^{p_0}} 
		\lesssim 
		\big\lVert \sup_\lambda \abs{f_\lambda} \big\rVert_{L^{p_0}}.
	\end{gather*}
	Then for any $p \in (p_0, 2]$ the sum $\sum_{\lambda \in P} T_\lambda$ converges
	unconditionally in the strong operator topology for the operators on $L^p(\Omega_0)$.
\end{theorem}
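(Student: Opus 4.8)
The plan is to follow the proof of Theorem~\ref{th:6} essentially line by line, under the dictionary in which $D_\lambda$ is replaced by the double difference $d_\lambda$ and the adjoint $D_\lambda^\star$ is replaced by $\tilde{d}_\lambda = -q\, d_\lambda^3 + q\, d_\lambda^2 + d_\lambda$. Since the identity \eqref{eq:48} gives $d_\lambda \tilde{d}_\lambda = -q\, d_\lambda^4 + q\, d_\lambda^3 + d_\lambda^2 = d_\lambda$, Theorem~\ref{th:4} (applied with $a_\mu \equiv 1$ to $d_\lambda^m$ for $m=1,2,3$) furnishes the substitute for the Calder\'on reproducing formula of Proposition~\ref{prop:2}, namely
\begin{equation*}
  f = \sum_{\lambda \in P} d_\lambda \tilde{d}_\lambda f,
\end{equation*}
the series converging in $L^p(\Omega_0)$ unconditionally. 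The role of Theorem~\ref{th:5} is taken by Theorem~\ref{th:2}, which provides $\norm{f}_{L^p} \lesssim \norm{S f}_{L^p}$, together with the bound $\norm{\seq{\tilde{d}_\lambda f}{\lambda \in P}}_{L^p(\ell^2)} \lesssim \norm{f}_{L^p}$, the latter following from the two parameter Khintchine inequality and Theorem~\ref{th:4}, exactly as for the operator $\widetilde{\mathcal{T}}$ in the proof of Theorem~\ref{th:2}.

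First I would fix an increasing exhaustive sequence $\seq{F_N}{N \in \NN}$ of finite subsets of $P$ and put $V_N = \sum_{\mu \in F_N} T_\mu$ and $\tilde{I}_N = \sum_{\lambda \in F_N} d_\lambda \tilde{d}_\lambda$; note that each $T_\mu$, hence each $V_N$, is bounded on $L^p(\Omega_0)$ for $p \in [1,2]$ by interpolating \eqref{eq:30.1} and \eqref{eq:30.2}. For $f \in L^p(\Omega_0)$ with $p \in (p_0, 2)$, Theorem~\ref{th:2} gives $\norm{V_M \tilde{I}_N f}_{L^p} \lesssim \norm{S(V_M \tilde{I}_N f)}_{L^p}$; expanding the square function and carrying out the changes of variables and the triangle inequalities in $\ell^2$ and $L^p$ just as in the proof of Theorem~\ref{th:6}, this leads to
\begin{equation*}
  \norm{V_M \tilde{I}_N f}_{L^p}
  \lesssim
  \sum_{\gamma, \gamma' \in P}
  \norm{\seq{d_{\lambda + \gamma + \gamma'} T_{\lambda + \gamma} d_\lambda \tilde{d}_\lambda f}{\lambda \in F_N}}_{L^p(\ell^2)}.
\end{equation*}
Hence the task reduces to the analogue of \eqref{eq:20}: there should be $\delta_p > 0$ such that for every $\vec f \in \lell{p}{2}$
\begin{equation*}
  \norm{\seq{d_{\lambda + \gamma + \gamma'} T_{\lambda + \gamma} d_\lambda f_\lambda}{\lambda \in P}}_{L^p(\ell^2)}
  \lesssim q^{-\delta_p(\abs{\gamma} + \abs{\gamma'})} \, \norm{\seq{f_\lambda}{\lambda \in P}}_{L^p(\ell^2)}.
\end{equation*}
Granting this and feeding in $f_\lambda = \tilde{d}_\lambda f$, the geometric series sums to give $\norm{V_M \tilde{I}_N f}_{L^p} \lesssim \norm{\seq{\tilde{d}_\lambda f}{\lambda \in P}}_{L^p(\ell^2)} \lesssim \norm{f}_{L^p}$, uniformly in $M$ and $N$.

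To prove the displayed vector valued estimate I would argue by interpolation, mimicking the second part of the proof of Theorem~\ref{th:6}. Set $\mathcal{T} \vec f = \seq{d_{\lambda + \gamma + \gamma'} T_{\lambda + \gamma} d_\lambda f_\lambda}{\lambda \in P}$. Because $\pnorm{d_\lambda}{1} \lesssim 1$ (a combination of four conditional expectations) and $\pnorm{T_\mu}{1} \lesssim 1$ by \eqref{eq:30.1}, one has $\norm{\mathcal{T} \vec f}_{L^1(\ell^1)} \lesssim \norm{\vec f}_{L^1(\ell^1)}$; the third hypothesis on $\pnorm{d_\lambda T_\mu d_{\lambda'}}{2}$ gives $\norm{\mathcal{T} \vec f}_{L^2(\ell^2)} \lesssim q^{-\delta(\abs{\gamma} + \abs{\gamma'})} \norm{\vec f}_{L^2(\ell^2)}$; and the fourth hypothesis, combined with the fact that $\seq{d_\lambda}{\lambda \in P}$ is bounded on $\lell{p_0}{\infty}$ --- which holds because $\sup_\lambda \abs{d_\lambda f_\lambda} \le 4 M^*\big(\sup_\nu \abs{f_\nu}\big)$ and $M^*$ is bounded on $L^{p_0}(\Omega_0)$ by Theorem~\ref{th:1} --- gives $\norm{\mathcal{T} \vec f}_{L^{p_0}(\ell^\infty)} \lesssim \norm{\vec f}_{L^{p_0}(\ell^\infty)}$. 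Interpolating first between $\lell{1}{1}$ and $\lell{2}{2}$, then between the resulting $\lell{p_0}{p_0}$ bound and $\lell{p_0}{\infty}$, and finally between $\lell{p_0}{2}$ and $\lell{2}{2}$ then produces the required $\delta_p > 0$.

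It remains to pass from uniform boundedness to convergence. Since $\tilde{I}_N \to \mathrm{Id}$ in the strong operator topology on $L^p(\Omega_0)$ by the first paragraph, the estimate above forces the $V_M$ to be uniformly bounded on $L^p(\Omega_0)$; the Cotlar--Stein Lemma together with \eqref{eq:30.2} gives unconditional convergence of $\sum_{\lambda \in P} T_\lambda$ on $L^2(\Omega_0)$ (which is the statement for $p=2$); and for $g \in L^p(\Omega_0) \cap L^2(\Omega_0)$ the log-convexity of the $L^q$-norms applied between $L^2$ and $L^{(p+p_0)/2}$ shows that $\seq{V_N g}{N \in \NN}$ is Cauchy in $L^p(\Omega_0)$, hence so is $\seq{V_N f}{N \in \NN}$ for arbitrary $f \in L^p(\Omega_0)$ by density, and since the exhaustion was arbitrary the convergence is unconditional. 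I do not expect a genuine obstacle: the only ingredients that are not a verbatim transcription of the proof of Theorem~\ref{th:6} are the replacement of the Calder\'on formula by \eqref{eq:48} and Theorem~\ref{th:4}, and the $\lell{p_0}{\infty}$ boundedness of $\seq{d_\lambda}{\lambda \in P}$, which relies on the full maximal estimate of Theorem~\ref{th:1} rather than on Doob's inequality. The point demanding the most care is keeping the three interpolation steps straight and verifying that the $\tilde{d}_\lambda$-factors are uniformly controlled in $L^p(\ell^2)$.
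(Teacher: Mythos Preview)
Your proposal is correct and matches the paper's own approach: the paper does not spell out a proof but simply states that ``the method of the proof of Theorem~\ref{th:6}, together with Theorem~\ref{th:4} and Theorem~\ref{th:2}'' yields the result, and you have carried out precisely that substitution, replacing the $D_\lambda$--reproducing formula by the $d_\lambda$--version coming from~\eqref{eq:48} and Theorem~\ref{th:4}, Theorem~\ref{th:5} by Theorem~\ref{th:2}, and the $L^{p_0}(\ell^\infty)$ control of $\seq{D_\lambda}{\lambda}$ by the corresponding control of $\seq{d_\lambda}{\lambda}$ via the maximal inequality of Theorem~\ref{th:1}. The details you provide (in particular the computation $d_\lambda\tilde d_\lambda=d_\lambda$ from~\eqref{eq:48} and the three-step interpolation) are all sound.
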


\appendix
\section{About $\Omega_0$ and Heisenberg group}
\label{ap:1}
	In some cases $\Omega_0$ can be identified with a Heisenberg group over a nonarchimedean local field. Let
	us recall, that $F$ is a \emph{nonarchimedean local field} if it is a topological field
	\footnote{A \emph{topological field} is an algebraic field with a topology making addition, multiplication and
	multiplicative inverse a continuous mappings.} that is locally compact, second
	countable, non-discrete and totally disconnected. Since $F$ together with the additive structure is a locally compact
	topological group it has a Haar measure $\mu$ that is unique up to multiplicative constant. Observe that for each
	$x \in F$, the measure $\mu_x(B) =  \mu(x B)$ is also a Haar measure. We set
	\[
		\abs{x} = \frac{\mu_x(B)}{\mu(B)},
	\]
	where $B$ is any measurable set with finite and positive measure.
	By $\calO = \{x \in F : \abs{x} \leq 1\}$, we denote the ring of integers in $F$. We fix $\pi \in \mathfrak{p} -
	\mathfrak{p}^2$, where
	\[
		\mathfrak{p} = \big\{x \in F : \abs{x} < 1\big\}.
	\]
	We are going to sketch the construction of a building associated to $\GL(3, F)$. For more details we refer
	to \cite{st-b}. A lattice is a subset $L \subset F^3$ of the form
	\[
		L = \calO v_1 + \calO v_2 + \calO v_3,
	\]
	where $\{v_1, v_2, v_3\}$ is a basis of $F^3$.
	We say that two lattices $L_1$ and $L_2$ are equivalent if and only if $L_1 = a L_2$ for some nonzero
	$a \in F$. Then $\mathscr{X}$, the building of $\GL(3, F)$, is the set of equivalence
	classes of lattices in $F^3$. For $x, y \in \mathscr{X}$ there are a basis $\{v_1, v_2, v_3\}$
	of $F^3$ and integers $j_1 \leq j_2 \leq j_3$ such that (see \cite[Proposition 3.1]{st-b})
	\[
		x = \calO v_1 + \calO v_2 + \calO v_3, \quad\text{and}\quad
		y = \pi^{j_1} \calO v_1 + \pi^{j_2} \calO v_2 + \pi^{j_3} \calO v_3.
	\]
	We say that $x$ and $y$ are joined by an edge if and only if $0 = j_1 \leq j_2 \leq j_3 = 1$. The subset
	\[
		\mathscr{A} =
		\big\{\pi^{j_1} \calO v_1 + \pi^{j_2} \calO v_2 + \pi^{j_3} \calO v_3 : j_1, j_2, j_3 \in \ZZ \big\}
	\]
	is called an apartment. A sector in $\mathscr{A}$ is a subset of the form
	\[
		\calS = \big\{x + \pi^{j_1} \calO v_1 + \pi^{j_2} \calO v_2 + \pi^{j_3} \calO v_3 : 
		j_{\sigma(1)}\leq j_{\sigma(2)} \leq j_{\sigma(3)}, j_1, j_2, j_3 \in \ZZ \big\},
	\]
	where $\sigma$ is a permutation of $\{1, 2, 3\}$ and $x \in \mathscr{A}$. Thus, a subsector of $\calS$ is
    \[
        \big\{x + \pi^{k_1+j_1} \calO v_1 + \pi^{k_2+j_2} \calO v_2 + \pi^{k_3+ j_3} \calO v_3 : j_{\sigma(1)} 
		\leq j_{\sigma(2)} \leq j_{\sigma(3)},
        j_1, j_2, j_3 \in \ZZ\},
    \]
	for some $0 \leq k_{\sigma(1)} \leq k_{\sigma(2)} \leq k_{\sigma(3)}$. Finally, two sectors
	\[
		\calS = \big\{x + \pi^{j_1} \calO v_1 + \pi^{j_2} \calO v_2 + \pi^{j_3} \calO v_3 :
        j_{\sigma(1)}\leq j_{\sigma(2)} \leq j_{\sigma(3)}, j_1, j_2, j_3 \in \ZZ \big\},
	\]
	and
	\[
		\calS' = \big\{x' + \pi^{j_1} \calO v_1 + \pi^{j_2} \calO v_2 + \pi^{j_3} \calO v_3 :
        j_{\sigma'(1)}\leq j_{\sigma'(2)} \leq j_{\sigma'(3)}, j_1, j_2, j_3 \in \ZZ \big\},
	\]
	are opposite if $\sigma' \circ \sigma^{-1} = (3\, 2\, 1)$.

	A sector in $\mathscr{X}$ is a sector in one of its apartments. Two sectors in $\mathscr{X}$ are equivalent
	if and only if its intersection contains a sector. By $\Omega$ we denote the equivalence classes of sectors in 
	$\mathscr{X}$. Let $\omega_0$ and $\omega_0'$ be the equivalence class of
	\[
		\mathscr{S}_0 = \big\{\pi^{j_1} \calO e_1 + \pi^{j_2} \calO e_2 + \pi^{j_3} \calO e_3 :
        j_1 \leq j_2 \leq j_3, j_1, j_2, j_3 \in \ZZ \big\},
	\]
	and
	\[
		\mathscr{S}_0' = \big\{\pi^{j_1} \calO e_1 + \pi^{j_2} \calO e_2 + \pi^{j_3} \calO e_3 :
        j_1 \geq j_2 \geq j_3, j_1, j_2, j_3 \in \ZZ \big\},
	\]
	respectively. Two sectors $\mathscr{S}$ and $\mathscr{S}'$ are opposite in $\mathscr{X}$ if there are subsectors of
	$\mathscr{S}$ and $\mathscr{S}'$ opposite in a common apartment. By $\Omega_0$ we denote the equivalence classes
	of sectors opposite to $\mathscr{S}_0$. 

	Suppose that $\omega' \in \Omega_0$. Let $\{v_1, v_2, v_3\}$ be a basis of $F^3$, and $k_1 \leq k_2 \leq k_3$ and
	$k_1' \geq k_2' \geq k_3'$ be integers such that
	\begin{equation}
		\label{eq:a1}
		\big\{\pi^{j_1 + k_1} \calO v_1 + \pi^{j_2 + k_2} \calO v_2 + \pi^{j_3 + k_3} \calO v_3 : 
		j_1 \leq j_2 \leq j_3, j_1, j_2, j_3 \in \ZZ\big\},
	\end{equation}
	and
	\begin{equation}
		\label{eq:a2}
		\big\{\pi^{j_1 + k_1'} \calO v_1 + \pi^{j_2 + k_2'} \calO v_2 + \pi^{j_3 + k_3'} \calO v_3 :
        j_1 \geq j_2 \geq j_3, j_1, j_2, j_3 \in \ZZ\big\},
	\end{equation}
	belong to $\omega_0$ and $\omega'$, respectively. Since the sector \eqref{eq:a1} belongs to $\omega_0$, we have
	\[
		v_1 = b_{11} e_1, \quad
		v_2 = b_{21} e_1 + b_{22} e_2, \quad
		v_3 = b_{31} e_1 + b_{32} e_2 + b_{33} e_3,
	\]
	for some $b_{ij} \in F$ such that $b_{11}, b_{22}, b_{33} \neq 0$. Hence, the matrix
	\[
		g = 
		\begin{pmatrix}
			b_{11} & b_{21} & b_{31} \\
			0 & b_{22} & b_{32} \\
			0 & 0 & b_{33}
		\end{pmatrix},
	\]
	satisfies $g e_j = v_j$. In particular, $g\omega'_0 = \omega'$. Therefore, the group of upper triangular matrices acts
	transitively on $\Omega_0$. Observe also that the stabilizer of $\omega_0'$ in $\GL(3, F)$ is
	the group of lower triangular matrices. Thus the group
	\[
		\left\{
		\begin{pmatrix}
		1 & x & z \\
		0 & 1 & y \\
		0 & 0 & 1
		\end{pmatrix} :
		x, y, z \in F
		\right\}
	\]
	acts simply transitively on $\Omega_0$.


\begin{thebibliography}{10}
\bibitem{bour}
N.~Bourbaki.
\newblock {\em {G}roupes et alg\`ebres de {L}ie}.
\newblock Hermann, Paris, 1968.

\bibitem{burk1}
D.~Burkholder.
\newblock Martingale transforms.
\newblock {\em Ann. Stat.}, 37:1494--1504, 1966.

\bibitem{burk2}
D.~Burkholder.
\newblock Distribution function inequalities for martingales.
\newblock {\em Ann. Probab.}, 1:19--42, 1973.

\bibitem{burk3}
D.~Burkholder.
\newblock The best constant in the {D}avis inequality for the expectation of
  the martingale square function.
\newblock {\em T. Am. Math. Soc.}, 354(1):91--105, 2001.

\bibitem{carwal}
R.~Cairoli and J.~Walsh.
\newblock Stochastic integrals in the plane.
\newblock {\em Acta Math.-Djursholm}, 134(1):111--183, 1975.

\bibitem{DuoRdF}
J.~Duoandikoetxea and J.~Rubio~de Francia.
\newblock Maximal and singular integral operators via {F}ourier transform
  estimates.
\newblock {\em Invent. Math.}, 84(3):541--561, 1986.

\bibitem{gundy2}
R.~Gundy.
\newblock A decomposition of {$L^1$} bounded martingales.
\newblock {\em Ann. Math. Statist.}, 39:134--138, 1968.

\bibitem{hl}
G.~Hardy and J.~Littlewood.
\newblock A maximal theorem with function-theoretic applications.
\newblock {\em Acta Math.}, 54:81--161, 1930.

\bibitem{jmz}
J.~Jensen, J.~Marcinkiewicz, and A.~Zygmund.
\newblock Note on the differentiability of multiple integrals.
\newblock {\em Fund. Math.}, 25:217--234, 1935.

\bibitem{mar}
J.~Marcinkiewicz.
\newblock Quelques th\'eor\`emes sur les s\'eris orthogonales.
\newblock {\em Ann. Soc. Polon. Math.}, 16:84--96, 1937.

\bibitem{mat}
C.~Metraux.
\newblock Quelques in\'egalit\'es pour martingales \`a parameter bidimensional.
\newblock In C.~Dellacherie, P.~A. Meyer, and M.~Weil, editors, {\em
  S\'eminaire de {P}robabilit\'es {XII}}, volume 649 of {\em Lect. Notes
  Math.}, pages 170--179. Springer--Verlag, 1978.

\bibitem{pal}
R.~Paley.
\newblock A remarkable series series of orthogonal functions {(I)}.
\newblock {\em P. Lond. Math. Soc.}, 34:241--279, 1932.

\bibitem{ron}
M.~Ronan.
\newblock {\em {L}ectures on {B}uildings}.
\newblock Perspectives in Mathematics. Academic Press, 1989.

\bibitem{st-b}
T.~Steger.
\newblock {{L}ocal {F}ields and {B}uildings}.
\newblock In A.~Koranyi, editor, {\em Harmonic Functions on Trees and
  Buildings}, volume 206 of {\em Contemp. Math.}, pages 79--107. Amer. Math.
  Soc., 1997.

\bibitem{stein1}
E.~Stein.
\newblock {\em {T}opics in {H}armonic {A}nalysis {R}elated to the
  {L}ittlewood--{P}aley {T}heory}.
\newblock Annals of Mathematics Studies. Princeton University Press, 1970.

\bibitem{steinone}
E.~Stein and T.~Murphy.
\newblock {\em {H}armonic {A}nalysis: {R}eal-{V}ariable {M}ethods,
  {O}rthogonality, and {O}scillatory {I}ntegrals}.
\newblock Princeton Mathematical Series. Princeton University Press, 1993.

\bibitem{stst}
E.~Stein and B.~Street.
\newblock Multi-parameter singular {R}adon transforms {II}: the {$L^p$} theory.
\newblock {\em Adv. Math.}, 248:736--783, 2013.

\bibitem{weisz}
F.~Weisz.
\newblock {\em Martingale {H}ardy spaces and their applications in {F}ourier
  analysis}, volume 1568 of {\em Lect. Notes Math.}
\newblock Springer--Verlag, Berlin, 1996.

\bibitem{wongzak}
E.~Wong and M.~Zakai.
\newblock Martingales and stochastic integrals for processes with a
  multidimensional parameter.
\newblock {\em Z. Wahrscheinlichkeitstheorie verw. Gebiete}, 29:109--122, 1974.

\bibitem{zak}
M.~Zakai.
\newblock Some classes of two-parameter martingales.
\newblock {\em Ann. Prob.}, 9(2):255--265, 1981.

\end{thebibliography}
\end{document}